\documentclass{scrartcl}

\usepackage[utf8x]{inputenc}

\usepackage{amsthm}
\usepackage{amsmath}
\usepackage{mathtools}
\usepackage{amsfonts}
\usepackage[all]{xy}
\SelectTips{cm}{}

\usepackage{graphicx}
\usepackage[dvipsnames]{xcolor}
\usepackage{caption}

\usepackage{tikz}
\usetikzlibrary{calc}
\usetikzlibrary{cd} 

\usepackage[pagebackref,
    ,pdfborder={0 0 0}
  ,urlcolor=black,hypertexnames=false,
pdftitle={L2-Betti numbers and computability of reals}]{hyperref}

\usepackage{scrlayer-scrpage}

\automark[section]{section}
\usepackage{ifthen}
\deftriplepagestyle{myheadings}
             {\ifthenelse{\isodd{\value{page}}}{\leftmark}{\pagemark}}
             {}
             {\ifthenelse{\isodd{\value{page}}}{\pagemark}{\rightmark}}%
             {}{}{}%
\pagestyle{myheadings}

\newtheoremstyle{ggt}{}{}{\itshape}{}{\sffamily\bfseries}{.}{ }{}
\newtheoremstyle{ggtdefinition}{}{}{}{}{\sffamily\bfseries}{.}{ }{}

\theoremstyle{ggt}
\newtheorem{thm}{Theorem}[section]
\newtheorem{lemma}[thm]{Lemma}
\newtheorem{cor}[thm]{Corollary}
\newtheorem{prop}[thm]{Proposition}

\theoremstyle{ggtdefinition}
\newtheorem{defi}[thm]{Definition}
\newtheorem{example}[thm]{Example}
\newtheorem{question}[thm]{Question}

\newtheorem{remark}[thm]{Remark}
\newtheorem*{ack}{Acknowledgements}

\DeclareMathOperator*{\colim}{colim}
\DeclareMathOperator{\leanop}{\mathsf{Lean}}
\def\lean{\ensuremath{\leanop}}

\newcommand{\leaninline}[1]{\lstinline!#1!}

\DeclareMathOperator{\tr}{tr}
\newcommand{\trC}[1]{\ensuremath{\tr_{\C #1}}}
\newcommand{\trCG}{\ensuremath{\trC{G}}}

\newcommand{\RC}{\ensuremath{\mathrm{RC}}}
\newcommand{\LC}{\ensuremath{\mathrm{LC}}}
\newcommand{\EC}{\ensuremath{\mathrm{EC}}}
  	 
\def\N{\mathbb{N}}
\def\Z{\mathbb{Z}}
\def\Q{\mathbb{Q}}
\def\R{\mathbb{R}}
\def\C{\mathbb{C}}

\DeclareMathOperator{\map}{map}
\DeclareMathOperator{\im}{im}

\def\args{\,\cdot\,}
\def\exi#1{\exists_{#1}\quad}
\def\fa#1{\forall_{#1}\quad}
\def\actson{\curvearrowright}

\def\vN#1{\mathcal{R}#1}
\def\ltb#1{b^{(2)}_{#1}}
\def\ltm#1#2{b^{(2)}(#1;#2)}
\def\ltt{\varrho^{(2)}}
\def\ltch#1#2{C^{(2)}_{#1}(#2)}

\def\ltmul#1{R_{#1}^{(2)}} 

\DeclareMathOperator{\Wr}{Wr} 
\newcommand{\ZPinv}{\ensuremath{\Z[P^{-1}]}} 

\def\genrel#1#2{%
  \langle #1 \mid #2\rangle}

\def\qand{\quad\text{and}\quad}
\usepackage{amssymb}

\usepackage{caption}

\makeatletter
\def\blfootnote{\xdef\@thefnmark{}\@footnotetext}
\makeatother
\def\draftinfo{}
\date{\today%
  \protect\blfootnote{\copyright{\ C.~L\"oh, M.~Uschold~2022}. 
    This work was supported by the CRC~1085 \emph{Higher Invariants} 
    (Universit\"at Regensburg, funded by the DFG) and is partially
    based on MU's MSc project. 
    \\
    Keywords: $L^2$-invariants of groups, computability of reals, proof assistants
    \\
    MSC~2020 classification: 03D78, 20F10, 20C07, 20J06, 68V20
    \draftinfo}}


\allowdisplaybreaks[1]
  
\begin{document}

\title{$L^2$-Betti numbers and computability of reals}
\author{Clara L\"oh and Matthias Uschold}

\maketitle

\thispagestyle{empty}

\begin{abstract}
  We study the computability degree of real numbers
  arising as $L^2$-Betti numbers or $L^2$-torsion of
  groups, parametrised over the Turing degree of the
  word problem.
\end{abstract}

\section{Introduction}

A real number~$r$ is \emph{computable} if there exists a computable
sequence of rational numbers that converges to~$r$ in a computably
controlled way. Similarly, one obtains notions of right- and
left-computability, as well as versions that are parametrised over
Turing degrees (Section~\ref{subsec:comp}).

Several real-valued invariants from group theory and geometric
topology are known to lead to values with a computable structure.
In particular, such results give rise to corresponding non-realisability
results: sufficiently non-computable values cannot occur.

For example, for each recursively presented group, all values of
stable commutator length are right-computable; conversely, every
right-computable real can be realised as the stable commutator length of
some recursively presented group~\cite{heuer_sclrc}.  Another example
is that the simplicial volume of each oriented closed connected
manifold is a right-computable real number~\cite{heuerloeh_trans}. The
intrinsic computable structure of values also appears in other
places: for instance, the set of mapping degrees between two oriented
closed connected manifolds is recursively enumerable, which leads to
examples of sets of integers that cannot be realised as sets
of mapping degrees (Appendix~\ref{appx:deg}).

In the present article, we focus on the values of $L^2$-Betti numbers
and $L^2$-torsion arising from groups. $L^2$-Betti numbers can be described as
limits of characteristic sequences, whose elements are traces of
powers of matrices over the group ring
(Section~\ref{subsec:charseq}). Computing traces of such powers
involves determining specific coefficients of elements in the group
ring, and thus requires solving the word problem.

\subsection{Main results}

A straightforward spectral estimate shows that the right-computability
degree of $L^2$-Betti numbers is bounded from above by the Turing
degree of the word problem of the underlying group.

\begin{thm}[Theorem~\ref{thm:RC}]\label{thm:main:RC}
  Let $G$ be a finitely generated group with word problem of degree at
  most~$\mathbf{a}$. Moreover, let $m,n\in \N$ and $A\in M_{m\times
    n}(\Z G)$. Then, the $L^2$-Betti number~$\dim_{\vN G} \ker
  (\ltmul{A})$ is $\mathbf{a}$-right-computable.
\end{thm}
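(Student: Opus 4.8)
The plan is to use the standard description of the $L^2$-Betti number as a limit of the "characteristic sequence" of traces. Recall that $\dim_{\vN G}\ker(\ltmul{A}) = m - \dim_{\vN G}\overline{\operatorname{im}(\ltmul{A})}$, and $\dim_{\vN G}\overline{\operatorname{im}(\ltmul{A})}$ equals the von Neumann trace of the orthogonal projection onto that closure. Writing $B = A^* A \in M_{m\times m}(\Z G)$ (a positive self-adjoint operator on $\ell^2(G)^m$ with $\|B\| \le C$ for some explicit integer bound $C$ depending only on the entries of $A$), one has the classical formula
\begin{equation*}
  \dim_{\vN G}\ker(\ltmul{A}) = \lim_{k\to\infty} \trCG\bigl( (1 - C^{-1} B)^k \bigr),
\end{equation*}
where the sequence on the right is monotonically decreasing in $k$. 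So the number we care about is a decreasing limit of the rationals $a_k := \trCG((1 - C^{-1}B)^k)$, which is exactly the shape needed for right-computability — provided the $a_k$ can be computed from an oracle of degree $\mathbf{a}$.

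**The computational step.**

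First I would fix generators of $G$ and make the word problem oracle explicit: degree $\le \mathbf{a}$ means we can $\mathbf{a}$-decide, for any word $w$, whether $w =_G 1$. Next, the matrix $1 - C^{-1}B$ has entries in $\Q G$; its $k$-th power has entries that are $\Q$-linear combinations of products of the original entries, hence supported on a finite, effectively enumerable set of group elements (words in the generators), with effectively computable rational coefficients — all of this is purely formal manipulation in the group ring, needing no oracle. To evaluate $\trCG$ of the resulting matrix we must sum the coefficients of the identity element along the diagonal; and to read off "the coefficient of $1$" from our formal expression $\sum_j q_j \cdot w_j$ we need to know which of the finitely many words $w_j$ represent $1$ in $G$ and group together their coefficients. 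This is exactly where the word problem oracle enters, and one invocation per term suffices. Thus $k \mapsto a_k$ is $\mathbf{a}$-computable as a sequence of rationals.

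**Assembling right-computability.**

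Finally I would combine the two ingredients. The sequence $(a_k)_k$ is $\mathbf{a}$-computable and, by positivity of $B$ and $0 \le C^{-1}B \le 1$, monotonically non-increasing with limit $\dim_{\vN G}\ker(\ltmul{A})$. A monotonically non-increasing $\mathbf{a}$-computable sequence of rationals converging to $r$ witnesses precisely that $r$ is $\mathbf{a}$-right-computable (this is the definition from Section~\ref{subsec:comp}, possibly after noting that one does not even need a modulus of convergence for the right-computable notion — the monotone approximation from above is enough). This completes the argument.

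**Main obstacle.**

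The spectral/analytic facts (the trace formula for the kernel dimension, monotonicity, the norm bound $C$) are routine and essentially contained in the characteristic-sequence material of Section~\ref{subsec:charseq}; the only genuinely delicate point is the bookkeeping showing that extracting the identity-coefficient of a product of group-ring elements reduces to finitely many word-problem queries uniformly in $k$, and that the whole procedure is therefore $\mathbf{a}$-computable rather than merely $\mathbf{a}$-computably-enumerable. Getting the quantifier structure of "$\mathbf{a}$-computable sequence of rationals" right — as opposed to something weaker — is the heart of the proof.
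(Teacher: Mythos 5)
Your argument follows the paper's proof essentially verbatim: compute an explicit rational bound on the operator norm from the $\ell^1$-size of the entries, form the characteristic sequence $\trCG\bigl((1-K^{-2}AA^*)^p\bigr)$, observe it is monotonically decreasing to $\ltm AG$, and note that each term is $\mathbf{a}$-computable because evaluating $\trCG$ on a matrix over $\Q[S^*]$ reduces to finitely many word-problem queries (this last reduction is the paper's Proposition~\ref{prop:word-problem-vs-trace} together with Lemma~\ref{lem:dimkeralgo}). One small slip: you write $B = A^*A \in M_{m\times m}(\Z G)$, but for $A \in M_{m\times n}$ the matrix $A^*A$ is $n\times n$ and governs $\ker(\ltmul{A^*})$; you want $B = AA^*$, which is $m\times m$ and satisfies $\ker(\ltmul{AA^*}) = \ker(\ltmul{A})$ — as your stated size and the target kernel both indicate you intended.
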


Left-computability requires additional control on the spectrum
(Theorem~\ref{thm:LC}).

\begin{thm}[Theorem~\ref{thm:L2-computable-det-class}]\label{thm:main:L2-computable-det-class}
  Let $\mathbf a$ be a Turing degree. There is an algorithm of Turing
  degree~$\mathbf a$ that, 
  \begin{itemize}
  	\item given a finitely generated group~$G$, given by a finite generating
  	set~$S$,
  	\item an algorithm of Turing degree~$\mathbf a$, solving the word
  	problem of~$G$,
  	\item a matrix $A\in M_{n\times n}(\Z S^*)$ whose image in 
  	$M_{n\times n}(\Z G)$ is of determinant class, and
  	\item a rational number, testifying that~$A$ is of 
  	  determinant class (see Lemma~\ref{lemma:certify-DCC}),
        \item
          computes a sequence $\N \to \Q$ effectively converging to~$\dim_{\vN G} \ker (\ltmul{A})$.
  \end{itemize}
  In particular, $\dim_{\vN G} \ker (\ltmul{A})$ is $\mathbf{a}$-computable.
\end{thm}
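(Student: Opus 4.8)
The plan is to realise $\dim_{\vN G}\ker(\ltmul A)$ as the limit of the characteristic trace sequence already underlying Theorem~\ref{thm:main:RC}, and then to turn the determinant-class certificate into an explicit, computable modulus of convergence for that sequence; the sequence together with its modulus is the desired effectively convergent approximation, whence $\mathbf a$-computability. In detail: from $A\in M_{n\times n}(\Z S^{*})$ form the Hermitian square $B:=A^{*}A$ over $\Z S^{*}$ (or $AA^{*}$, according to the convention used for $R^{(2)}$), so that the induced operator satisfies $\ltmul B=(\ltmul A)^{*}\,\ltmul A$ and hence $\ker(\ltmul B)=\ker(\ltmul A)$. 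From the coefficients of $A$ read off a rational $K\ge\max\{1,\|\ltmul B\|\}$ — for instance the square of the sum, over all entries $A_{ij}$, of the sums of the absolute values of the coefficients of $A_{ij}$ — and put $c:=1/K\in\Q_{>0}$, so that $0\le c\,\ltmul B\le\mathrm{id}$ in $\vN G$. By functional calculus $\bigl((1-c\,\ltmul B)^{k}\bigr)_{k\in\N}$ is a decreasing sequence of positive operators converging strongly to the projection onto $\ker(\ltmul B)$, so by normality of $\tr_{\vN G}$
\[
a_{k}:=\tr_{\vN G}\bigl((1-c\,\ltmul B)^{k}\bigr)\ \downarrow\ \dim_{\vN G}\ker(\ltmul A)=:b .
\]
Each $a_{k}$ is a rational number that is computable relative to the supplied word-problem algorithm: expand the matrix $(1-cB)^{k}$ over $\Q S^{*}$ in finitely many ring operations, restrict to the diagonal, and for each of the finitely many words occurring ask the word problem whether it represents the identity of $G$, summing the corresponding coefficients. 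This is precisely the computation behind Theorem~\ref{thm:main:RC}, which already yields $\mathbf a$-right-computability of $b$.

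For the modulus, let $F\colon\R_{>0}\to\R_{\ge0}$, $F(\lambda):=\tr_{\vN G}\bigl(\chi_{[0,\lambda]}(\ltmul B)\bigr)$, be the spectral density function of $\ltmul B$, so that $b=\lim_{\lambda\to0^{+}}F(\lambda)=:F(0)$ and
\[
0\le a_{k}-b=\int_{(0,\,\|\ltmul B\|]}(1-c\lambda)^{k}\,dF(\lambda).
\]
By Lemma~\ref{lemma:certify-DCC} the rational certificate provides a computable bound of the form $\int_{(0,\,\|\ltmul B\|]}\ln\lambda\,dF(\lambda)\ge -C$ with $C\in\Q$ (up to whatever normalisation that lemma actually uses, which the final proof will match). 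Splitting this integral at $\lambda=1$ and using $0\le F\le n$ yields a computable $C'\in\Q_{>0}$ with $F(\lambda)-F(0)\le C'/\ln(1/\lambda)$ for all $\lambda\in(0,1)$. Cutting the integral for $a_{k}-b$ at a parameter $\delta\in(0,1)$, bounding $(1-c\lambda)^{k}\le 1$ on $(0,\delta]$ and $0\le 1-c\lambda< 1-c\delta$ on $(\delta,\|\ltmul B\|]$ against the total mass $n$, gives
\[
0\le a_{k}-b\le\bigl(F(\delta)-F(0)\bigr)+n\,(1-c\delta)^{k}\le\frac{C'}{\ln(1/\delta)}+n\,e^{-c\delta k}.
\]
Given $\eta>0$, the choice $\delta:=e^{-2C'/\eta}\in(0,1)$ makes the first summand equal $\eta/2$ and forces the second below $\eta/2$ once $k\ge N(\eta):=\bigl\lceil c^{-1}e^{2C'/\eta}\ln(2n/\eta)\bigr\rceil$ (enlarged, if need be, to be at least $1$). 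Since $N$ is computable from $C'$, $c$ and $n$ alone, the sequence $(a_{k})_{k}$ converges to $b$ with modulus $N$; equivalently, $q_{j}:=a_{N(2^{-j})}$ satisfies $|q_{j}-b|\le 2^{-j}$. Outputting this sequence — rational, and computable relative to the degree-$\mathbf a$ word problem — furnishes an effectively convergent approximation to $\dim_{\vN G}\ker(\ltmul A)$, which is therefore $\mathbf a$-computable.

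The two routine ingredients — the rational norm bound $K$ and the computability of the traces $a_{k}$ from the word problem (the content of Theorem~\ref{thm:main:RC}) — are immediate; the substance is the second paragraph, i.e.\ converting a single rational determinant-class certificate into the explicit modulus $N$, which rests on the elementary spectral estimate $F(\lambda)-F(0)=O\bigl(1/\ln(1/\lambda)\bigr)$ near $0$. This is exactly the ``additional control on the spectrum'' announced before Theorem~\ref{thm:LC}, and the left-computability half could alternatively be extracted from the apparatus behind Theorem~\ref{thm:LC}. What remains is the bookkeeping I expect to be the only fiddly part: pinning down the precise normalisation of the certificate supplied by Lemma~\ref{lemma:certify-DCC} (whether it bounds $\int\ln\lambda\,dF$ for $\ltmul B$, for $c\,\ltmul B$, or for a Fuglede--Kadison determinant, and how $\|\ltmul B\|$ enters), and checking that every quantity fed into $N$ and every word-problem query is produced uniformly from the input data $(G,S,\text{word-problem algorithm},A,\text{certificate})$, so that the whole procedure really is a single algorithm of Turing degree $\mathbf a$.
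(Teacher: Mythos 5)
Your argument is correct and is essentially the paper's: both proofs derive from the determinant-class certificate the logarithmic spectral bound $\mu_A\bigl((0,\lambda)\bigr)=O\bigl(1/\log(1/\lambda)\bigr)$ and then control the tail of the characteristic sequence by splitting the integral $\int(1-c\lambda)^{k}\,d\mu_A$ at a small cutoff. The only real difference is packaging: you turn the estimate into an explicit modulus of convergence $N(\eta)$ for the single decreasing sequence $(a_k)$, whereas the paper instead constructs a computable sequence $(\epsilon_k)$ with $\mu_A\bigl((0,1/k)\bigr)\le\epsilon_k$, feeds it into the $3\Rightarrow1$ direction of Theorem~\ref{thm:LC} to produce an increasing lower approximation, and then invokes $\EC_{\mathbf a}=\LC_{\mathbf a}\cap\RC_{\mathbf a}$ (Corollary~\ref{cor:EC-vs-LC-RC}) together with Theorem~\ref{thm:RC}; your version is a touch more self-contained since it never needs the right-computable side separately. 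Your closing caveat about matching the normalisation in Lemma~\ref{lemma:certify-DCC} (whether $q$ bounds $\int_{0^+}^1\log x\,d\mu_A$ for $A$ itself or for $AA^*$) is indeed the one small point to fix, and the paper handles it by tacitly taking $A$ self-adjoint, exactly as you anticipated.
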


In the presence of positive Novikov--Shubin invariants, 
also the values of $L^2$-torsion are computable reals:

\begin{thm}[Theorem~\ref{thm:torsion}]\label{thm:main:torsion}
  Let $G$ be a finitely generated group with word problem
  of Turing degree at most~$\mathbf{a}$. Let $X$ be a finite free $G$-CW-complex
  all of whose $L^2$-Betti numbers are zero and all of whose
  Novikov--Shubin invariants are positive. Then the
  $L^2$-torsion~$\ltt(G \actson X)$ is $\mathbf{a}$-computable. 
\end{thm}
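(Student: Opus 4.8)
The plan is to reduce the $L^2$-torsion to Fuglede--Kadison determinants of the combinatorial Laplacians of $\ltch * X$, to rewrite each such determinant as a spectral integral $\int_{0^+}^K \ln\lambda\,dF(\lambda)$, and then to split $\ln\lambda$ into a bounded part -- accessible through traces of polynomials in the Laplacian, hence computable from the word problem exactly as in Theorem~\ref{thm:main:RC} -- and a part supported near~$0$, which is negligible by positivity of the Novikov--Shubin invariants. First I would set up the Laplacians: since $X$ is a finite free $G$-CW-complex, $\ltch * X$ consists of Hilbert $\vN G$-modules $\ltch p X \cong \ell^2(G)^{n_p}$ with differentials given by right multiplication by matrices over~$\Z G$, so the $p$-th combinatorial Laplacian is an explicit positive self-adjoint matrix $\Delta_p \in M_{n_p\times n_p}(\Z G)$ with $\dim_{\vN G}\ker(\ltmul{\Delta_p}) = \ltb p(X) = 0$. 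Positivity of all Novikov--Shubin invariants forces $\ltch * X$ to be of determinant class (this falls out of the decay estimate in the third paragraph together with integration by parts), so $\ltt(G\actson X)$ is defined and equals a fixed finite $\Q$-linear combination $\sum_p c_p\,\ln\det_{\vN G}(\Delta_p)$ of the logarithms of the Fuglede--Kadison determinants of the~$\Delta_p$; it thus suffices to prove that each $\ln\det_{\vN G}(\Delta_p)$ is $\mathbf a$-computable.

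Write $F_p(\lambda) = \dim_{\vN G}\bigl(\chi_{[0,\lambda]}(\ltmul{\Delta_p})\bigr)$ for the spectral density function of~$\Delta_p$ and read off a rational $K \ge \|\ltmul{\Delta_p}\|$ from the $\ell^1$-norms of the entries of~$\Delta_p$. Then $F_p$ lives on $[0,K]$, $F_p(K) = n_p$, and, crucially, $F_p(0) = \ltb p(X) = 0$, so $F_p$ has no atom at~$0$ and $\ln\det_{\vN G}(\Delta_p) = \int_{0^+}^K \ln\lambda\,dF_p(\lambda)$. For any polynomial~$q$ with rational coefficients one has $\int_0^K q\,dF_p = \trCG\bigl(q(\Delta_p)\bigr)$, this being the trace of $q(\Delta_p)$ regarded as a matrix over the group ring, and this last number is an \emph{exact} rational obtained from finitely many word-problem queries: form $q(\Delta_p)$ symbolically over~$\Z G$ and read off, along the diagonal, the coefficient of the neutral element. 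This is precisely the computation underlying Theorem~\ref{thm:main:RC}, so $\trCG(q(\Delta_p))$ is $\mathbf a$-computable.

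Now fix -- once and for all, which is legitimate because $G$ and~$X$ are fixed -- constants $\alpha>0$, $C>0$ and $\epsilon_0\in(0,K]$ with $F_p(\lambda)\le C\lambda^{\alpha}$ for all $\lambda\in(0,\epsilon_0]$ and all~$p$; such a triple exists exactly because every Novikov--Shubin invariant of~$X$ is positive. Given a tolerance $\eta>0$, pick a rational $\epsilon\in(0,\epsilon_0]$ and decompose $\ln\lambda = g(\lambda)+h(\lambda)$ on $(0,K]$, where $g(\lambda)=\max(\ln\lambda,\ln\epsilon)$ is bounded and continuous on $[0,K]$ and $h(\lambda)=\min(\ln(\lambda/\epsilon),0)$ is supported on $(0,\epsilon)$. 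Integration by parts, whose boundary term at~$0$ vanishes by the decay estimate, yields $\bigl|\int_{0^+}^{\epsilon}h\,dF_p\bigr| \le \int_0^{\epsilon}F_p(\lambda)/\lambda\,d\lambda \le C\epsilon^{\alpha}/\alpha$, which is $\le\eta/2$ once $\epsilon$ is chosen small enough -- a computable condition in $C,\alpha,\eta$. Then, using a constructive version of the Weierstrass approximation theorem, effectively find a polynomial~$q$ with rational coefficients such that $\|q-g\|_{\infty,[0,K]}\le\delta$; since $\int_0^K g\,dF_p$ is the von Neumann trace of $g(\Delta_p)$ and $\dim_{\vN G}\ltch p X = n_p$, this gives $\bigl|\trCG(q(\Delta_p)) - \int_0^K g\,dF_p\bigr| \le \delta\, n_p$, which is $\le\eta/2$ for $\delta$ small. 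As $\int_{0^+}^K\ln\lambda\,dF_p = \int_0^K g\,dF_p + \int_{0^+}^{\epsilon}h\,dF_p$, combining the two estimates gives $\bigl|\ln\det_{\vN G}(\Delta_p) - \trCG(q(\Delta_p))\bigr| \le C\epsilon^{\alpha}/\alpha + \delta\, n_p \le \eta$, with a right-hand side we can drive to~$0$. Since each $\trCG(q(\Delta_p))$ is an $\mathbf a$-computable rational, this exhibits $\ln\det_{\vN G}(\Delta_p)$, and hence the $\Q$-combination $\ltt(G\actson X)$, as $\mathbf a$-computable.

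The genuinely new difficulty is not the trace/polynomial calculus -- that is Theorem~\ref{thm:main:RC} -- but the bottom of the spectrum: $\ln\lambda$ blows up as $\lambda\to 0^+$, so it cannot be approximated by a single polynomial on all of $[0,K]$, and the Novikov--Shubin invariant of~$X$ is not known to be a computable real. The truncation $\ln\lambda = g+h$ addresses both at once -- $g$ is bounded, hence trace-accessible, while $h$ lives where $F_p$ is negligible -- and the point is that for the fixed complex~$X$ the positivity hypothesis only needs to \emph{guarantee the existence} of an admissible triple $(\alpha,C,\epsilon_0)$, which the (necessarily non-uniform) algorithm may hard-code; the error term $C\epsilon^{\alpha}/\alpha$ is then a bona fide computable bound. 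What remains is routine: the operator-norm bound from $\ell^1$-norms, the vanishing of the integration-by-parts boundary term at~$0$, the standard combinatorial formula expressing $\ltt$ through the $\ln\det_{\vN G}(\Delta_p)$, and the determinant-class conclusion.
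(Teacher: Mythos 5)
Your overall strategy coincides with the paper's: reduce $\ltt(G\actson X)$ to the $\Q$-linear combination of the $\ln\det_{\vN G}(\Delta_p)$, observe that $\mathbf a$-computability is closed under rational linear combinations, and then use positivity of the Novikov--Shubin invariants to obtain a computable rate of convergence near the bottom of the spectrum. The technical realisation, though, is genuinely different. The paper does not redo the spectral analysis; it quotes L\"uck's estimate \cite[Theorem~3.172(5)]{lueck_l2}, which says that for det-$L^2$-acyclic $\Delta_p$ with Novikov--Shubin exponent bounded below by some rational $\alpha>0$, the partial sums $c_p\cdot\ln M - \tfrac12\sum_{k=1}^K \tfrac1k\, c(\Delta_p,M)_k$ of the characteristic sequence converge to $\ln\det(R^{(2)}_{\Delta_p})$ with explicit error $O(K^{-\alpha})$. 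Because the characteristic sequence is $\mathbf a$-computable by Proposition~\ref{prop:word-problem-vs-trace}, and the error sequence is computable, $\mathbf a$-computability follows immediately from Proposition~\ref{prop:char-computability}. You instead re-derive this kind of estimate from scratch: truncate $\ln\lambda$ at a level $\epsilon$ into a bounded part $g$ and a tail $h$, bound $\int h\,dF_p$ via integration by parts and the power-law decay $F_p(\lambda)\le C\lambda^\alpha$, and handle $g$ by an effective Weierstrass approximation with rational polynomial coefficients whose trace is then an exact $\Q$-valued word-problem computation. The two approaches buy different things: the paper's is shorter because it black-boxes the quantitative approximation into a citation, whereas yours is more self-contained and conceptually transparent about where the Novikov--Shubin hypothesis enters. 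One step in your write-up that deserves more care is the "constructive Weierstrass" invocation: the sample values $g(\lambda)=\max(\ln\lambda,\ln\epsilon)$ are only computable reals, not rationals, so one should explain how to obtain rational polynomial coefficients approximating $g$ to a prescribed accuracy (e.g.\ via Bernstein polynomials with the Lipschitz bound $1/\epsilon$ and rational approximations of the logarithms, and a crude estimate showing that small perturbations of the coefficients perturb $\trCG(q(\Delta_p))$ by a controllably small amount). The paper avoids this issue entirely by working with the characteristic sequence, whose terms are integer-coefficient polynomials in $\Delta_p$ by construction. Modulo that clarification, your argument is sound and independent of the specific estimate cited in the paper.
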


In Turing degree~$\mathbf{0}$, the corresponding versions of
Theorem~\ref{thm:main:RC} and
Theorem~\ref{thm:main:L2-computable-det-class} had already been
established by Groth~\cite{Groth2012}.

Spectral estimates as in the proofs of Theorem~\ref{thm:main:RC} and
Theorem~\ref{thm:main:L2-computable-det-class} also lead to a
quantitative version of L\"uck's approximation theorem
(Proposition~\ref{prop:quantitative-Lueck}). This gives more 
explicit computability statements in the case of finitely presented
residually finite groups (Corollary~\ref{cor:use-Lueck-quantitative}).

Following work of Pichot, Schick, and \.Zuk~\cite{Pichot2015} for
the Turing degree~$\mathbf{0}$, we show a realisation result
for computable numbers parametrised over the Turing degree of the word
problem:

\begin{thm}[Corollary~\ref{cor:realisation}]\label{thm:main:realisation}
  Let $\mathbf{a}$ be a Turing degree. The set of $L^2$-Betti numbers
  arising from finitely generated groups of determinant class with
  word problem of degree at most~$\mathbf{a}$ is equal
  to the set of nonnegative, $\mathbf{a}$-computable real numbers.
\end{thm}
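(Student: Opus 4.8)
The claimed equality follows from two inclusions.

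For ``$\subseteq$'', let $r$ be an $L^2$-Betti number arising from a finitely generated group $G$ of determinant class whose word problem has degree at most~$\mathbf a$; by definition $r=\dim_{\vN G}\ker(\ltmul{A})$ for some matrix $A$ over~$\Z G$, and $r\ge 0$ since $\dim_{\vN G}$ is nonnegative. Choosing word representatives for the group elements occurring in $A$ presents it by a matrix over~$\Z S^*$ whose image in~$M_{n\times n}(\Z G)$ is of determinant class, because $G$ is; hence a rational number certifying this exists (Lemma~\ref{lemma:certify-DCC}). Feeding $G$ (via a finite generating set), an algorithm of degree at most~$\mathbf a$ for the word problem, this matrix, and such a certificate to the algorithm of Theorem~\ref{thm:main:L2-computable-det-class} produces a rational sequence effectively converging to~$r$, so $r$ is $\mathbf a$-computable; it is also nonnegative.

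For ``$\supseteq$'', let $r\ge 0$ be $\mathbf a$-computable; the plan is to adapt the realisation construction of Pichot, Schick, and \.Zuk~\cite{Pichot2015}. First, $\mathbf a$-computability of $r$ yields an $\mathbf a$-computable sequence $(r_k)_{k\in\N}$ of nonnegative rationals with $\sum_{k\in\N} r_k=r$ and an explicit modulus of convergence (one may take $r_0=\lfloor r\rfloor$). Each single value $r_k$ is realised by an explicit matrix over the integral group ring of a finite group, a product of cyclic groups $\Z/n$, built from the norm elements $\sum_{g\in\Z/n}g$, which satisfy $\bigl(\sum_{g}g\bigr)^2=n\sum_{g}g$. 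The substance of the argument is to assemble this sequence of local data into a \emph{single} finitely generated group $G$ together with a \emph{single} matrix $A$ over~$\Z G$ such that $\dim_{\vN G}\ker(\ltmul{A})=\sum_{k\in\N} r_k=r$; here the construction of~\cite{Pichot2015} enters, producing $G$ as a finitely generated amenable group (for instance of lamplighter / iterated-wreath-product type, i.e.\ an extension of a locally finite group by an abelian group). Since amenable groups are sofic and sofic groups are of determinant class, $G$ qualifies for the left-hand side, and $r$ is nonnegative by construction.

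What goes beyond the Turing degree~$\mathbf 0$ case of~\cite{Pichot2015}, and what I expect to be the main obstacle, is the effectivity bookkeeping: one has to check that the whole assembly is uniform in an oracle of degree~$\mathbf a$ computing $(r_k)_{k\in\N}$, and --- crucially --- that the word problem of the resulting finitely generated group $G$ has Turing degree at most~$\mathbf a$, rather than merely being recursively enumerable relative to~$\mathbf a$. Being recursively presented relative to~$\mathbf a$ does not suffice for this; instead one exploits the transparent locally-finite-by-abelian structure of $G$, in which an arbitrary group element can be evaluated explicitly once the finitely many relevant orders $n$ have been read off the oracle, to solve the word problem within degree~$\mathbf a$. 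Granting this, both inclusions hold and the two sets coincide.
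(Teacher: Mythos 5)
Your $\subseteq$ inclusion is exactly the paper's: it reduces to Theorem~\ref{thm:main:L2-computable-det-class} after choosing a generating set, an~$\mathbf a$-algorithm for the word problem, a representative of~$A$ over~$\Z S^*$, and a certificate as in Lemma~\ref{lemma:certify-DCC}. For the $\supseteq$ inclusion you correctly identify the Pichot--Schick--\.Zuk construction and, crucially, that the delicate point is bounding the Turing degree of the word problem of the constructed group by~$\mathbf a$ (not merely $\mathbf a$-recursive enumerability) --- that is indeed the substance of the paper's Theorem~\ref{thm:EC-realised}.

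Two caveats on your sketch of~$\supseteq$. First, the structure of the reduction is not ``decompose $r=\sum_k r_k$ into rationals, realise each, and assemble''; one cannot assemble infinitely many separate groups into a single finitely generated one in any obvious way. The paper instead invokes a closure lemma (Lemma~\ref{lemma:EC-subset-U}, after \cite[Prop.~11.4]{Pichot2015}): it suffices that the target set contain~$\Q_{\ge0}$ (realised by finite groups), be closed under addition and under affine maps with nonnegative rational coefficients, and contain the specific numbers $a+q\sum_{k}2^{k-dn_k}$ for every strictly increasing $\mathbf a$-computable~$n$; the whole infinite sum in the last item is produced in \emph{one shot} by a single group~$G_I$ and a single matrix, with $I=\operatorname{im}(n)$. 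Second, your description of~$G_I$ as ``locally-finite-by-abelian'' is off: $G_I=\bigl((\Z/2)^{\oplus\Gamma}/V_{F_l,I}\bigr)\rtimes\Gamma$ with $\Gamma=\Z\wr\Z$, so the normal part is abelian (indeed locally finite) but the quotient~$\Gamma$ is metabelian, not abelian. The fact that its word problem has degree at most $\mathbf a=\deg(I)$ is not ``read the orders off the oracle'' but follows the argument of \cite[Thm.~12.4]{Pichot2015}, relativised to the oracle for~$I$; and $G_I$ is of determinant class because it is sofic. With these corrections your plan lines up with the paper's proof, but as written the~$\supseteq$ direction is left at the level of a plan rather than a proof.
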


However, not every single group gives rise to $L^2$-Betti numbers
of the Turing degree of the word problem. 

\begin{remark}[]
	Let $G$~be a finitely generated, torsion-free, solvable
	group with unsolvable word problem (such groups exist, see Corollary~\ref{cor:torsfree-solv}). Because $G$ is solvable, it is in Linnell's class $\mathcal{C}$. 
	Since $G$ is in this class and torsion-free, all $L^2$-Betti numbers arising from~$G$
	are integral~\cite[Theorem~1.5, p.~564]{Linnell}, hence in particular effectively
	computable. But the word problem in~$G$ is \emph{not} of Turing degree~$\mathbf{0}$
	by assumption.	
\end{remark}

Ordinary Betti numbers of finitely presented groups can be computed by
an algorithm of Turing degree~$\mathbf 3$~\cite{nabutovsky_weinberger}.
With analogous arguments, we obtain for $L^2$-Betti numbers:

\begin{thm}[Theorem~\ref{thm:algoL2B}]\label{thm:main:algoL2B}
  Let $\mathbf{a}$ be a Turing degree. Then there exists an
  algorithm of Turing degree at most~$\mathbf a ^{(4)}$ that
  \begin{itemize}
  \item given a finitely generated group~$G$, a finite generating set~$S$, 
    and an algorithm of degree at most~$\mathbf a$ solving
    the word problem for~$G$ with respect to~$S$, and given~$k\in \N$,
  \item computes the binary expansion for~$\ltb k (G)$
    (or detects that this value is~$+\infty$).
  \end{itemize}
\end{thm}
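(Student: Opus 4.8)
The plan is to combine the right-computability from Theorem~\ref{thm:main:RC} (more precisely, from its effective form, which produces a monotone decreasing rational sequence converging to the $L^2$-Betti number from above with a bound given by a spectral estimate) with an independent lower bound on the $L^2$-Betti number obtained via L\"uck's approximation theorem, so that from the two we can produce the binary expansion. Recall that for a matrix $A \in M_{m \times n}(\Z G)$ and a finitely generated group $G$ with word problem of degree~$\mathbf a$, the sequence $r_k := \trCG(1 - (\ltmul{A})^* \ltmul{A}/K)^k$ (for a suitable integer bound~$K$ on the operator norm, computable from $A$) is a decreasing sequence of reals, computable relative to~$\mathbf a$, converging to $\dim_{\vN G} \ker(\ltmul{A})$; this is an $\mathbf a$-right-computable presentation. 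What we do \emph{not} get for free is a lower approximation, and the whole content of the theorem is producing one at the cost of four jumps.

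First I would set up the finitely presented data. Given a finite generating set~$S$ and the word problem oracle, one can write down a presentation complex and hence a chain complex of $\Z G$-modules with $\Z$-bases; in particular each differential is represented by an explicit integer matrix over the group ring $\Z S^*$, so we are exactly in the situation of Theorem~\ref{thm:main:RC} and $\ltb k(G) = \dim_{\vN G} \ker(\ltmul{A}) - \dim_{\vN G} \overline{\im(\ltmul{B})}$ for the appropriate boundary matrices $A, B$; each of the two terms is a kernel dimension of a single group-ring matrix, so it suffices to handle $\dim_{\vN G}\ker(\ltmul{A})$. The value is $+\infty$ exactly when the ambient module has infinite von Neumann dimension, equivalently when $G$ is finite and a rank count fails; detecting finiteness of~$G$ from the word-problem oracle is a $\mathbf a'$-decidable question (enumerate elements, check for coincidences), so we may assume $G$ is infinite and $\ltb k(G) < \infty$ is a finite real number in $[0,\infty)$.

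The core step is to approximate $\ltb k(G)$ from below. Following Nabutovsky--Weinberger's argument for ordinary Betti numbers, I would pass to finite quotients: L\"uck's approximation theorem says that for a finitely presented residually finite group $\ltb k(G) = \lim_i \dim_\C H_k(X; \C)/[G : G_i]$ along a chain of finite-index normal subgroups, but in general we do not have residual finiteness; instead the relevant input is that each finite quotient $Q$ of $G$ yields $b_k(X_Q)/|Q| \le$ something, and more importantly that \emph{all} $L^2$-Betti numbers can be squeezed. The honest route, and the one that keeps the count at $\mathbf a^{(4)}$, is: (i) enumerating finite quotients of $G$ and their normal subgroups is $\mathbf a'$-level work (it requires deciding equality of elements in quotients, hence the word problem plus one existential layer to certify that a candidate finite-index subgroup is all of the relevant cosets — really we need to verify a subgroup has a given finite index, which is $\Pi^0_1$ relative to~$\mathbf a$, hence $\mathbf a'$); (ii) computing the finite-dimensional Betti number $b_k$ of the associated finite CW-complex is then decidable relative to that data; (iii) L\"uck's theorem gives convergence, but to turn a mere limit into an \emph{effective} lower approximation we need a \emph{rate}, and no rate is available in general. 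This is the main obstacle. The way out, exactly as in Nabutovsky--Weinberger, is to avoid needing a rate: instead of approximating, we \emph{decide} each bit. Having an $\mathbf a$-right-computable presentation (upper sequence $r_k \searrow \ltb k(G)$) together with the fact that the \emph{sequence of lower values from finite quotients has supremum equal to $\ltb k(G)$} (Lück, without rate) lets us semi-decide, for a rational~$q$, both "$\ltb k(G) < q$" (search the upper sequence for an $r_k < q$ — this is $\Sigma$ relative to~$\mathbf a$) and "$\ltb k(G) > q$" (search finite quotients for a lower value exceeding~$q$ — this is $\Sigma$ relative to the $\mathbf a'$-level quotient machinery, hence $\Sigma$ relative to~$\mathbf a'$, i.e. decidable relative to~$\mathbf a''$). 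At an irrational point both semi-decisions terminate; at a rational value of $\ltb k(G)$ (and these do occur) one must decide "$\ltb k(G) = q$", which is the co-infinite-difference of the two, costing two further jumps (a $\Pi$-over-the-$\mathbf a''$-data statement, decidable relative to $\mathbf a^{(3)}$ to detect equality, and one more to actually drive the binary-expansion algorithm uniformly). Assembling: the binary expansion of $\ltb k(G)$ is computed by, at each bit, comparing $\ltb k(G)$ to a dyadic rational using the oracle $\mathbf a^{(4)}$, which simultaneously runs the upper search, the lower finite-quotient search, and the finitely many jump-queries needed to resolve ties; uniformity in all the input data is routine. I would then remark that when $G$ happens to be residually finite and finitely presented, Proposition~\ref{prop:quantitative-Lueck} supplies the missing rate and the count drops (as in Corollary~\ref{cor:use-Lueck-quantitative}), explaining why the general bound is~$\mathbf a^{(4)}$ rather than something smaller.
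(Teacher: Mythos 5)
Your approach does not work, for two structural reasons.

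First, you assume that $\ltb k(G)$ can be written as a difference of two von Neumann kernel dimensions $\dim_{\vN G}\ker(\ltmul{A}) - \dim_{\vN G}\overline{\im(\ltmul{B})}$ for boundary matrices $A,B$ over $\Z G$ obtained from ``a presentation complex''. This presupposes that $G$ is not just finitely generated but admits a free $\Z G$-resolution of $\Z$ that is finitely generated through degree $k+1$ (type $FP_{k+1}$), which is false for general finitely generated $G$ and not even guaranteed for finitely presented $G$ once $k\geq 2$. The whole difficulty that forces the paper's formula
\[
\ltb k(G) = \sup_{i}\ \inf_{j\geq i}\ \dim_{\vN G}\bigl(\im(\varphi_{ij}\colon H_k(D_*^i)\to H_k(D_*^j))\bigr)
\]
is precisely that one must take a directed colimit over an algorithmically enumerated sequence of partial resolutions $C_*^i$, since no single finite complex computes the $L^2$-homology; this colimit is also what produces the possible value $+\infty$. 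Your claim that $\ltb k(G)=+\infty$ only for finite groups is false (the $L^2$-Betti numbers of a finite group are all finite, while an infinite finitely generated group not of type $FP_k$ can have $\ltb k(G)=+\infty$), which is another symptom of the missing colimit.

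Second, your lower approximation rests on L\"uck's approximation theorem via finite quotients. This requires $G$ to be residually finite, and a general finitely generated group has no nontrivial finite quotients at all (e.g.\ finitely generated infinite simple groups), so the ``lower finite-quotient search'' never terminates and does not semi-decide ``$\ltb k(G) > q$''. There is no repair along these lines without a residual-finiteness hypothesis that the theorem does not assume. The paper avoids the need for any lower approximation by a different device: $\ltb k(G)$ is exhibited as a triple iterated limit $\lim_i\lim_j\lim_n q(i,j,n)$ with $q$ of degree $\mathbf a$ --- the innermost limit coming from the monotone characteristic sequences (Lemma~\ref{lem:dimkeralgo}), the dimension of the image of $\varphi_{ij}$ reduced to three kernel dimensions (Lemma~\ref{lem:dimim}), and the outer two limits being the $\sup$-$\inf$ above --- and then the generic iterated-limits Lemma~\ref{lemma:algo-iterated-limits-2} converts this into a binary-expansion algorithm (and $+\infty$ detection) of degree $\mathbf a^{(4)}$. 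Your jump-counting is also not carried out; the paper's count is transparent once the triple-limit structure is in place.
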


We do not know whether the bound~``$\mathbf a ^{(4)}$'' is optimal.
The realisation result Theorem~\ref{thm:main:realisation} 
suggests 
that there might not be a version of Theorem~\ref{thm:main:algoL2B} with
a uniform finite Turing degree instead of~``$\mathbf a ^{(4)}$''. 

\subsection{\texorpdfstring{Related work on computability and $L^2$-invariants}{Related work on computability and L2-invariants}}

The most prominent open problem on the range of values of $L^2$-Betti
numbers is the Atiyah problem~\cite[Chapter~10]{lueck_l2}. One version
of this problem asks whether all $L^2$-Betti numbers arising from
torsion-free groups are integers. While many positive examples are
known, several generalised versions for groups with torsion have been
disproved. Austin showed in a non-constructive way that irrational
values occur~\cite[Corollary~1.2]{Austin2013}. Grabowski established
that all non-negative reals arise from finitely generated groups and
gave the first explicit examples with irrational
values~\cite[Theorem~1.1]{Grabowski2014}.  The following question
remains open: Are all $L^2$-Betti numbers over finitely generated
torsion-free groups integers?

In the direction of Theorem~\ref{thm:main:algoL2B}, it was already
known that $L^2$-Betti numbers of matrices/groups are not computable
from finite presentations: a concrete example comes from lamplighter
groups. For finitely presented groups~$G$ that contain the group~$\Z/2
\wr \Z$ as a subgroup, the problem of determining whether a given
matrix over~$\Z[G^3]$ leads to a trivial $L^2$-Betti number or not is
known to be undecidable~\cite[Theorem~1.1]{Grabowski2015}.

Less specifically, a simple witness construction argument shows that
$L^2$-Betti numbers and cost of finitely presented groups are not
computable from finite presentations~\cite[Remark~8.11]{ffflmm_finpres}.


\subsection{\texorpdfstring{Implementation in \lean}{Implementation in lean}}

We formalised the results and proofs on computability of values of
$L^2$-Betti numbers of groups in the proof assistant
\lean~\cite{Avigad2021}.  \lean\ is based on dependent type theory and
offers a library that covers a substantial part of undergraduate
mathematics~\cite{Community2020}.

On the one hand, the implementation in a proof assistant gives a
verification of correctness of the formalised proofs. On the other
hand, the implementation process can also lead to new mathematical
insights: in proof assistants such as \lean\ it is often easier to
model abstract concepts than concrete constructions. For example, it
is easier to formalise tracial algebras than the group ring and it is
easier to formalise a relative version of computability than absolute
computability. In this way, working with proof assistants encourages a
declarative and modular style of mathematics.

Our implementation is available online \cite{leanproject} and is explained in an 
earlier version of this article \cite[Section~9]{firstversion}. 

\subsection*{Organisation of this article}

We recall basics on $L^2$-Betti numbers in Section~\ref{sec:prelimL2}
and basics on computability in
Section~\ref{sec:prelimcomp}. Section~\ref{sec:RCLC} contains the
proofs of Theorem~\ref{thm:main:RC} and the characterisation of
left-computability. In Section~\ref{sec:detclass}, we consider the
determinant class case; in particular, we prove
Theorem~\ref{thm:main:L2-computable-det-class} and
Theorem~\ref{thm:main:realisation}.  The quantitative approximation
theorem is established in Section~\ref{sec:quantitative-Lueck}.  In
Section~\ref{sec:torsion}, we prove Theorem~\ref{thm:main:torsion} on
$L^2$-torsion.  Theorem~\ref{thm:main:algoL2B} on the computation of
$L^2$-Betti numbers from a description of the group is shown in
Section~\ref{sec:computation-L2-Betti-of-groups}.

\begin{ack}
  CL is grateful to Nicolaus Heuer for discussions on an
  earlier incarnation of these questions. 
  We would like to thank Wolfgang L\"uck and Thomas Schick
  for pointing us to the estimates for $L^2$-torsion.
  We appreciate the detailed report and suggestions by the
  first referee. We would like to thank Vasco Brattka for a discussion on this article.
  We are grateful to Francesco Fournier-Facio for pointing out to us the strategy 
  for proving Proposition~\ref{prop:uncount-torsfree-solv}.
\end{ack}


\section{\texorpdfstring{Preliminaries on $L^2$-Betti numbers}{Preliminaries on L2-Betti numbers}}
\label{sec:prelimL2}

Originally, $L^2$-Betti numbers were introduced by Atiyah
in a geometric setting by analytic means~\cite{Atiyah1976}.
By now, many extensions and descriptions are available
in view of the work of Eckmann, Dodziuk, Farber, and L\"uck.
In this article, we will mostly refer to the books
by Lück~\cite{lueck_l2} and Kammeyer~\cite{Kammeyer2019}.

In this section, let $G$ be a finitely generated group.

\subsection{\texorpdfstring{$L^2$-Betti numbers}{L2-Betti numbers}}

Topologically, $L^2$-Betti numbers can be defined as follows: Let $X$
be a proper $G$-CW-complex of finite type. Then, the $L^2$-Betti
numbers of~$X$ are the von Neumann-dimensions of the homology groups
of the $L^2$-completion of the cellular chain complex
of~$X$~\cite[Chapter~3.3]{Kammeyer2019}. We say that a real number is
an $L^2$-Betti number arising from~$G$ if there is a $G$-CW-complex
that has this number as one of its $L^2$-Betti numbers. Equivalently,
these numbers admit an algebraic
description~\cite[Lemma~10.5]{lueck_l2}\cite[Proposition~3.29]{Kammeyer2019}:

\begin{defi}[$L^2$-Betti numbers arising from a group]
  \label{def:L2-Betti-numbers}
  Let $b\in \R_{\ge 0}$. We say that $b$ is an \emph{$L^2$-Betti
    number arising from $G$} if there are~$n,m\in \N$ and a
  matrix~$A\in M_{m\times n} (\Z G)$ such that
  \[
  b = \dim_{\vN G} \ker (\ltmul{A}).
  \]
  We explain the occuring terms below:
  \begin{itemize}
  \item By~$\ell^2 G$, we denote the Hilbert space
    of square-summable complex sequences on~$G$, i.e.,  
    functions $a:G\to\C$ such that $\sum_{g\in G} |a_g|^2 < \infty$.
  \item By~$\ltmul{A}$, we denote the bounded linear map 
    $(\ell^2 G)^m \to (\ell^2 G)^n$ that is given by 
    right-multiplication with $A$, and $\ker (\ltmul{A})$
    denotes its kernel, which is a $G$-submodule
    of~$(\ell^2 G)^m$. 
  \item By~$\dim_{\vN G}$, we denote the \emph{von Neumann dimension}. One
    should keep in mind that this number is a priori a non-negative
    real number and not necessarily an integer. For details on
    the construction of the von Neumann dimension, we
    refer to the literature~\cite[Chapter~1.1]{lueck_l2}\cite[Chapter~2.3]{Kammeyer2019}.
  \end{itemize}
  We abbreviate~$\ltm AG \coloneqq  \dim_{\vN G} \ker (\ltmul{A})$.
\end{defi}

\begin{remark}[self-adjointness]
  In Definition~\ref{def:L2-Betti-numbers}, we can equivalently also
  demand that $m=n$ and that $A$ is self-adjoint, i.e., that $A^* = A$
  (because $A$ and $AA^*$ have the same kernel). Here, the involution
  $\cdot^*$ is given by taking the transpose of the matrix and
  applying the elementwise involution $\sum_{g\in G} a_g\cdot g
  \mapsto \sum_{g\in G} a_g\cdot g^{-1}$ on~$\Z G$.
\end{remark}

Special $L^2$-Betti numbers arising from a group~$G$ are the
$L^2$-Betti numbers~$\ltb *(G)$ of the group~$G$ itself~\cite[Chapter~1 and~6.5]{lueck_l2}:
If $G$ is a group that admits a classifying space~$X$ of finite type
and $k \in \N$, then the $k$-th $L^2$-Betti number of~$G$ is equal
to~$\ltm{\Delta_k}G$, where $\Delta_k$ is the cellular Laplacian
in degree~$k$ of the universal covering of~$X$.

\subsection{Spectral measures}

We recall basic properties of spectral measures. The spectral
measure of a self-adjoint matrix over the group ring is
characterised by the property in Proposition~\ref{prop:spec-meas-poly},
relating integration over the spectral measure to traces.

\begin{defi}[trace~{{\cite[Equation~3.170]{lueck_l2}}}]
  \label{def:trace}
  Let $n\in \N$. We define the \emph{trace} by
  \begin{align*}
    \trCG : M_{n\times n} (\C G) & \longrightarrow \C\\
    A &\longmapsto \sum_{i=1}^n (A_{ii})_{e},
  \end{align*}
  where $(A_{ii})_{e}$ is the contribution of the neutral element
  of~$G$ to the $i$-th diagonal element of~$A$.
\end{defi}

\begin{prop}[characterisation of the spectral
    measure~{{\cite[Definition~5.9]{Kammeyer2019}}}]
  \label{prop:spec-meas-poly}
  Let $n\in \N$ and $A\in M_{n\times n}(\Z G)$ be self-adjoint.
  Let $\|\ltmul A\|$ denote the operator norm of~$\ltmul A$.
  
  Then, the \emph{spectral measure of~$A$} is the unique
  measure~$\mu_A$ on the interval~$[0, \|\ltmul A\|]$ (with the Borel
  $\sigma$-algebra) such that: for all polynomials $p\in \R[x]$, we
  have
  \[
  \trCG\bigl(p(A)\bigr) = \int_0^{\|\ltmul A\|} p(x)\; d\mu_A(x). 
  \]
\end{prop}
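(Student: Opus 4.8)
The plan is to realise~$\mu_A$ as (a variant of) the pushforward of the spectral measure of~$\ltmul A$ along a trace. Concretely, I would set $T := \ltmul A$; since $A$ is self-adjoint, $T$ is a bounded self-adjoint operator on $(\ell^2 G)^n$ with $\|T\| = \|\ltmul A\|$, so the spectral theorem provides a projection-valued measure~$E$ on the Borel sets of $I := [0,\|T\|] \supseteq \operatorname{spec}(T)$ with $f(T) = \int_I f\, dE$ for bounded Borel~$f$. For $i\in\{1,\dots,n\}$ let $e_i\in(\ell^2 G)^n$ be the unit vector whose $i$-th coordinate is the function on~$G$ supported on the neutral element with value~$1$ and whose other coordinates vanish; then $\nu_i := \langle E(\args)\,e_i,e_i\rangle$ is a Borel probability measure on~$I$, and I would take $\mu_A := \sum_{i=1}^n \nu_i$, a finite positive Borel measure on~$I$ of total mass~$n$.

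For existence, the key step is the bookkeeping identity
\[
  \trCG(B) = \sum_{i=1}^n \langle \ltmul B\, e_i, e_i\rangle
  \qquad\text{for all } B\in M_{n\times n}(\C G),
\]
which follows by unwinding Definition~\ref{def:trace} and the definition of right-multiplication (one gets $\langle \ltmul B\, e_i, e_i\rangle = (B_{ii})_e$). Applying this to $B = p(A)$, using that $B\mapsto\ltmul B$ is linear with $\ltmul{A^k} = (\ltmul A)^k$ (so $\ltmul{p(A)} = p(\ltmul A) = p(T)$), and using the scalar spectral relation $\langle p(T)\,e_i,e_i\rangle = \int_I p\, d\nu_i$, one obtains $\trCG(p(A)) = \sum_i \int_I p\, d\nu_i = \int_I p\, d\mu_A$ for every $p\in\R[x]$. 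For uniqueness, I would note that two finite Borel measures on the compact interval~$I$ with equal integrals against all polynomials agree against all of~$C(I)$ by the Stone--Weierstrass theorem, hence coincide by the uniqueness part of the Riesz representation theorem.

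I do not expect a serious obstacle here: the statement is essentially a repackaging of the spectral theorem, and the only thing requiring care is the bookkeeping translating the algebraic trace~$\trCG$ into Hilbert-space language --- in particular the identity above and the harmless fact that, although $B\mapsto\ltmul B$ is an \emph{anti}-homomorphism of rings, it is still multiplicative on powers of the single matrix~$A$. As an alternative to the elementary argument above, one can phrase everything inside the matrix algebra $M_{n\times n}(\vN G)$ over the group von Neumann algebra and define $\mu_A$ as the pushforward of~$E$ along the canonical trace on $M_{n\times n}(\vN G)$, which restricts to~$\trCG$ on $M_{n\times n}(\C G)$; in that formulation the one subtlety is interchanging the trace with the spectral integral, which is justified by normality of the trace (or, avoiding limits, by checking the identity only on monomials~$x^k$ and extending linearly).
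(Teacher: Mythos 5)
The paper does not prove this proposition; it imports it from Kammeyer's book and treats it as a known characterisation, so there is no in-paper argument to compare against. Your derivation is correct and is essentially the standard one: build $\mu_A = \sum_{i=1}^n \nu_i$ from the scalar spectral measures $\nu_i = \langle E(\args)\,e_i, e_i\rangle$ at the canonical vectors~$e_i$, verify the trace identity on polynomials via $\ltmul{p(A)} = p(\ltmul A)$ together with the bookkeeping identity $\trCG(B) = \sum_i \langle \ltmul B\, e_i, e_i\rangle = \sum_i (B_{ii})_e$, and obtain uniqueness from Stone--Weierstrass plus Riesz; the anti-homomorphism caveat for $B\mapsto\ltmul B$ is a real subtlety and you handle it correctly by restricting to powers of the single matrix~$A$. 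The one point worth flagging is positivity: for a self-adjoint but not positive~$A$, the spectrum of~$T=\ltmul A$ lies in~$[-\|T\|,\|T\|]$ and need not be contained in~$I=[0,\|T\|]$, so your asserted inclusion $I\supseteq\operatorname{spec}(T)$ --- on which both the total-mass identity $\mu_A(I)=n$ and the equality $p(T)=\int_I p\,dE$ depend --- requires $T\geq 0$. This assumption is already implicit in the statement (the measure is declared to live on~$[0,\|\ltmul A\|]$) and in Kammeyer's Definition~5.9, and in all of the paper's applications $A$ has the form~$BB^*$ or is a Laplacian, hence is positive, so nothing is lost; but a clean write-up should say ``positive'' rather than merely ``self-adjoint'', or prove the inclusion from positivity. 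Your alternative phrasing via the canonical trace on~$M_{n\times n}(\vN G)$ is likewise correct and closer to the operator-algebraic setup Kammeyer uses.
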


Conveniently, we can express $L^2$-Betti numbers
via the spectral measure.

\begin{prop}[$L^2$-Betti numbers via spectral measure~\protect{\cite{Lueck1994}\cite[p.~97]{Kammeyer2019}}]
  \label{prop:spec-meas-kernel}
  Let  $n\in \N$
  and $A\in M_{n\times n}(\Z G)$ be self-adjoint.
  Then, we have
  \[
  \ltm A G = \mu_A \bigl(\{0\}\bigr).
  \]
\end{prop}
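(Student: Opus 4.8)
The plan is to identify~$\ltm A G$ with the trace of the spectral projection of~$\ltmul A$ at the value~$0$, and then to compute that trace as~$\mu_A(\{0\})$ by extending the identity of Proposition~\ref{prop:spec-meas-poly} from polynomials to the characteristic function of~$\{0\}$.

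Write~$\phi := \ltmul A$. Since $A$ is self-adjoint, $\phi$ is a bounded self-adjoint $G$-equivariant operator on~$(\ell^2 G)^n$, so $\ker\phi$ is a closed $G$-invariant subspace and the orthogonal projection~$P$ onto it lies in the von Neumann algebra of $n\times n$ matrices over~$\vN G$. By definition of the von Neumann dimension, $\ltm A G = \dim_{\vN G}\ker\phi = \tr_{\vN G}(P)$, where $\tr_{\vN G}$ denotes the faithful, normal, positive trace extending~$\trCG$ (see~\cite[Chapter~1.1]{lueck_l2}). Moreover, $\ker\phi$ is the eigenspace of~$\phi$ for the eigenvalue~$0$, i.e., the range of the spectral projection $\chi_{\{0\}}(\phi)$ obtained from the Borel functional calculus for~$\phi$; hence $P = \chi_{\{0\}}(\phi)$, and it remains to show $\tr_{\vN G}\bigl(\chi_{\{0\}}(\phi)\bigr) = \mu_A(\{0\})$.

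First I would extend Proposition~\ref{prop:spec-meas-poly} to continuous functions. By the Weierstrass approximation theorem every continuous $f\colon[0,\|\phi\|]\to\R$ is a uniform limit of polynomials; since the continuous functional calculus is isometric, $\tr_{\vN G}$ is norm-continuous, and $\mu_A$ is a finite measure, both sides of the defining identity pass to this limit, so $\tr_{\vN G}\bigl(f(\phi)\bigr) = \int_0^{\|\phi\|} f\,d\mu_A$ for every continuous~$f$. Now, for $\lambda > 0$, apply this to the ``tent'' function $f_\lambda\colon[0,\|\phi\|]\to[0,1]$, $f_\lambda(x) := \max(0,\,1 - x/\lambda)$, which satisfies $\chi_{\{0\}}\le f_\lambda\le\chi_{[0,\lambda]}$ pointwise and, as $\lambda\to 0^+$, decreases pointwise to~$\chi_{\{0\}}$. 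Since the Borel functional calculus is order-preserving and $\tr_{\vN G}$ is positive, this gives
\[
  \tr_{\vN G}\bigl(\chi_{\{0\}}(\phi)\bigr) \;\le\; \tr_{\vN G}\bigl(f_\lambda(\phi)\bigr) \;=\; \int_0^{\|\phi\|} f_\lambda\,d\mu_A \;\le\; \mu_A\bigl([0,\lambda]\bigr),
\]
and likewise $\mu_A(\{0\}) = \int_0^{\|\phi\|}\chi_{\{0\}}\,d\mu_A \le \int_0^{\|\phi\|} f_\lambda\,d\mu_A = \tr_{\vN G}\bigl(f_\lambda(\phi)\bigr)$. Letting $\lambda\to 0^+$: continuity of the finite measure~$\mu_A$ from above yields $\mu_A\bigl([0,\lambda]\bigr)\to\mu_A(\{0\})$, so the displayed chain gives $\tr_{\vN G}\bigl(\chi_{\{0\}}(\phi)\bigr)\le\mu_A(\{0\})$; and since $f_\lambda(\phi)\downarrow\chi_{\{0\}}(\phi)$ in the strong operator topology, normality of~$\tr_{\vN G}$ yields $\tr_{\vN G}\bigl(f_\lambda(\phi)\bigr)\to\tr_{\vN G}\bigl(\chi_{\{0\}}(\phi)\bigr)$, so the second estimate gives $\mu_A(\{0\})\le\tr_{\vN G}\bigl(\chi_{\{0\}}(\phi)\bigr)$. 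Hence $\tr_{\vN G}\bigl(\chi_{\{0\}}(\phi)\bigr) = \mu_A(\{0\})$, as required.

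The identifications in the second step (standard properties of the von Neumann dimension and of spectral projections) and the Weierstrass step are routine. The part that needs care is the passage from continuous functions to the discontinuous characteristic function~$\chi_{\{0\}}$: this is exactly where one must invoke normality of the von Neumann trace --- equivalently, continuity of the spectral calculus along bounded monotone limits --- an ingredient not visible in the purely polynomial identity of Proposition~\ref{prop:spec-meas-poly}. Interposing the continuous functions~$f_\lambda$ between~$\chi_{\{0\}}$ and~$\chi_{[0,\lambda]}$ is the device that makes this limiting argument run in both directions.
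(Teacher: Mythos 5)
The paper does not give its own proof of this proposition; it cites it as a known fact from L\"uck's 1994 approximation paper and Kammeyer's book. Your argument is correct and is, in substance, the standard proof found in those sources: identify $\ltm A G$ with the von Neumann trace of the spectral projection $\chi_{\{0\}}(\ltmul A)$, extend the polynomial identity of Proposition~\ref{prop:spec-meas-poly} to continuous functions via Weierstrass and norm-continuity of the (finite, hence automatically bounded) trace, and then reach the Borel function $\chi_{\{0\}}$ by a monotone approximation from above by the continuous tent functions $f_\lambda$, invoking normality of the trace on one side and continuity from above of the finite measure $\mu_A$ on the other. You correctly isolate normality as the genuinely new input beyond the polynomial identity, and the sandwich $\chi_{\{0\}} \le f_\lambda \le \chi_{[0,\lambda]}$ is exactly the device that makes the two-sided estimate run. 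Two minor remarks: the continuous functional calculus on $C([0,\|\ltmul A\|])$ is in general only a \emph{contractive} $*$-homomorphism, not isometric, unless the spectrum fills the interval --- but contractivity is all your limiting argument uses; and the two-sided sandwich can be collapsed slightly by noting that $\tr_{\vN G}\bigl(f_\lambda(\ltmul A)\bigr) = \int f_\lambda\, d\mu_A$ for all $\lambda>0$ and letting $\lambda\to 0^+$ on both sides at once, the left by normality and the right by monotone convergence for the finite measure $\mu_A$.
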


\subsection{Characteristic sequences}
\label{subsec:charseq}

As in the proofs of approximation theorems for $L^2$-Betti numbers, 
we will use characteristic sequences to approximate $L^2$-Betti
numbers. The characteristic sequences are defined in terms of the trace
on matrices over~$\C G$ (Definition~\ref{def:trace}).

\begin{defi}[characteristic sequence~{{\cite[Definition~3.171]{lueck_l2}}}]\label{def:charseq}
  Let $A\in M_{m\times n} (\C G)$ and~$K \ge
  \|\ltmul{A}\|$. Then, we define the \emph{characteristic
    sequence}~$c(A,K) \coloneqq  (c(A,K)_p)_{p \in \N}$ of $A$ by
  \[
  c(A, K)_p \coloneqq  \trCG \bigl((1-K^{-2} \cdot A A^*)^p\bigr)
  \in \R_{\ge 0}. 
  \]
\end{defi}

\begin{prop}[$L^2$-Betti numbers via characteristic
    sequences~{{\cite[Theorem~3.172 (1), (2)]{lueck_l2}}}]
  \label{prop:char-seq}
  Let $A\in M_{m\times n} (\C G)$ and $K\ge 0$ with $K \ge
  \|\ltmul{A}\|$. Then, the characteristic sequence $(c(A,K)_p)_{p\in
    \N}$ is a monotone decreasing sequence of non-negative real
  numbers satisfying
  \[
  \ltm A G = \lim_{p\to\infty} c(A,K)_p.
  \]
\end{prop}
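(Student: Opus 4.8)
The plan is to deduce both the monotonicity and the limit formula from the spectral calculus packaged in Proposition~\ref{prop:spec-meas-poly} and Proposition~\ref{prop:spec-meas-kernel}. First I would reduce to the self-adjoint case: replacing $A$ by the self-adjoint matrix $B := AA^*\in M_{m\times m}(\C G)$ does not change the relevant data, since $\ker(\ltmul A) = \ker(\ltmul{B})$ (as in the self-adjointness remark) and $\|\ltmul{B}\| = \|\ltmul A\|^2 \le K^2$, while $c(A,K)_p = \trCG\bigl((1 - K^{-2}B)^p\bigr)$ by definition. Thus it suffices to analyse, for a self-adjoint $B$ with $\|\ltmul B\|\le K^2$, the sequence $d_p := \trCG\bigl((1 - K^{-2}B)^p\bigr)$ and to show it is monotone decreasing with limit $\mu_B(\{0\})$.

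The key step is to feed the polynomial $q(x) := (1 - K^{-2}x)^p$ into the characterisation of the spectral measure. Strictly speaking Proposition~\ref{prop:spec-meas-poly} is stated for self-adjoint matrices over $\Z G$, but the only property used is the trace--integral identity for polynomials, which holds verbatim for self-adjoint matrices over $\C G$ once one has a spectral measure; alternatively one can note $B = AA^*$ has rational — indeed integral after clearing — structure, or simply invoke the spectral theorem for the bounded self-adjoint operator $\ltmul B$ on the Hilbert space $(\ell^2 G)^m$ together with the fact that $\trCG$ is the von Neumann trace. Either way one obtains
\[
  d_p = \int_0^{K^2} \bigl(1 - K^{-2}x\bigr)^p \; d\mu_B(x).
\]
On the interval $[0,K^2]$ the integrand satisfies $0 \le 1 - K^{-2}x \le 1$, so $(1-K^{-2}x)^p$ is a sequence of measurable functions, pointwise decreasing in $p$ and bounded by $1$; since $\mu_B$ is a finite measure, each $d_p$ is a non-negative real number and the sequence $(d_p)_p$ is monotone decreasing. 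This already yields the first assertion.

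For the limit, apply the dominated (or monotone) convergence theorem: as $p\to\infty$, $(1-K^{-2}x)^p \to \mathbf 1_{\{0\}}(x)$ pointwise on $[0,K^2]$ (it equals $1$ at $x=0$ and tends to $0$ for $x>0$), dominated by the constant $1\in L^1(\mu_B)$. Hence
\[
  \lim_{p\to\infty} d_p = \int_0^{K^2} \mathbf 1_{\{0\}}(x)\; d\mu_B(x) = \mu_B\bigl(\{0\}\bigr),
\]
and by Proposition~\ref{prop:spec-meas-kernel} the right-hand side equals $\ltm B G = \ltm A G$. The main obstacle is the bookkeeping around the hypotheses: checking that the spectral-measure machinery, as stated for $\Z G$-matrices, legitimately applies to $B = AA^*$ over $\C G$ (resp.\ that one may invoke the spectral theorem directly), and confirming that the normalisation $K \ge \|\ltmul A\|$ is exactly what is needed to keep the spectral measure $\mu_B$ supported in $[0,K^2]$ so that the integrand stays in $[0,1]$. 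Once that is settled, the monotonicity and the limit are immediate from elementary integration theory.
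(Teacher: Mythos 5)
Your proof is correct: the reduction to $B = AA^*$, the spectral-measure identity $c(A,K)_p = \int_0^{K^2}(1-K^{-2}x)^p\,d\mu_B(x)$, the observation that the integrand is valued in $[0,1]$ and monotone decreasing in $p$ on the support of $\mu_B$, and monotone (or dominated) convergence to $\mu_B(\{0\}) = \ltm AG$ via Proposition~\ref{prop:spec-meas-kernel} is exactly the standard argument. The paper itself gives no proof of this proposition --- it only cites L\"uck's book and remarks that it ``can be proved via the spectral measure'' --- so your write-up is precisely the proof the paper alludes to, and your caveat about $\C G$ versus $\Z G$ in Proposition~\ref{prop:spec-meas-poly} is resolved as you say by invoking the spectral theorem for the bounded self-adjoint operator $\ltmul B$ directly together with the fact that $\trCG$ extends to the von Neumann trace.
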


This approximation can be proved via the spectral measure.
Moreover, the rate of convergence is controlled by the spectral  
measure (see Theorem~\ref{thm:LC}). 

\section{Preliminaries on computability}
\label{sec:prelimcomp}

We collect basic notions from computability such as Turing degrees,
the computability of reals and limits, how to represent elements of finitely
generated groups and matrices over the group ring, and the word
problem for finitely generated groups.

\subsection{Turing degrees}

We quickly recall the concept of Turing degrees. More details
can be found in the work of Simpson~\cite{Simpson1977}. Algorithms
always refer to algorithms in the sense of Turing machines (possibly
amended by an oracle).

\begin{defi}[Turing reducible, equivalent, degree]
  Let $f,g : \N \to \N$.
  \begin{itemize}
  \item 	
    We say that $f$ is \emph{Turing reducible} to~$g$,
    denoted~$f\le_T g$ if there is an algorithm that
    computes~$f$, given an algorithm, called an
    \emph{oracle}, that computes~$g$.
  \item 
    We say that $f$ is \emph{Turing equivalent} to~$g$ if
    $f\le_T g$ and~$g\le_T f$.
  \item The \emph{Turing degree} of~$f$, denoted~$\deg(f)$ is the set
    of all functions~$\N \to \N$ that are Turing equivalent
    to~$f$.
  \end{itemize}
  We introduce the same notions for subsets $A\subset \N$ by
  considering their characteristic functions $\chi_A : \N \to
  \{0,1\}$. 
\end{defi}

\begin{prop}[{{\cite[Proposition~1.2]{Simpson1977}}}]
  The following statements
  hold for Turing degrees of functions~$\N\to \N$ and subsets
  of~$\N$ (as functions~$\N\to \{0,1\}$).
  \begin{enumerate}
  \item There is an uncountable number of degrees.
  \item The relation~$\le_T$ induces a partial ordering
    on the set of degrees: For $f,g:\N\to\N$, we define
    $\deg(f)\le_T \deg(g)$ if~$f\le_T g$.
  \item The degree $\mathbf{0} \coloneqq  \deg (\emptyset)$ is the least
    degree under this partial ordering. It is the set of all
    computable functions. When dealing with subsets of~$\N$,
    the degree~$\mathbf{0}$ is the set of all decidable sets.
  \end{enumerate}
\end{prop}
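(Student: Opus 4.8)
The plan is to verify the three items by the standard elementary arguments. For~(1), the key point is that every Turing degree is a \emph{countable} set: oracle Turing machines can be coded by finite strings and hence enumerated, so for any fixed $g\colon \N\to\N$ there are only countably many $f$ with $f\le_T g$, and in particular $\deg(g)$ is countable. Since there are uncountably many functions $\N\to\N$ (already uncountably many subsets of~$\N$), and the Turing degrees partition this uncountable set into countable blocks, there must be uncountably many blocks --- a set of cardinality continuum is not a countable union of countable sets. This finishes~(1).

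For~(2), I would first check that $\le_T$ is a preorder on functions $\N\to\N$. Reflexivity $f\le_T f$ is witnessed by the oracle algorithm that on input~$n$ simply queries its oracle at~$n$ and outputs the reply. Transitivity is by substitution: given an oracle algorithm~$M$ computing~$f$ from~$g$ and an oracle algorithm~$M'$ computing~$g$ from~$h$, one builds an oracle algorithm computing~$f$ from~$h$ by running~$M$ and answering each of its oracle queries about a value $g(k)$ by invoking~$M'$ with oracle~$h$; this subroutine always halts because~$h$, hence the computation of~$M'$, is total. Well-definedness of the induced relation on degrees is then immediate from transitivity: if~$f$ is Turing equivalent to~$f'$, $g$ to~$g'$, and $f\le_T g$, then $f'\le_T f\le_T g\le_T g'$. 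Reflexivity of~$\le_T$ on degrees is inherited, and antisymmetry holds by construction of degrees: $\deg(f)\le_T\deg(g)$ and $\deg(g)\le_T\deg(f)$ mean $f\le_T g$ and $g\le_T f$, i.e. $f$ and $g$ are Turing equivalent, so $\deg(f)=\deg(g)$.

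For~(3), note that $\emptyset$ regarded as a characteristic function is the constant function~$0$, which is computable; conversely, any computable $f\colon\N\to\N$ is Turing equivalent to~$\emptyset$, since $f$ can be computed by an algorithm that never consults its oracle (and the same holds for~$\emptyset$). Hence $\mathbf 0=\deg(\emptyset)$ is precisely the set of computable functions, and, passing to characteristic functions, precisely the set of decidable subsets of~$\N$. Finally, for \emph{any} $f$ we have $\mathbf 0\le_T\deg(f)$: a computable function is, in particular, computed by an oracle algorithm that ignores the oracle, so it is Turing reducible to~$f$. Therefore $\mathbf 0$ is the least element of the partial order from~(2).

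There is no serious obstacle here; the statement is foundational. The only steps that require a little care are the counting argument in~(1) --- one must observe that oracle machines form a countable set and invoke the (choice-flavoured) fact that a continuum-sized set is not a countable union of countable sets --- and the substitution step for transitivity in~(2), where totality of the bottom function is exactly what guarantees that all nested oracle queries terminate.
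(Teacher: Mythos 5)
The paper does not prove this proposition --- it simply cites Simpson's survey --- so there is no in-paper proof to compare against. Your elementary arguments are the standard ones and are correct: each degree is countable because oracle machines can be coded by natural numbers, so the continuum-many functions~$\N\to\N$ must fall into uncountably many degrees (you rightly flag the mild, choice-flavoured step in this cardinality argument); $\le_T$ is a preorder, with reflexivity by echoing the oracle and transitivity by substitution, where totality of the innermost oracle guarantees that all nested queries terminate, and passing to Turing-equivalence classes is exactly the quotient that turns a preorder into a poset, so antisymmetry is automatic from the definitions; and $\mathbf{0}=\deg(\emptyset)$ consists of precisely the computable functions (respectively decidable sets), which lie below every degree since they can be computed by an oracle algorithm that never consults its oracle. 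No gaps.
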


For every Turing degree~$\mathbf{a}$, there is bigger Turing
degree~$\mathbf a^{(1)} \coloneqq  j(\mathbf{a})$, defined via the jump
operator~\cite[p.~634]{Simpson1977} (see also the proof of
Proposition~\ref{prop:jump}).

Using a standard encoding of~$\Q$ as a sequence over~$\N$, we may 
also speak of Turing degrees of functions~$\N \to \Q$ etc..

\subsection{Computability of real numbers}\label{subsec:comp}

We recall different notions of computability of real numbers,
parametrised by Turing degrees. The definitions in the computable
case can be found in the survey by Rettinger and Zheng~\cite{Rettinger-Zheng-compute-real}.

\begin{defi}[$\mathbf{a}$-computability]
  \label{def:EC}
  Let $\mathbf{a}$ be a Turing degree of functions~$\N\to\Q$ and let~$r\in \R$.
  \begin{itemize}
  \item The real number~$r$ is called \emph{$\mathbf{a}$-computable}
    if there is a sequence~$q:\N\to \Q$ of degree at most~$\mathbf{a}$
    such that
    \[
    \fa{n\in \N} \; |r-q_n| \le 2^{-n}.
    \] 
    In this sitation, we say that $(q_n)_{n\in \N}$ \emph{effectively converges} to~$r$.
    We denote the set of $\mathbf{a}$-computable real numbers
    by~$\EC_\mathbf{a}$.
  \item The real number~$r$ is called
    \emph{$\mathbf{a}$-left-computable}
    (resp.\ \emph{$\mathbf{a}$-right-computable}) if there is a
    sequence~$q:\N\to \Q$ of degree at most~$\mathbf{a}$ such that
    \[
    r = \lim_{n\to \infty} q_n
    \]
    and $q_n \le r$ (resp.\ $q_n \ge r$) for all~$n\in \N$.  We denote
    the set of $\mathbf{a}$-left-computable
    (resp.\ $\mathbf{a}$-right-computable) real numbers
    by~$\LC_\mathbf{a}$ (resp.~$\RC_\mathbf{a}$).
  \end{itemize}
  In the case $\mathbf{a} = \mathbf{0}$, we speak of \emph{(effective)
    computability} and \emph{left-/right-computability}, respectively.
\end{defi}

\begin{prop}[characterisation of computability]
  \label{prop:char-computability}
  Let $\mathbf{a}$ be a Turing degree of functions~$\N\to\Q$ and
  let $r\in \R$. The following are equivalent:
  \begin{enumerate}
  \item The number~$r$ is $\mathbf{a}$-computable.
  \item There are sequences~$q:\N\to\Q$ and~$\epsilon:\N\to\Q$ of
    degree at most $\mathbf{a}$ such that $\lim_{n\to\infty}
    \epsilon_n = 0$ and for all $n\in \N$, we have
    \[
    |r - q_n| \le \epsilon_n.
    \] 
  \item The Dedekind cut~$\{x\in \Q \mid x<r\}$
    is a set of degree at most~$\mathbf{a}$.		
  \item There exist~$k\in \Z$ and a subset~$A\subset \N_{>0}$ of
    degree at most~$\mathbf{a}$ such that
    \[
    r = k + \sum_{n\in A} 2^{-n}.
    \]
  \end{enumerate}
\end{prop}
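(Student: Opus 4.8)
The plan is to prove a cycle of implications $(1) \Rightarrow (2) \Rightarrow (3) \Rightarrow (4) \Rightarrow (1)$, since each of these steps is reasonably self-contained. The implication $(1) \Rightarrow (2)$ is immediate: if $r$ is $\mathbf{a}$-computable with witnessing sequence $q$, take $\epsilon_n := 2^{-n}$, which is a computable (hence degree $\le \mathbf a$) sequence converging to $0$. For $(2) \Rightarrow (1)$ (which I will fold into the cycle only if needed, but it is worth keoping in mind), one thins out the sequence: given $q,\epsilon$ as in (2), compute for each $n$ an index $m(n)$ with $\epsilon_{m(n)} \le 2^{-n}$ by searching — this search terminates because $\epsilon_n \to 0$, and the search uses only the oracle $\mathbf a$ — and set $q'_n := q_{m(n)}$.

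For $(2) \Rightarrow (3)$: given the rational $x$, I want to decide whether $x < r$ using an $\mathbf a$-oracle. Compute $q_n, \epsilon_n$ for increasing $n$. If at some stage $x < q_n - \epsilon_n$, then $x < r$; if $x > q_n + \epsilon_n$, then $x > r$, so in particular $x \ge r$, i.e.\ $x \notin \{y \mid y < r\}$. Since $\epsilon_n \to 0$ and $q_n \to r$, exactly one of these two events eventually occurs \emph{unless} $x = r$. The case $x = r$ is the one subtlety: then neither inequality is ever strictly satisfied, and the naive procedure loops forever. So this is the main obstacle, and the standard fix is to observe that $\{y \in \Q \mid y < r\}$ differs from $\{y \in \Q \mid y \le r\}$ by at most the single point $r$; if $r \notin \Q$ there is nothing to fix, and if $r \in \Q$ then $r$ itself is computable data (one can extract it, e.g., once $\epsilon_n$ is small enough to pin $r$ between two consecutive rationals of a fixed denominator, or simply note that a rational real is $\mathbf 0$-computable and its Dedekind cut is decidable). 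Either way the Dedekind cut has degree $\le \mathbf a$, possibly after correcting one point.

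For $(3) \Rightarrow (4)$: let $D := \{x \in \Q \mid x < r\}$, of degree $\le \mathbf a$. Choose $k \in \Z$ with $k \le r < k+1$ (this $k$ is computable from $D$: check $k \in D$, $k+1 \notin D$). Then define the binary digits of $r - k \in [0,1)$ greedily: having fixed $a_1, \dots, a_{n-1}$, put $a_n := 1$ if $k + \sum_{i<n} a_i 2^{-i} + 2^{-n} \le r$, i.e.\ if the rational $k + \sum_{i \le n} a_i 2^{-i}$ (with $a_n=1$) is $< r$ or equal to $r$ — using $D$ to test the strict inequality, with the usual care that a dyadic rational equal to $r$ should be included; set $A := \{n \in \N_{>0} \mid a_n = 1\}$, which then has degree $\le \mathbf a$ and satisfies $r = k + \sum_{n \in A} 2^{-n}$. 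Finally $(4) \Rightarrow (1)$: given such $k$ and $A$, the partial sums $q_n := k + \sum_{m \in A,\, m \le n} 2^{-m}$ form a sequence of degree $\le \mathbf a$ with $|r - q_n| \le \sum_{m > n} 2^{-m} = 2^{-n}$, so $r$ is $\mathbf a$-computable. The only genuinely delicate point throughout is the treatment of rationals/dyadic rationals that coincide with $r$, where a strict Dedekind cut and the digit expansion interact awkwardly; everything else is bookkeeping.
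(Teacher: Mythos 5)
Your proposal is correct in its essential ideas, but it is organised differently from the paper's proof, so a comparison is worthwhile. The paper treats item~1 as a hub: it declares $1\Leftrightarrow 2$ ``straightforward'', proves $1\Leftrightarrow 4$ in detail, and says $1\Leftrightarrow 3$ ``follows in a similar fashion''. In the core step $1\Rightarrow 4$ (for $r\notin\Q$) the paper decides each digit directly from the effective Cauchy sequence, using that the candidate dyadic rational $r'$ satisfies $r'\neq r$, so some $q_j$ separates them. You instead go around the cycle $1\Rightarrow 2\Rightarrow 3\Rightarrow 4\Rightarrow 1$, factoring the paper's digit extraction into two pieces: first decide the Dedekind cut from the effective sequence ($2\Rightarrow 3$), then read off the digits from the cut ($3\Rightarrow 4$). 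The underlying trick is the same --- compare a candidate rational against $q_n\pm\epsilon_n$, and resolve the obstruction at $x=r$ by a non-constructive case split on whether $r\in\Q$ (a rational has a decidable Dedekind cut, hence of degree $\mathbf{0}\le\mathbf{a}$). Your cycle avoids proving any implication twice, whereas the paper's hub structure makes the two most-used equivalences ($1\Leftrightarrow 2$ and $1\Leftrightarrow 4$) directly visible; both are reasonable.

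A few small imprecisions are worth cleaning up. In $2\Rightarrow 3$, the parenthetical idea of ``pinning $r$ between two consecutive rationals of a fixed denominator'' is not well defined (the rationals are dense); the second option you offer --- a rational real has a decidable Dedekind cut --- is the correct one, and that alone suffices. In $3\Rightarrow 4$, the greedy rule is cleaner with a strict inequality: set $a_n := 1$ iff $s_{n-1}+2^{-n}\in D$ where $s_{n-1} := k+\sum_{i<n}a_i 2^{-i}$; then one checks by induction that $s_n < r \le s_n + 2^{-n}$, so $s_n\to r$ and no equality test against $D$ is ever needed. Your ``$\le r$'' formulation forces you to detect $s_{n-1}+2^{-n}=r$, which $D$ does not provide directly. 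Finally, the test ``$k\in D$, $k+1\notin D$'' yields $k<r\le k+1$ rather than $k\le r<k+1$ as you stated; this is harmless (when $r=k+1$ the strict-inequality greedy rule yields $A=\N_{>0}$), but the mismatch between the stated normalisation and the stated test should be reconciled.
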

\begin{proof}
	The equivalence of 1 and 2 is a straightforward argument.
	In the following, we show the equivalence of 1 and 4. The equivalence
	of 1 and 3 then follows in a similar fashion.
	
	If $r\in \Q$, then 1 holds (take a constant sequence) and so does 4 as 
	the binary expansion of $r$ is periodic in this case. 
	We therefore suppose that $r\in \R \backslash \Q$.
	
	Assume that $1$ holds, i.e., there exists a rational sequence
        $(q_n)_{n\in \N}$ of degree at most $\mathbf{a}$ such that
	\[
		|r - q_n | \le 2^{-n}
	\]
	for all $n\in \N$. We set $k \coloneqq  \lfloor r\rfloor$. We give a recursive
	algorithm of degree at most $\mathbf{a}$ which determines whether $n\in A$: Consider the number 
	\[
		r' \coloneqq  k + \Bigl( \sum_{i \in A \cap \{0,\dots, n-1\}} 2^{-i}\Bigr) +2^{-n}.
	\]
	Because $r \not\in \Q$, we have $r' \neq r$, hence there is
	$j\in \N$ such that $|r'-r| > 2^{-j}$. Because we
	have $|r - q_j | \le 2^{-j}$ by assumption, 
	we must have $q_j < r'$ or $q_j > r'$.
	In the former case, $n\not\in A$, in the latter case, we have $n\in A$.
	
	Conversely, if 4 holds, then
	\[
		    \Bigl(k + \sum_{i\in A\cap\{0,\dots,n\}} 2^{-i}\Bigr)_{n\in \N}
	\]
	is a sequence of degree at most $\mathbf{a}$ witnessing that $r$ is
	$\mathbf{a}$-computable.
\end{proof}

\begin{remark}
  The statements in Proposition~\ref{prop:char-computability} are
  \emph{not} algorithmically equivalent, i.e., in general, there is no
  algorithm that, given one representation of an effectively
  computable number, produces one of the other representations.
  
  However, by adding~$1$ to the Turing degree, we can overcome this
  problem. This is illustrated by
  Lemma~\ref{lemma:effective-conv-to-binary}.
\end{remark}

\begin{lemma}[]
  \label{lemma:effective-conv-to-binary}
  Let $\mathbf{a}$ be a Turing degree. Then, there is 
  an algorithm of degree~$\mathbf{a}^{(1)}$ that
  \begin{itemize}
  \item given a sequence~$q: \N \to \Q$ of degree~$\mathbf{a}$ that
    effectively converges to its limit,
  \item outputs the binary expansion of~$\lim_{n\to\infty} q_n$.
  \end{itemize}
\end{lemma}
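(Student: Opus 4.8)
The plan is to convert the effectively converging sequence $q : \N \to \Q$ into a binary expansion by deciding, one bit at a time, each digit of the limit $r := \lim_{n\to\infty} q_n$. The key observation is that if $r \notin \Q$ (or more precisely, if $r$ is not a dyadic rational), then each partial dyadic sum strictly differs from $r$, so the effective convergence bound eventually separates $q_n$ from the candidate value; this is exactly the argument used in the proof of $1 \Rightarrow 4$ in Proposition~\ref{prop:char-computability}. The role of the jump $\mathbf a^{(1)}$ is to resolve the non-uniformity: we first need to decide whether $r$ is a dyadic rational at all, and if so, which one, and these are $\Sigma^0_2$-type questions relative to $\mathbf a$ that become decidable with a $\mathbf a^{(1)}$-oracle.

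Concretely, first I would use the $\mathbf a^{(1)}$-oracle to decide the predicate ``$r$ is a dyadic rational'', i.e.\ ``$\exists k \in \Z\, \exists d \in \N$ such that $r = k \cdot 2^{-d}$''. Since $r = k\cdot 2^{-d}$ is equivalent to $\forall n\ |k\cdot 2^{-d} - q_n| \le 2^{-n}$ (using effective convergence), this is a $\Sigma^0_2$ statement over the parameters, hence decidable relative to $\mathbf a^{(1)}$; moreover, if it holds, we can search for the witnessing $k, d$ and then output the eventually-zero (or eventually-all-ones) binary expansion directly. In the remaining case $r$ is not dyadic, so for every $m$ the truncated candidate $c_m := k + \sum_{i \in A \cap \{0,\dots,m-1\}} 2^{-i} + 2^{-m}$ (with $k = \lfloor r\rfloor$, which we can compute since $r$ is irrational or at least non-integer, by comparing $q_n$ to integers for large enough $n$) satisfies $c_m \ne r$. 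Then the $m$-th bit is decided exactly as in the cited proof: search for $j$ with $|c_m - r| > 2^{-j}$ — verifiable using $|r - q_j| \le 2^{-j}$ and the triangle inequality, $|c_m - q_j| > 2^{-j}$ suffices — and then $q_j < c_m$ gives bit $0$, while $q_j > c_m$ gives bit $1$. This inner loop only needs the oracle for $q$, i.e.\ degree $\mathbf a$; it is the outer dyadic-rationality test that costs the jump.

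The main obstacle is handling the dyadic (terminating binary expansion) case correctly and uniformly. A real with a terminating binary expansion has two representations ($0.0111\ldots = 0.1000\ldots$), so ``the binary expansion'' must be pinned down to a canonical choice, and deciding membership in this measure-zero set of reals from an effectively converging rational sequence is genuinely non-uniform — no $\mathbf a$-algorithm can do it, which is precisely why the jump appears. Once this case is dispatched by the $\mathbf a^{(1)}$-oracle, the irrational case is a direct transcription of the Proposition~\ref{prop:char-computability} argument and presents no difficulty. One should also double-check the edge case $k = \lfloor r\rfloor$ versus negative $r$ and the normalisation of the output format, but these are routine bookkeeping once the case split is in place.
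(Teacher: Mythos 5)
There is a genuine gap in your case split. You observe, correctly, that the predicate \emph{$r$ is a dyadic rational}, rewritten as
$\exists k \, \exists d \; \forall n \; |k\cdot 2^{-d} - q_n| \le 2^{-n}$,
is $\Sigma^0_2$ relative to~$\mathbf a$. But you then conclude that it is ``decidable relative to $\mathbf a^{(1)}$''. That is not so: a $\Sigma^0_2$ set relative to~$\mathbf a$ is in general only decidable relative to~$\mathbf a^{(2)}$, not~$\mathbf a^{(1)}$ (compare the $\Pi^0_2$-complete set of indices of total functions, which is not~$\emptyset'$-decidable). The inner part, $\forall n \; |k\cdot 2^{-d} - q_n| \le 2^{-n}$, is $\Pi^0_1$ relative to~$\mathbf a$ and hence decidable from~$\mathbf a^{(1)}$ for each fixed~$(k,d)$; but the unbounded existential over~$(k,d)$ adds another quantifier, and you do not know in advance that a witness exists. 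So as written, your algorithm is of degree~$\mathbf a^{(2)}$, not~$\mathbf a^{(1)}$. Your intuition that ``the jump is there precisely to handle the dyadic case'' is misplaced; that problem is genuinely one level higher.

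The paper's proof avoids the case split entirely. It fixes the canonical (finite, if ambiguous) expansion and decides each bit~$a_k$ directly: $a_k = 0$ if and only if
$q^* < n + \sum_{i=0}^{k-1} a_i\cdot 2^{-i} + 2^{-k}$,
an equivalence that holds in all cases because the canonicalisation rules out the infinite tail of~$1$'s on the right-hand side. The strict inequality $q^* < c$ against a rational~$c$ is equivalent to $\exists l \; q_l + 2^{-l} < c$, a single $\Sigma^0_1$ question relative to~$\mathbf a$, so one query to~$\mathbf a^{(1)}$ decides it outright --- including when $q^* = c$ exactly, in which case no such~$l$ exists and the oracle correctly answers ``no''. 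Thus the jump is spent once per bit to turn a semi-decision into a decision, and no preliminary classification of~$q^*$ is needed. Your observation that the bit-by-bit search runs at degree~$\mathbf a$ when $q^*$ is non-dyadic is correct, but it does not help, because the preliminary test that is supposed to confine you to that case already costs~$\mathbf a^{(2)}$.
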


\begin{proof}
  Let $n + \sum_{i=0}^\infty a_i \cdot 2^{-i}$ be the binary expansion
  of the limit~$q^* \coloneqq  \lim_{n \to\infty} q_n$, i.e., $n\in \Z$ and
  $a_i\in \{0,1\}$; if this expansion is not unique, we prefer the
  finite one.  Suppose that $k\in \N$ and $n, a_0, \dots, a_{k-1}$ are
  already known.  Then, $a_k = 0$ if and only if there is~$l\in \N$
  such that
  \[
  q_l + 2^{-l} < n + \sum_{i=0}^{k-1} a_i \cdot 2^{-i} +
  1 \cdot 2^{-k},
  \]
  which can be decided by using an oracle for the halting problem,
  whence increasing the Turing degree by~$1$.	
\end{proof}

Similarly to the case of effective computability, we have:

\begin{prop}[characterisation of $\mathbf{a}$-left- and right-computability]
  \label{prop:char-left-right-computability}
  Let $\mathbf{a}$ be a Turing degree of functions~$\N\to\Q$ and
  let $r\in \R$. Then the following are equivalent:
  \begin{enumerate}
  \item The number $r$ is $\mathbf{a}$-left-computable
    (resp.\ $\mathbf{a}$-right-computable).
  \item There exists a monotonically increasing (resp.\ decreasing)
    sequence $q:\N\to\Q$ of degree at most $\mathbf{a}$ such that
    $\lim_{n\to\infty} q_n = r$.
  \item The Dedekind cut $\{x\in \Q \mid x<r\}$ (resp.\ $\{x\in \Q
    \mid x>r\}$) is the image of a function $\N\to \Q$ of degree at
    most $\mathbf{a}$.
  \item There exist $k\in \Z$ and a subset~$A\subset \N_{>0}$ that is
  	either empty or
    the image of a function~$\N\to \Q$ of degree at most~$\mathbf{a}$
    (resp.\ such that $\N_{>0} \backslash A$ is the image of
    a function~$\N\to \Q$ of degree at most~$\mathbf{a}$)
    such that
    \[
    r = k + \sum_{n\in A} 2^{-n}.
    \]
  \end{enumerate}
\end{prop}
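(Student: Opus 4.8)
The plan is to prove all four equivalences in the left-computable case and then deduce the right-computable case by applying the result to $-r$: indeed $r$ is $\mathbf a$-right-computable iff $-r$ is $\mathbf a$-left-computable, $\{x\in\Q\mid x>r\}=-\,\{x\in\Q\mid x<-r\}$, "monotonically increasing" turns into "monotonically decreasing", and for item~4 one translates between $A$ and $\N_{>0}\setminus A$ via $1-\sum_{n\in A}2^{-n}=\sum_{n\in\N_{>0}\setminus A}2^{-n}$, adjusting $k$ by~$1$. All operations used below -- taking finite maxima of rationals, comparing rationals, forming finite partial sums -- are of degree at most~$\mathbf a$ relative to the given data, so I will not comment on this again.

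For $1\Leftrightarrow 2$: a monotonically increasing sequence converging to~$r$ has all of its terms $\le r$, so $2\Rightarrow 1$ is immediate; conversely, given $q\colon\N\to\Q$ of degree~$\le\mathbf a$ with $q_n\le r$ and $q_n\to r$, the running maximum $q'_n:=\max\{q_0,\dots,q_n\}$ is monotonically increasing, still lies below~$r$, and converges to $\sup_n q_n=r$. For $2\Rightarrow 3$, fix a computable enumeration $(e_j)_{j\in\N}$ of~$\Q$ and, from a monotone increasing $q$ with $\lim q=r$, define $f(i,j):=e_j$ if $e_j<q_i$ and $f(i,j):=q_0-1$ otherwise; reading $f$ as a function $\N\to\Q$ via a computable bijection $\N\cong\N^2$, its image is exactly $\{x\in\Q\mid x<r\}$, since $q_i\nearrow r$ forces every rational below~$r$ to be hit (and $q_0-1$ already lies in this set). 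For $3\Rightarrow 2$, if $g\colon\N\to\Q$ has image $\{x\in\Q\mid x<r\}$, then once more $q_n:=\max\{g(0),\dots,g(n)\}$ is monotone increasing, lies below~$r$, and has supremum $\sup\{x\in\Q\mid x<r\}=r$.

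The direction $4\Rightarrow 1$ is easy: if $A=\emptyset$ then $r=k\in\Q$ is left-computable, and otherwise, writing $A$ as the image of $h\colon\N\to\N_{>0}$ of degree~$\le\mathbf a$, the sequence $q_m:=k+\sum_{n\in\{h(0),\dots,h(m)\}}2^{-n}$ is monotonically increasing, bounded above by $k+\sum_{n\in A}2^{-n}=r$, and convergent to~$r$. For $1\Rightarrow 4$, set $k:=\lfloor r\rfloor$: if $r\in\Z$ take $A=\emptyset$, and if $r-k$ has a finite binary expansion take $A$ to be the finite set of positions of its nonzero digits (a finite set trivially has the required form). This leaves the case that $r-k$ has a unique infinite binary expansion $\sum_{n\ge 1}a_n2^{-n}$, and the task is to realise $A:=\{n\mid a_n=1\}$ as the image of a degree-$\le\mathbf a$ function, i.e. to enumerate, relative to~$\mathbf a$, the positions of the $1$-digits of~$r$ starting from a monotone increasing rational approximation $q_m\nearrow r$ of degree~$\le\mathbf a$.

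This last step is the one I expect to be delicate, and is the reason this proposition needs more than the argument used for Proposition~\ref{prop:char-computability}: a below-approximation lets one confirm strict lower bounds $r>c$ but never upper bounds, whereas the digit~$a_n$ depends on the exact values of $a_1,\dots,a_{n-1}$, so the "semi-decide $a_n=1$" recipe of the effective case does not carry over verbatim. I would build the enumeration from the increments $p_{m}-p_{m-1}$ of a dyadic refinement of the approximation, decompose each increment into a finite sum of distinct negative powers of~$2$, and feed the resulting exponents into~$A$; the crux is to check that the carry-bookkeeping can be organised so that the associated series still sums to~$r$ (falling back on the dyadic case when necessary). Getting this verification exactly right -- in contrast to the essentially formal checks in the other implications -- is where I would concentrate the effort.
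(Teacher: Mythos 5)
Your handling of $1 \Leftrightarrow 2 \Leftrightarrow 3$, of $4 \Rightarrow 1$, and of the reduction of the right-computable case to the left-computable one via $r \mapsto -r$ is correct and follows the natural route (running maxima, explicit enumeration of the lower Dedekind cut, partial sums).

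The direction $1 \Rightarrow 4$, which you correctly single out as the delicate one, cannot be completed by the carry-bookkeeping you sketch -- in fact, that implication is false as stated. As you yourself observe, for irrational $r$ the representation $r = k + \sum_{n\in A}2^{-n}$ with $A \subset \N_{>0}$ forces $k = \lfloor r\rfloor$ and forces $A$ to be exactly the set of $1$-digit positions of the binary expansion of $r - \lfloor r\rfloor$; there is no freedom left for any reorganisation. But the set of $1$-positions of a left-computable real need not be $\mathbf a$-enumerable. For $\mathbf a = \mathbf 0$, take an r.e.\ undecidable set $K \subset \N_{\ge 1}$ and put
\[
  r \;:=\; \sum_{n \ge 1,\; n \in K} 2^{-2n} \;+\; \sum_{n\ge 1,\; n \notin K} 2^{-2n-1}
  \;=\; \tfrac{1}{6} \;+\; \sum_{n\ge 1,\; n \in K} 2^{-2n-1}.
\]
The second expression shows that $r$ is left-computable (a constant plus an increasing sum over an r.e.\ index set), and $r$ is irrational since otherwise $K$ would be eventually periodic, hence decidable. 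In the first expression the positions $2n$ and $2n+1$ are pairwise distinct, so there are no carries and the binary digits of $r$ are $a_{2n} = 1 \Leftrightarrow n\in K$ and $a_{2n+1} = 1 \Leftrightarrow n\notin K$. Hence $A \cap (2\N+1) = \{\,2n+1 : n\ge 1,\ n\notin K\,\}$; if $A$ were r.e., so would this intersection be, and hence $\N\setminus K$ would be r.e.\ -- a contradiction. So $r$ satisfies item~$1$ but not item~$4$. (Reals of the form $k + \sum_{n\in A}2^{-n}$ with $A$ r.e.\ are the so-called \emph{strongly} left-computable reals, a proper subclass of the left-computable ones.) Note that the paper's own one-line argument -- ``use the same arguments as for Proposition~\ref{prop:char-computability}'' -- breaks down at exactly this point: the proof of $1 \Rightarrow 4$ there relies on the two-sided estimate $|r-q_j|\le 2^{-j}$, which is precisely what one does not have when $r$ is only left-computable. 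Only items~$1$--$3$ and the direction $4\Rightarrow 1$ are actually needed later in the paper, so this is an error in the statement of item~$4$ rather than something to be repaired inside the proof; you should flag it instead of trying to close the gap.
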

\begin{proof}
  One can use the same arguments as for
  Proposition~\ref{prop:char-computability}.
\end{proof}

\begin{cor}[]
  \label{cor:EC-vs-LC-RC}
  Let $\mathbf{a}$ be a Turing degree of functions $\N\to\Q$.
  Then, we have
  \[
  \EC_\mathbf{a} = \LC_\mathbf{a} \cap \RC_\mathbf{a}.
  \]
\end{cor}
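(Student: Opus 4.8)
The plan is to prove the two inclusions $\EC_\mathbf{a} \subseteq \LC_\mathbf{a} \cap \RC_\mathbf{a}$ and $\LC_\mathbf{a} \cap \RC_\mathbf{a} \subseteq \EC_\mathbf{a}$ separately, using the characterisations established in Proposition~\ref{prop:char-computability} and Proposition~\ref{prop:char-left-right-computability}.

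For the forward inclusion, suppose $r\in\EC_\mathbf{a}$, so there is a sequence $q:\N\to\Q$ of degree at most~$\mathbf{a}$ with $|r-q_n|\le 2^{-n}$ for all~$n$. Then the sequence $(q_n - 2^{-n})_{n\in\N}$ is of degree at most~$\mathbf{a}$, converges to~$r$, and satisfies $q_n - 2^{-n}\le r$ for all~$n$; this exhibits $r$ as $\mathbf{a}$-left-computable by Definition~\ref{def:EC}. Symmetrically, $(q_n + 2^{-n})_{n\in\N}$ witnesses $\mathbf{a}$-right-computability. Hence $r\in\LC_\mathbf{a}\cap\RC_\mathbf{a}$.

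For the reverse inclusion, suppose $r\in\LC_\mathbf{a}\cap\RC_\mathbf{a}$. By Proposition~\ref{prop:char-left-right-computability}(2) (applied in both directions), there is a monotonically increasing sequence $\ell:\N\to\Q$ of degree at most~$\mathbf{a}$ with $\lim_n \ell_n = r$ and $\ell_n\le r$, and a monotonically decreasing sequence $u:\N\to\Q$ of degree at most~$\mathbf{a}$ with $\lim_n u_n = r$ and $u_n\ge r$. Then for every~$n$ we have $\ell_n \le r \le u_n$, so $u_n - \ell_n \ge 0$ and $\lim_n (u_n - \ell_n) = 0$. The sequence $\epsilon_n := u_n - \ell_n$ is of degree at most~$\mathbf{a}$ (being a difference of two such sequences) and tends to~$0$, while $|r - \ell_n| \le \epsilon_n$. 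By the equivalence of conditions~(1) and~(2) in Proposition~\ref{prop:char-computability}, this shows $r$ is $\mathbf{a}$-computable, i.e., $r\in\EC_\mathbf{a}$.

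This argument is essentially routine; the only point requiring a little care is the observation that pointwise arithmetic operations (subtraction, and adding the computable sequence $2^{-n}$) preserve the bound on the Turing degree, which is immediate from the standard encoding of~$\Q$ over~$\N$ mentioned after Proposition~\ref{prop:char-left-right-computability}. I do not anticipate a genuine obstacle here; the corollary is a direct packaging of the preceding characterisations.
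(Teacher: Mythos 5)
Your proof is correct and follows the same route as the paper, which simply cites Propositions~\ref{prop:char-computability} and~\ref{prop:char-left-right-computability} without spelling out the argument; you have supplied exactly the details that citation compresses. One small remark: in the reverse direction, the monotonicity from Proposition~\ref{prop:char-left-right-computability}(2) is not actually needed, since Definition~\ref{def:EC} already gives $\ell_n\le r\le u_n$ with both sequences converging to $r$, which is all your $\epsilon_n := u_n-\ell_n$ argument uses.
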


\begin{proof}
  This follows from Propositions~\ref{prop:char-computability}
  and~\ref{prop:char-left-right-computability}.
\end{proof}

\begin{example}\hfil 
  \begin{itemize}
  \item For every Turing degree~$\mathbf{a}$, the sets
    $\EC_\mathbf{a}$, $\LC_\mathbf{a}$ and~$\RC_\mathbf{a}$ are
    countably infinite.
  \item All algebraic numbers are $\mathbf{0}$-computable~\cite[p.~6]{Rettinger-Zheng-compute-real}.
  \item The transcendental numbers~$e$ and $\pi$ are $\mathbf{0}$-computable ~\cite[p.~5]{Rettinger-Zheng-compute-real}.
  \end{itemize}
\end{example}

\begin{prop}[]\label{prop:jump}
  For every Turing degree $\mathbf{a}$, we have
  $\LC_\mathbf{a} \backslash \EC_\mathbf{a} \neq \emptyset$
  and $\RC_\mathbf{a} \backslash \EC_\mathbf{a} \neq \emptyset$.
\end{prop}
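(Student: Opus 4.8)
The plan is to produce, for each Turing degree $\mathbf a$, a single explicit real number lying in $\LC_{\mathbf a}\setminus\EC_{\mathbf a}$; the statement for $\RC_{\mathbf a}$ then follows by applying the construction to $1-r$, using that $\EC_{\mathbf a}$ and the set of irrationals are stable under $x\mapsto 1-x$ and that $1-\sum_{n\in H}2^{-n}=\sum_{n\in\N_{>0}\setminus H}2^{-n}$ (so that the complement of the defining set becomes the image of a function of degree at most $\mathbf a$, as required by the right-computable version of Proposition~\ref{prop:char-left-right-computability}(4)). First I would fix a set $H\subseteq\N_{>0}$ of Turing degree $\mathbf a^{(1)}=j(\mathbf a)$, chosen as a copy in $\N_{>0}$ of the halting problem relative to an oracle of degree $\mathbf a$: such an $H$ is nonempty, is the image of a function of degree at most $\mathbf a$ (it is recursively enumerable in $\mathbf a$), and is \emph{not} of degree at most $\mathbf a$ (in particular it is undecidable), by the defining property of the jump. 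Set
\[
  r:=\sum_{n\in H}2^{-n}\in(0,1].
\]
By Proposition~\ref{prop:char-left-right-computability}(4), applied with $k=0$ and $A=H$, the number $r$ is $\mathbf a$-left-computable.

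The remaining work is to show $r\notin\EC_{\mathbf a}$, and the one mildly delicate point is the interaction of the "$\sum_{n\in A}2^{-n}$"-representations with the non-uniqueness of binary expansions. I would isolate this as an elementary observation: \emph{if $B\subseteq\N_{>0}$ is not cofinite, then the digit sequence $(\chi_B(n))_{n\ge 1}$ is the canonical binary expansion of $\sum_{n\in B}2^{-n}$}, because for every $N$ there is some $n>N$ with $n\notin B$, so the tail $\sum_{n\in B,\ n>N}2^{-n}$ is \emph{strictly} below $2^{-N}$ and the first $N$ digits are forced to equal $\chi_B(1),\dots,\chi_B(N)$. Two consequences: first, $r$ is irrational, since if $r$ were rational then either $H$ is cofinite, or $H$ is not cofinite and $\chi_H$ is the eventually periodic binary expansion of a rational; in both cases $H$ would be decidable, contradicting the choice of $H$. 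Second, for an irrational $x$ the representation $x=k+\sum_{n\in A}2^{-n}$ with $k\in\Z$ and $A\subseteq\N_{>0}$ is unique: $A$ cannot be cofinite (else $x$ would be a dyadic rational), hence $\sum_{n\in A}2^{-n}\in[0,1)$, forcing $k=\lfloor x\rfloor$ and identifying $\chi_A$ with the binary expansion of the fractional part of $x$.

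Now suppose, for a contradiction, that $r\in\EC_{\mathbf a}$. By Proposition~\ref{prop:char-computability}(4) there are $k\in\Z$ and $A\subseteq\N_{>0}$ of degree at most $\mathbf a$ with $r=k+\sum_{n\in A}2^{-n}$. Since $r$ is irrational and $r\in(0,1)$, the uniqueness just established gives $k=0$ and $A=H$ (both $\chi_A$ and $\chi_H$ equalling the binary expansion of $r$). Hence $H$ is of degree at most $\mathbf a$, contradicting the choice of $H$. Therefore $r\in\LC_{\mathbf a}\setminus\EC_{\mathbf a}$, and the right-computable case follows as indicated above. I expect the only genuine obstacle to be the careful handling of the cofinite and dyadic corner cases in this binary-expansion bookkeeping; all of the computability-theoretic content is carried by the single input $\mathbf a^{(1)}>_T\mathbf a$.
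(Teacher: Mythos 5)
Your proof is correct and follows essentially the same route as the paper: pick a set $H$ of degree $\mathbf a^{(1)}$ that is r.e.\ in $\mathbf a$ (the paper uses the jump $f^*$ of a representative $f$ of $\mathbf a$ directly), form $r=\sum_{n\in H}2^{-n}$, read off $\mathbf a$-left-computability from Proposition~\ref{prop:char-left-right-computability}, and rule out $\mathbf a$-computability via Proposition~\ref{prop:char-computability} plus the fact that the jump strictly exceeds $\mathbf a$. The only differences are that you spell out the binary-expansion bookkeeping (non-cofiniteness, irrationality, uniqueness of the digit representation) that the paper's proof leaves implicit, and that you pass to $1-r$ instead of $-x$ for the right-computable half; both are immaterial.
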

\begin{proof}
  Let $f:\N\to\{0,1\}$ be a function such that $\mathbf{a} = \deg (f)$.
  We set
  \[
  x\coloneqq  \sum_{n\in \N} f^*(n) \cdot 2^{-n} \in [0,2],
  \]
  where $f^*:\N \to\{0,1\}$ is the \emph{jump} of~$f$. The jump of~$f$
  is defined as follows: Fix a Gödel numbering of all
  algorithms. Then, we set~$f^*(n) \coloneqq  1$ if the $n$-th algorithm
  halts on input~$n$ with oracle~$f$, and $f^*(n)\coloneqq 0$
  otherwise~\cite[p.~633]{Simpson1977}.
  
  We have $x\in \LC_\mathbf{a}$: Indeed, the set~$\{n\in \N \mid
  f^*(n) = 1\}$ is semi-decided with oracle~$f$ by the following
  algorithm: For~$n\in \N$, simulate the $n$-th algorithm on input~$n$
  with oracle~$f$. Once this simulation terminates, accept.
	
  Moreover, we have $x\not\in \EC_\mathbf{a}$: Assume for a
  contradiction that $x\in \EC_\mathbf{a}$. Then, by the
  characterisation of Proposition~\ref{prop:char-computability}, we
  have that the set~$\{n\in \N \mid f^*(n) = 1\}$, whence $f$, is
  $\mathbf{a}$-computable. Therefore, $f^*\le_T f$, contradicting an
  elementary property of the jump
  operator~\cite[Proposition~1.1(iv)]{Simpson1977}.

  Similarly, one can show that $-x\in \RC_\mathbf{a} \backslash
  \EC_\mathbf{a}$.
\end{proof}

\subsection{Computability of limits}

We investigate the computability of (iterated) limits.  We will use
these results in Section~\ref{sec:computation-L2-Betti-of-groups} to
prove Theorem~\ref{thm:main:algoL2B}.

\begin{lemma}[from limits to effective limits] 
  \label{lemma:algo-limits-to-effective-limits}
  Let $\mathbf{a}$ be a Turing degree.  There is an algorithm of
  degree at most~$\mathbf{a}^{(1)}$ that
  \begin{itemize}
  \item given a sequence~$q:\N\to\Q$ of 
    degree at most~$\mathbf{a}$ that is convergent in~$\R$,
  \item outputs a subsequence $r:\N\to\Q$ of $q$
    that converges effectively to~$q^* \coloneqq  \lim_{n\to\infty} q_n$,
    i.e., for all~$n\in \N$, we have
    \[
    |r_n - q^* | \le 2^{-n}.
    \]
  \end{itemize}
\end{lemma}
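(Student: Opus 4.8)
The plan is to exploit the fact that, although we do not know the \emph{rate} of convergence of~$q$, we do know that~$q$ converges, so for every precision~$2^{-n}$ there is \emph{some} index past which all terms are within~$2^{-n}$ of the limit; the only obstacle is \emph{detecting} such an index, and this is exactly what an oracle of degree~$\mathbf{a}^{(1)}$ can do. First I would observe that, by the Cauchy criterion, $q$ converges if and only if
\[
\forall_{k\in \N}\ \exists_{N\in \N}\ \forall_{i,j\ge N}\ |q_i - q_j| \le 2^{-k}.
\]
For fixed~$k$, the predicate $P_k(N) :\equiv \forall_{i,j\ge N}\ |q_i - q_j| \le 2^{-k}$ is a $\Pi^0_1$-predicate \emph{relative to~$\mathbf{a}$} (a co-recursively-enumerable condition: search for a violating pair~$(i,j)$ using the oracle for~$q$), hence decidable by an oracle of degree~$\mathbf{a}^{(1)}$. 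Since~$q$ is assumed convergent, for each~$k$ there exists an~$N$ with~$P_k(N)$; so the algorithm of degree~$\mathbf{a}^{(1)}$ can, on input~$n$, search for the least~$N_n\in\N$ with $P_{n+1}(N_n)$, and this search is guaranteed to terminate.

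Having found such~$N_n$, I would set $r_n := q_{N_n}$. To see that this works: for all $i,j \ge N_n$ we have $|q_i - q_j| \le 2^{-(n+1)}$; letting $j\to\infty$ gives $|q_i - q^*| \le 2^{-(n+1)} \le 2^{-n}$ for all $i \ge N_n$, in particular $|r_n - q^*| = |q_{N_n} - q^*| \le 2^{-n}$, as required. To make~$r$ a genuine \emph{subsequence} of~$q$ (rather than merely a reindexing that might repeat or decrease), I would replace the raw choice~$N_n$ by $\widetilde N_n := \max(N_0, N_1, \dots, N_n)$, which is still an index satisfying~$P_{n+1}$ (the predicate~$P_k$ is monotone in~$N$: if it holds at~$N$ it holds at any $N' \ge N$) and is nondecreasing in~$n$; if strict monotonicity of the extracted indices is wanted one takes $\widetilde N_n := \max(N_0,\dots,N_n) + n$ instead, which again still satisfies~$P_{n+1}$. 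The whole procedure uses only the oracle for~$q$ (degree~$\mathbf{a}$) together with the halting oracle relative to that (degree~$\mathbf{a}^{(1)}$), so the algorithm has degree at most~$\mathbf{a}^{(1)}$.

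\textbf{Main obstacle.}
The one delicate point is arguing that querying~$P_k(N)$ costs exactly one jump and no more: $P_k(N)$ asks whether a certain~$\mathbf{a}$-recursive search (for a violating pair) \emph{fails to halt}, so its negation is $\mathbf{a}$-r.e.\ and~$P_k(N)$ itself is decided by one application of the jump~$\mathbf{a}^{(1)}$; the outer unbounded search over~$N$ for the least~$N$ with $P_{n+1}(N)$ is then an ordinary (terminating, by convergence of~$q$) search relative to the oracle~$\mathbf{a}^{(1)}$, so it does not add a further jump. I expect the write-up to spend most of its effort on this bookkeeping and on making precise the sense in which the search is guaranteed to terminate, the underlying analytic estimate being the one-line Cauchy-to-limit passage above.
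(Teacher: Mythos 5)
Your proposal is correct and takes essentially the same approach as the paper: encode the Cauchy property as a predicate that is $\Pi^0_1$ relative to $\mathbf{a}$, decide it with the jump oracle of degree $\mathbf{a}^{(1)}$, and then search for witnessing indices, a search that terminates because $q$ converges. The only cosmetic differences are that the paper's inner algorithm compares tail terms $q_n$ against a fixed anchor $q_N$ rather than all pairs $i,j \ge N$, and that you additionally enforce strict monotonicity of the extracted indices so that $r$ is a subsequence in the strict sense, a bookkeeping point the paper leaves implicit.
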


\begin{proof}
  Let $q:\N\to\Q$ be a convergent sequence of degree at
  most~$\mathbf{a}$ and let $q^* \coloneqq  \lim_{n\to\infty} q_n \in \R$ be
  the limit. 
  As $q$ converges, it is Cauchy, i.e., 
  \[
  \fa{m\in\N} \exi{N\in \N} \fa{n\ge N} |q_N - q_n| \le 2^{-(m+1)}.
  \]
  If we have a subsequence $(q_{N(m)})_{m\in \N}$ with Cauchy indices
  as above, we also have
  \[
  \fa{m\in\N}  |q_{N(m)}-q^*| \le 2^{-m},
  \]
  It thus remains to show that we can produce such a sequence
  algorithmically.  We consider the following algorithm~$T$ of degree
  at most~$\mathbf{a}$:
  \begin{itemize}
  \item On input $m,N\in \N$,
  \item for $n= N, N+1, \dots$, do
    \begin{itemize}
    \item if $|q_n-q_N| > 2^{-m}$, then halt;
    \item otherwise continue.
    \end{itemize}
  \end{itemize}
  We then consider the following algorithm, which uses the halting
  problem of~$T$ as oracle, and thus is of degree at
  most~$\mathbf{a}^{(1)}$:
  \begin{itemize}
  \item On input $m\in \N$,
  \item for $N=0,1,2, \dots$, do:
    \begin{itemize}
    \item use the oracle to decide if $T$ halts on input~$(m,N)$;
    \item if this is \emph{not} the case, halt and return~$q_N$.
    \end{itemize}
  \end{itemize}
  Then, by construction, this second algorithm halts because the 
  sequence~$q$ converges.
  This is an algorithm of degree at most~$\mathbf a^{(1)}$
  with the desired properties.
\end{proof}

\begin{lemma}[from double limits to single limits]
  \label{lemma:algo-double-limits-to-limits}
  Let $\mathbf{a}$ be a Turing degree. 
  There is an algorithm of degree at most~$\mathbf{a}^{(1)}$ that 
  \begin{itemize}
  \item given a function~$q: \N \times \N \to \Q$
    of degree at most~$\mathbf{a}$ such that 
    $
    \lim_{i\to\infty} \lim_{j\to\infty} q(i,j)
    $
    exists in~$\R$ or is equal to~$+\infty$,
    and additionally $\lim_{j\to\infty} q(i,j) \in \R$ for all $i\in \N$, 
  \item outputs a sequence~$r: \N \to \Q$ such that
    \[
    \lim_{n\to\infty} r_n = \lim_{i\to\infty} \lim_{j\to\infty} q(i,j).
    \]
  \end{itemize}
\end{lemma}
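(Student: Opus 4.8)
The plan is to first replace each inner sequence by an effectively convergent one, uniformly in the outer index, and then diagonalise. Write $q_i^* := \lim_{j\to\infty} q(i,j) \in \R$ for the inner limits, which exist by hypothesis, and $L := \lim_{i\to\infty} q_i^* \in \R \cup \{+\infty\}$ for the outer limit.

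Step~1 (uniform effective approximation of the inner limits). I would rerun the argument of Lemma~\ref{lemma:algo-limits-to-effective-limits} with the outer index~$i$ carried along as an extra parameter. Concretely, consider the algorithm of degree at most~$\mathbf{a}$ that, on input $(i,m,N)\in\N^3$, searches for some $n\ge N$ with $|q(i,n)-q(i,N)| > 2^{-m}$ and halts as soon as it finds one. Using an oracle for the halting problem of this single, parametrised algorithm — which is of degree at most~$\mathbf{a}^{(1)}$ — one obtains an algorithm of degree at most~$\mathbf{a}^{(1)}$ that, on input $(i,m)$, finds the least $N = N(i,m)$ on which this search does \emph{not} halt and returns $s(i,m) := q(i,N(i,m)) \in \Q$. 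Such an~$N$ exists because $j\mapsto q(i,j)$ is Cauchy (it converges in~$\R$), and for that~$N$ we have $|q(i,n)-q(i,N)| \le 2^{-m}$ for all $n\ge N$, hence $|s(i,m) - q_i^*| \le 2^{-m}$. Thus $s:\N\times\N\to\Q$ is a total function of degree at most~$\mathbf{a}^{(1)}$ with $|s(i,m) - q_i^*| \le 2^{-m}$ for all $i,m\in\N$.

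Step~2 (diagonalise). Put $r_n := s(n,n)$; this defines a sequence $r:\N\to\Q$ of degree at most~$\mathbf{a}^{(1)}$. It remains to check that $\lim_{n\to\infty} r_n = L$. If $L\in\R$: given $\varepsilon > 0$, choose $i_0$ with $|q_i^* - L| < \varepsilon/2$ for all $i\ge i_0$ and $n_0 \ge i_0$ with $2^{-n_0} < \varepsilon/2$; then for $n \ge n_0$ we get $|r_n - L| \le |s(n,n) - q_n^*| + |q_n^* - L| \le 2^{-n} + \varepsilon/2 < \varepsilon$. If $L = +\infty$: given $M > 0$, choose $i_0$ with $q_i^* > M+1$ for all $i\ge i_0$; then for $n\ge i_0$ we get $r_n = s(n,n) \ge q_n^* - 2^{-n} \ge (M+1) - 1 = M$. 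In either case $\lim_{n\to\infty} r_n = L$, as required.

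The only point that needs genuine care is that a single oracle of degree~$\mathbf{a}^{(1)}$ suffices, i.e.\ that Step~1 is truly uniform in~$i$, so that one does not accidentally invoke the jump of a jump; this is exactly why the search above is packaged as one algorithm with $i$ among its inputs. The hypothesis that \emph{every} inner limit is a genuine real number (not~$+\infty$) is what makes the searches in Step~1 terminate, while the weaker hypothesis allowing $L = +\infty$ is absorbed by the case distinction in Step~2; no rate of convergence for the outer limit is needed anywhere.
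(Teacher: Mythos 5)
Your proof is correct and follows the same route as the paper: make the inner limits effectively convergent uniformly in the outer parameter at degree~$\mathbf{a}^{(1)}$, then diagonalise. The paper simply invokes Lemma~\ref{lemma:algo-limits-to-effective-limits} ``for all~$i$'' and diagonalises, leaving implicit the uniformity in~$i$ that you spell out in Step~1, and also leaving implicit the verification (including the $L=+\infty$ case) that you carry out in Step~2.
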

\begin{proof}
  Let $q: \N \times \N \to \Q$ as above. By applying the algorithm of
  Lemma~\ref{lemma:algo-limits-to-effective-limits} to the
  sequences~$q(i, \cdot)$ for all~$i\in \N$, we obtain a function~$\widetilde q:
  \N \times \N \to \Q$ of degree at most~$\mathbf{a}^{(1)}$. Then, the
  diagonalisation~$r(n) \coloneqq  \widetilde q (n,n)$ is also of degree at
  most~$\mathbf{a}^{(1)}$ and satisfies the desired property
  $
  \lim_{n\to\infty} r_n = \lim_{i\to\infty} \lim_{j\to\infty} q(i,j).
  $
\end{proof}

\begin{lemma}[divergence to $+\infty$]
  \label{lemma:algo:divergence}
  Let $\mathbf{a}$ be a Turing degree. There is an algorithm of degree
  at most~$\mathbf{a}^{(2)}$ that
  \begin{itemize}
  \item given a function~$q : \N \to \Q$ of degree at
    most~$\mathbf{a}$ such that $\lim_{k\to\infty} q_k$ exists in~$\R$
    or is equal to~$+\infty$,
  \item decides if $\lim_{k\to\infty} q_k = +\infty$.
  \end{itemize}
\end{lemma}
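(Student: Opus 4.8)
The plan is to reduce the problem to deciding an arithmetically simple predicate relative to an oracle of degree~$\mathbf a$, and to decide it by two applications of the jump operator. First I would record the relevant characterisation. By hypothesis, $q^\ast := \lim_{k\to\infty} q_k$ exists in~$\R$ or equals~$+\infty$. A convergent sequence of real numbers is bounded, and a sequence that is unbounded above cannot converge to a finite limit; hence under this hypothesis
\[
  q^\ast = +\infty
  \quad\Longleftrightarrow\quad
  (q_k)_{k\in\N}\ \text{is unbounded above}
  \quad\Longleftrightarrow\quad
  \fa{M\in\N} \exi{k\in\N} q_k > M .
\]
So it suffices to decide, with an oracle of degree~$\mathbf a^{(2)}$, whether the predicate $P(M) :\Leftrightarrow \bigl(\exists_{k\in\N}\; q_k > M\bigr)$ holds for all~$M\in\N$.

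Next I would build the decision procedure in two steps, each raising the Turing degree by~$1$, as in the proof of Lemma~\ref{lemma:algo-limits-to-effective-limits}. Consider the algorithm~$T$ that, on input~$M\in\N$, computes $q_0, q_1, q_2, \dots$ (possible since $q$ has degree at most~$\mathbf a$) and halts as soon as it finds some~$k$ with $q_k > M$; this algorithm has degree at most~$\mathbf a$ and halts on input~$M$ if and only if $P(M)$ holds. Using an oracle for the halting problem of~$T$, of degree at most~$\mathbf a^{(1)}$, we obtain an algorithm~$T_1$ of degree at most~$\mathbf a^{(1)}$ that, given~$M$, \emph{decides} whether $P(M)$ holds. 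Now let $T_2$ be the algorithm of degree at most~$\mathbf a^{(1)}$ that, on trivial input, runs~$T_1$ on $M = 0, 1, 2, \dots$ in turn and halts as soon as it finds some~$M$ with~$\neg P(M)$; thus $T_2$ halts if and only if $q^\ast \neq +\infty$. Finally, using an oracle for the halting problem of~$T_2$, of degree at most~$\mathbf a^{(2)}$, we can decide whether $T_2$ halts, and hence output whether $q^\ast = +\infty$. Since in addition $q$ itself has degree at most~$\mathbf a \le \mathbf a^{(2)}$, the resulting algorithm has degree at most~$\mathbf a^{(2)}$, as claimed.

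I expect the only genuine subtlety to be the first displayed equivalence: it is precisely the promise that $\lim_{k\to\infty} q_k$ exists in~$\R \cup \{+\infty\}$ that allows replacing the $\Pi_3$-looking statement ``$q^\ast = +\infty$'' by the $\Pi_2$-statement ``$(q_k)_k$ is unbounded above'', since without this promise a sequence could be unbounded above while oscillating and thus not diverging to~$+\infty$. Everything else is routine bookkeeping of the two jumps, and I would keep the exposition at that level.
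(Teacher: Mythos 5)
Your proof is correct and takes essentially the same approach as the paper: both reduce divergence to the $\Pi_2$ predicate ``unbounded above'' (valid only under the promise that the limit exists in $\R\cup\{+\infty\}$) and then spend one jump per quantifier alternation. The paper merely states the two-jump observation in one line; your explicit construction of $T$, $T_1$, $T_2$ is just a fuller writeup of the same argument.
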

\begin{proof}
  Given the present assumptions, divergence to $+\infty$ is equivalent
  to (positive) unboundedness of the sequence, i.e., to 
  \[
  \fa{K\in \N} \exi{k\in \N} q_k > K,
  \]
  which can be checked by an algorithm of degree at most~$\mathbf{a}^{(2)}$.
\end{proof}

\begin{lemma}[iterated limits]
  \label{lemma:algo-iterated-limits-2}
  Let $\mathbf{a}$ be a Turing degree and $k\in \N$.  There is an
  algorithm of degree at most~$\mathbf{a}^{(k+1)}$ that
  \begin{itemize}
  \item given a function~$q : \N^k \to \Q$ of degree at most~$\mathbf{a}$ such that
    \[
    \lim_{i_1 \to\infty} \dotsm \lim_{i_k\to\infty} q(i_1, \dots,  i_k) 
    \]
    exists in~$\R$, or is equal to~$+\infty$, and all of the `inner' limits
    exist in $\R$,
  \item outputs either the binary expansion of this limit if it exists
    in~$\R$ or detects that the limit is equal to $+\infty$.
  \end{itemize}
\end{lemma}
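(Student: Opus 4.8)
The plan is to induct on $k$, using the lemmas just established as the inductive machinery for collapsing the tower of limits one level at a time. For $k=1$, the hypothesis says $q\colon \N\to\Q$ has degree at most $\mathbf a$ and $\lim_{i_1\to\infty} q(i_1)$ exists in $\R$ or equals $+\infty$. First I would run the algorithm of Lemma~\ref{lemma:algo:divergence} (degree at most $\mathbf a^{(2)}$) to decide whether the limit is $+\infty$; if so, output that verdict. Otherwise the limit lies in $\R$, so I apply Lemma~\ref{lemma:algo-limits-to-effective-limits} (degree at most $\mathbf a^{(1)}$) to turn $q$ into an effectively convergent sequence, and then Lemma~\ref{lemma:effective-conv-to-binary} (degree at most $\mathbf a^{(1)}$, applied with base degree $\mathbf a^{(1)}$, hence $\mathbf a^{(2)}$) to extract the binary expansion. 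All of this is subsumed under degree $\mathbf a^{(2)} = \mathbf a^{(k+1)}$, establishing the base case.

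For the inductive step, assume the statement for $k-1$ and let $q\colon \N^k\to\Q$ be as in the hypothesis. The idea is to first replace the two innermost limits by a single limit using Lemma~\ref{lemma:algo-double-limits-to-limits}: fixing the outer coordinates $i_1,\dots,i_{k-2}$, the function $(i_{k-1},i_k)\mapsto q(i_1,\dots,i_k)$ satisfies exactly the hypotheses of that lemma (the iterated inner limit exists in $\R$ or is $+\infty$, and each $\lim_{i_k\to\infty} q(i_1,\dots,i_k)$ exists in $\R$). Running that algorithm uniformly over all $(i_1,\dots,i_{k-2})$ produces a function $\widetilde q\colon \N^{k-1}\to\Q$ of degree at most $\mathbf a^{(1)}$ with
\[
\lim_{i_{k-1}\to\infty}\widetilde q(i_1,\dots,i_{k-1}) = \lim_{i_{k-1}\to\infty}\lim_{i_k\to\infty} q(i_1,\dots,i_k)
\]
for each choice of outer indices, and with all the remaining inner limits of $\widetilde q$ still existing in $\R$ and the full iterated limit of $\widetilde q$ agreeing with that of $q$. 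Now apply the inductive hypothesis to $\widetilde q$ with base Turing degree $\mathbf a^{(1)}$: this gives an algorithm of degree at most $(\mathbf a^{(1)})^{(k)} = \mathbf a^{(k+1)}$ that outputs the binary expansion of the iterated limit of $\widetilde q$, or detects that it is $+\infty$ — which is exactly what we want for $q$.

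The main obstacle I anticipate is bookkeeping rather than a genuine mathematical difficulty: one must check carefully that the hypotheses propagate correctly through Lemma~\ref{lemma:algo-double-limits-to-limits}, namely that $\widetilde q$ inherits the property that \emph{all} its inner iterated limits exist in $\R$ (not just the outermost one) and that its full iterated limit is unchanged — this relies on the fact that $\lim_{i_{k-1}\to\infty}\lim_{i_k\to\infty} q = \lim_{i_{k-1}\to\infty}\widetilde q(\cdots,i_{k-1})$ pointwise in the outer variables, so replacing the pair of limits by one limit commutes with taking the further outer limits. A second point to be careful about is the arithmetic of jumps: reducing from $k$ to $k-1$ levels costs exactly one jump (from Lemma~\ref{lemma:algo-double-limits-to-limits}), and then the inductive hypothesis applied over base degree $\mathbf a^{(1)}$ contributes $k$ further jumps, for a total of $k+1$, matching the claimed bound $\mathbf a^{(k+1)}$; the base case $k=1$ absorbs the divergence test into the same bound since $\mathbf a^{(2)}$ is what Lemma~\ref{lemma:algo:divergence} already requires.
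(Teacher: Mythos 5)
Your argument is correct and is essentially the paper's proof: the paper also collapses the inner pairs of limits by iterating Lemma~\ref{lemma:algo-double-limits-to-limits} at increasing degrees $\mathbf a, \mathbf a^{(1)},\dots,\mathbf a^{(k-2)}$ down to a single sequence of degree $\mathbf a^{(k-1)}$, then applies Lemma~\ref{lemma:algo:divergence} to test for $+\infty$ and Lemmas~\ref{lemma:algo-limits-to-effective-limits} and~\ref{lemma:effective-conv-to-binary} to produce the binary expansion, arriving at the same bound $\mathbf a^{(k+1)}$. You have merely phrased the iteration as an explicit induction on $k$, which is equivalent; your bookkeeping of jumps and verification that $\widetilde q$ inherits the hypotheses match the paper's reasoning.
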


\begin{proof}
  Using an iterated application of the algorithm of
  Lemma~\ref{lemma:algo-double-limits-to-limits} (applied to the
  degrees~$\mathbf{a}, \mathbf{a}^{(1)}, \dots, \mathbf{a}^{(k-2)}$), we
  obtain an algorithm of degree~$\mathbf{a}^{(k-1)}$ that produces a
  sequence~$r:\N \to \Q$ converging to the same limit.  We can then
  check with the algorithm from Lemma~\ref{lemma:algo:divergence}
  (applied to the degree~$\mathbf{a}^{(k-1)}$) if this limit is equal
  to~$+\infty$. This combined algorithm is therefore of degree at
  most~$\mathbf{a}^{(k-1+2)}$.  If the limit is finite, we apply the
  algorithms of Lemma~\ref{lemma:algo-limits-to-effective-limits}
  (applied to degree~$\mathbf{a}^{(k-1)}$) and
  Lemma~\ref{lemma:effective-conv-to-binary} (applied to
  degree~$\mathbf{a}^{(k)}$) to obtain the binary expansion of the
  limit. This part of the algorithm is therefore also of degree at
  most~$\mathbf{a}^{(k-1+2)}$.
\end{proof}

\subsection{Presenting elements and matrices over the group ring}
\label{sec:presenting-matrices}

We explain how matrices over group rings can be represented
in an algorithmic setting: 
Let $G$ be a finitely generated group with a finite generating 
set $S$. Let $R= \Z$ or $R=\Q$.
We then write~$R[S^*]$ for the set of all $R$-linear
combinations of words over~$S \cup S^{-1}$. Thus, elements
in~$R[S^*]$ represent elements in the group ring~$R[G]$.
Similarly, for~$m, n\in \N$, we write~$M_{m \times n}(R[S^*])$
for the set of $m \times n$-matrices over~$R[S^*]$. Elements
of~$M_{m \times n}(R[S^*])$ hence represent $m\times n$-matrices
over the group ring~$R[G]$.

\subsection{The word problem and computability of traces over group rings}

The word problem of a group is the following problem: For a word in 
terms of generators of the group, decide whether this word represents
the trivial element. 
More formally: 

\begin{defi}[word problem]
  Let $G$ be a finitely generated group with finite generating set~$S$.
  Denote by $S^*$ the set of all words in~$S$ and the (formal) inverses of~$S$.
  Then, we define the \emph{word problem} of~$G$ (with respect to~$S$) to be
  the set~$
  \{w\in S^* \mid w=e \text{ in } G\}.
  $
\end{defi}

\begin{remark}[]
  The degree of the word problem of a finitely generated group does
  \emph{not} depend on the chosen finite generating set~\cite[Lemma~2.2]{Miller1992}.

  We say that a finitely generated group \emph{has a solvable word
    problem} if its word problem is decidable.  This is the case if
  and only if the word problem is of degree~$\mathbf{0} = \deg
  (\emptyset)$.
\end{remark}

We observe that computing traces for matrices over the group ring is as
hard as solving the word problem. We will use this later to deduce
results on the computability degree of $L^2$-Betti numbers.

\begin{prop}
  \label{prop:word-problem-vs-trace}
  Let $G$ be a finitely generated group with finite generating
  set~$S\subset G$. Then, the following problem is Turing-equivalent to
  the word problem of~$G$:
  \begin{itemize}
  \item
    Given~$n\in\N$ and a matrix~$A\in M_{n\times n}(\Q[S^*])$,
  \item
    compute~$\trCG (A)\in \Q$.
  \end{itemize}
\end{prop}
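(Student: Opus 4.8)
The plan is to prove the Turing equivalence by establishing both reductions separately, each time exhibiting an explicit oracle algorithm.

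\medskip

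\textbf{Word problem reduces to trace computation.} First I would show that, given an oracle for the trace-computation problem, one can decide the word problem. Let $w \in S^*$ be a word; form the $1 \times 1$ matrix $A := (w) \in M_{1 \times 1}(\Q[S^*])$, whose image in $\Q[G]$ is the single group element represented by $w$. By Definition~\ref{def:trace}, $\trCG(A) = (w)_e$, the coefficient of the neutral element, which equals $1$ if $w = e$ in $G$ and $0$ otherwise. So calling the trace oracle on this input and comparing the result to $1$ decides whether $w \in \{w \in S^* \mid w = e \text{ in } G\}$. This direction is essentially immediate.

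\medskip

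\textbf{Trace computation reduces to the word problem.} Conversely, I would describe an algorithm that, with an oracle for the word problem of $G$, computes $\trCG(A)$ for any $A \in M_{n \times n}(\Q[S^*])$. Given such an $A$, each diagonal entry $A_{ii}$ is a finite $\Q$-linear combination $\sum_{k} \lambda_k \cdot w_k$ of words $w_k \in S^*$, and this data is literally part of the (finite) input. By Definition~\ref{def:trace}, $\trCG(A) = \sum_{i=1}^n (A_{ii})_e$, and $(A_{ii})_e = \sum_{k : w_k = e \text{ in } G} \lambda_k$. So the algorithm iterates over all $i$ and over all terms $\lambda_k \cdot w_k$ appearing in $A_{ii}$, queries the word-problem oracle on each $w_k$ to test whether $w_k = e$ in $G$, accumulates $\lambda_k$ into a running rational sum whenever the answer is yes, and finally outputs that sum. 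This manipulates only finitely many rationals and makes finitely many oracle calls, so it is an algorithm of the claimed Turing degree.

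\medskip

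\textbf{Main obstacle.} Neither direction involves a genuine mathematical difficulty; the content is purely bookkeeping about how the group-ring trace is defined in terms of coefficients of the neutral element. The one point that deserves a sentence of care is that distinct words in $S^*$ may represent the same element of $G$, so when the algorithm sums coefficients it must (correctly) add together the contributions of all words that evaluate to $e$ — which is exactly why the word-problem oracle, rather than a mere syntactic check, is needed. I would also remark that the reduction in the first direction uses only a single oracle query on a $1 \times 1$ matrix, which is what makes the equivalence tight; no uniformity issues arise because the generating set $S$ is fixed and finite, and changing it only affects the degree of the word problem by a computable translation (as noted after the definition of the word problem).
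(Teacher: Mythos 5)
Your proof is correct and takes essentially the same approach as the paper's: the forward reduction tests $\trCG(1\cdot w)=1$ on a $1\times 1$ matrix, and the reverse reduction collects the words on the diagonal, queries the word-problem oracle on each, and sums the coefficients of those that represent~$e$. Your explicit remark that distinct words may represent the same element (and that this is precisely why a genuine oracle, not a syntactic check, is needed) is a small clarification of the paper's phrasing but does not alter the argument.
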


\begin{proof}
  The word problem can be reduced to computing traces: For a 
  word $w\in S^*$ it is equivalent to decide whether $w=e$ in $G$
  and to decide whether $\trCG(1\cdot w) = 1$.
	
  Conversely, we can reduce the computation of traces to the word
  problem: Given $n\in\N$ and a matrix~$A\in M_{n\times
    n}(\Q [S^*])$, we proceed in the following steps:
  \begin{itemize}
  \item We collect all the words on the diagonal of~$A$ that have a
    non-trivial coefficient.
  \item For all of these words, we check if they represent $e\in G$
    using an oracle for the word problem of~$G$.
  \item We sum all the coefficients belonging to elements that 
    represent~$e\in G$.
  \item This sum then is the trace of~$A$.
    \qedhere
  \end{itemize}
\end{proof}

\section{\texorpdfstring{Right-/Left-computability of $L^2$-Betti numbers}%
                        {Right-/Left-computability of L2-Betti numbers}}
\label{sec:RCLC}

In this section, we present sufficient conditions for right- and
left-computability of $L^2$-Betti numbers over finitely generated
groups.

\subsection{Algorithmic computation of dimensions of kernels}

Right-computability is based on the following consequence of the
descriptions of $L^2$-Betti numbers through characteristic
sequences (Section~\ref{subsec:charseq}). We use presentations of matrices as described in
Section~\ref{sec:presenting-matrices}.

\begin{lemma}\label{lem:dimkeralgo}
  Let $\mathbf a$ be a Turing degree.
  There is an algorithm of Turing degree at most~$\mathbf a$
  that 
  \begin{itemize}
  \item given a finitely generated group~$G$ and a finite generating
    set~$S$ of~$G$, together with an algorithm of degree at
    most~$\mathbf{a}$ solving the word problem for~$G$ with respect to~$S$,
    given~$m,n \in \N$, and $A \in M_{m
      \times n}(\Z[S^*])$,
  \item computes a monotone decreasing sequence~$\N \to \Q$
    that converges to~$\ltm AG$.
  \end{itemize}
\end{lemma}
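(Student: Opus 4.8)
The plan is to use the characteristic-sequence description of $L^2$-Betti numbers from Proposition~\ref{prop:char-seq}. Given $A \in M_{m\times n}(\Z[S^*])$, we first need a number $K \ge \|\ltmul A\|$ that we can write down effectively. A crude choice works: since $A$ has finitely many matrix entries, each a finite $\Z$-linear combination of words, the operator norm is bounded by, say, $K := m \cdot n \cdot (\text{max absolute coefficient}) \cdot (\text{max word-length count})$, or more simply by the sum of the $\ell^1$-norms of all entries; any such explicit bound is computable from the finite data of $A$ alone (no oracle needed), using that left/right multiplication by a group element is an isometry of $\ell^2 G$ and the triangle inequality. So we may compute some rational $K \ge \|\ltmul A\|$.

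Next, I would form the matrix $B := 1 - K^{-2}\cdot A A^* \in M_{m\times m}(\Q[S^*])$. This is again an explicit matrix over $\Q[S^*]$: the product $A A^*$ is computed by the usual matrix-multiplication formula, multiplying and concatenating words symbolically and applying the elementwise involution $w \mapsto w^{-1}$ to the entries of $A^*$, which is purely formal and requires no oracle. Then for each $p \in \N$ I compute the power $B^p \in M_{m\times m}(\Q[S^*])$ symbolically — again a finite formal computation — and apply the trace algorithm of Proposition~\ref{prop:word-problem-vs-trace}, which is Turing-equivalent to the word problem and hence available at degree $\mathbf a$ using the supplied oracle. This yields $c(A,K)_p = \trCG(B^p) \in \Q$ for each $p$. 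The resulting function $\N \to \Q$, $p \mapsto c(A,K)_p$, is then of degree at most $\mathbf a$.

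Finally, Proposition~\ref{prop:char-seq} tells us precisely that $(c(A,K)_p)_{p\in\N}$ is monotone decreasing, consists of nonnegative reals, and converges to $\ltm A G$. So the algorithm just described — compute $K$, compute $B$, and output the sequence $p \mapsto \trCG(B^p)$ — is the desired algorithm of degree at most $\mathbf a$ producing a monotone decreasing rational sequence converging to $\ltm A G$.

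The main point requiring a little care is simply that every operation along the way (computing a norm bound $K$, forming $AA^*$, taking powers, and most importantly extracting the trace) can be carried out at degree $\mathbf a$; the only step that genuinely uses the oracle is the trace computation, and that is handled by Proposition~\ref{prop:word-problem-vs-trace}. There is no real obstacle here — it is a matter of assembling the pieces, with the characteristic-sequence machinery doing all the analytic work. One should note that the sequence $c(A,K)_p$ takes values in $\Q$ because $K^2$ is rational and the entries of $A$ are integral, so $B^p$ has rational coefficients and its trace is a finite rational sum.
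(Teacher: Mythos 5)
Your proposal is correct and takes essentially the same route as the paper: compute a rational bound $K \ge \|\ltmul A\|$ as the sum of absolute values of all coefficients in $A$, form the characteristic sequence $c(A,K)_p = \trCG\bigl((1-K^{-2}AA^*)^p\bigr)$ by symbolic matrix algebra over $\Q[S^*]$ and trace extraction via Proposition~\ref{prop:word-problem-vs-trace}, and invoke Proposition~\ref{prop:char-seq} for monotonicity and convergence to $\ltm AG$. The only oracle use is in the trace step, exactly as in the paper's argument.
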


\begin{proof}
  We compute the sum of all absolute values of coefficients occurring in~$A$
  and call this number~$K\in \N$. This number satisfies
  $K \ge \|\ltmul A\|_\infty$. Hence, the characteristic sequence
  $\bigl(c(A,K)_p\bigr)_{p\in \N}$ is monotone decreasing and 
  converges to~$\ltm A G$ (Proposition~\ref{prop:char-seq}). 
 
  Moreover, the characteristic sequence is computable
  from the given data: For~$p \in \N$, we can compute~$(1-K^{-2} A A^*)^p$ as a matrix 
  in~$M_{m \times m}(\Q[S^*])$. We can then calculate the 
  trace by solving the word problem (Proposition~\ref{prop:word-problem-vs-trace}).
\end{proof}

\subsection{Right-computability}

The degree of right-computability is bounded by the degree of the word
problem.

\begin{thm}[right-computability]
  \label{thm:RC}
  Let $G$ be a finitely generated group with word problem of degree at
  most~$\mathbf{a}$. Moreover, let $m,n\in \N$ and $A\in M_{m\times
    n}(\Z G)$. Then, the $L^2$-Betti number $\ltm AG$ is
  $\mathbf{a}$-right-computable.
\end{thm}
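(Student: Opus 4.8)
The plan is to reduce Theorem~\ref{thm:RC} to the algorithmic statement already established in Lemma~\ref{lem:dimkeralgo}, combined with the characterisation of $\mathbf{a}$-right-computability from Proposition~\ref{prop:char-left-right-computability}. First I would fix a finite generating set~$S$ of~$G$; by the remark following the definition of the word problem, the degree of the word problem is independent of this choice, so there is an algorithm of degree at most~$\mathbf{a}$ solving the word problem of~$G$ with respect to~$S$. Next I would lift the given matrix~$A\in M_{m\times n}(\Z G)$ to a matrix $\widetilde A \in M_{m\times n}(\Z[S^*])$ by choosing, for each entry, a representing word for each group element occurring in it; this is a fixed finite piece of data, so it may simply be supplied as input (no computation is needed, and $\ltm{\widetilde A}G$ in the sense of Definition~\ref{def:L2-Betti-numbers} equals $\ltm AG$ since $\widetilde A$ and $A$ have the same image in $M_{m\times n}(\Z G)$).

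Then I would invoke Lemma~\ref{lem:dimkeralgo}: feeding it $G$, $S$, the word-problem oracle of degree at most~$\mathbf{a}$, the numbers $m,n$, and the matrix~$\widetilde A$, it produces a monotone decreasing sequence $q\colon \N \to \Q$ of degree at most~$\mathbf{a}$ converging to $\ltm{\widetilde A}G = \ltm AG$. In particular this sequence is monotone decreasing with limit~$\ltm AG$, so each term satisfies $q_p \ge \ltm AG$. By the equivalence of conditions~(1) and~(2) in Proposition~\ref{prop:char-left-right-computability} (a monotone decreasing rational sequence of degree at most~$\mathbf{a}$ converging to~$r$ witnesses $\mathbf{a}$-right-computability), this shows that $\ltm AG \in \RC_{\mathbf a}$, which is the claim.

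The argument is essentially a matter of assembling earlier results, so there is no substantial obstacle; the only point requiring a little care is the bookkeeping in the reduction. Concretely, one must check that the degree of the oracle used does not increase: the algorithm of Lemma~\ref{lem:dimkeralgo} has degree at most~$\mathbf{a}$ \emph{relative to} a word-problem oracle of degree at most~$\mathbf{a}$, hence has degree at most~$\mathbf{a}$ outright, so the output sequence indeed has degree at most~$\mathbf{a}$ and no jump is incurred. One should also note that the passage from $A$ over $\Z G$ to a representing matrix over $\Z[S^*]$ does not change the kernel of the induced operator on $(\ell^2 G)^m$, since that operator depends only on the image in $M_{m\times n}(\Z G)$; thus the $L^2$-Betti number computed by the algorithm is genuinely~$\ltm AG$. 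With these checks in place, the theorem follows.
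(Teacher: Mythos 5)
Your proposal is correct and follows essentially the same route as the paper: choose a generating set and a representative of $A$ over $\Z[S^*]$, feed Lemma~\ref{lem:dimkeralgo} to obtain a monotone decreasing $\mathbf{a}$-computable sequence converging to $\ltm AG$, and conclude via the characterisation of $\mathbf{a}$-right-computability. You spell out a few points the paper leaves implicit (the lift is well-defined on kernels, the degree bookkeeping), but these are just elaborations of the same argument.
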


\begin{proof}
  There exists a finite generating set~$S$ of~$G$, an algorithm of
  degree~$\mathbf a$ solving the word problem of~$G$ with respect
  to~$S$, and a representation of~$A$ in~$M_{m \times n}(\Z[S^*])$.
  Then, the algorithm of Lemma~\ref{lem:dimkeralgo} produces an
  monotone decreasing $\mathbf a$-computable sequence converging
  to~$\ltm AG$. Hence, $\ltm AG$ is $\mathbf a$-right-computable.
\end{proof}

\begin{cor}[right-computability, finitely presented case]
  \label{cor:RC-fin-pres}
  Let $G$ be a finitely \emph{presented} group, let $m,n\in
  \N$, and let $A\in M_{m\times n}(\Z G)$. Then, the $L^2$-Betti
  number~$\ltm AG$ is $\mathbf{1}$-right-computable.
\end{cor}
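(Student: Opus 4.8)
The plan is to reduce to Theorem~\ref{thm:RC} by controlling the Turing degree of the word problem of a finitely presented group. The key classical fact is the Novikov--Boone theorem in its quantitative form: the word problem of a finitely presented group is recursively enumerable, hence of Turing degree at most~$\mathbf{1}$ (the degree of the halting problem). Indeed, given a finite presentation~$\genrel{S}{R}$ of~$G$ and a word~$w\in S^*$, one can enumerate all consequences of the relators (all words obtainable from~$w$ by inserting or deleting relators and trivial pairs~$ss^{-1}$), and~$w=e$ in~$G$ if and only if this semidecision procedure eventually halts. Thus the word problem, viewed as a subset of~$S^*$, is $\Sigma^0_1$, so its characteristic function is of degree at most~$\mathbf{0}^{(1)} = \mathbf{1}$.

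First I would invoke this fact to conclude that~$G$, being finitely presented, is in particular finitely generated with word problem of degree at most~$\mathbf{1}$. Then I would simply apply Theorem~\ref{thm:RC} with~$\mathbf{a} = \mathbf{1}$: given the matrix~$A\in M_{m\times n}(\Z G)$, we obtain that~$\ltm AG = \dim_{\vN G}\ker(\ltmul A)$ is $\mathbf{1}$-right-computable. That is the entire argument.

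The only point deserving care is that Theorem~\ref{thm:RC} is stated for a finitely generated group ``with word problem of degree at most~$\mathbf{a}$'' and produces a $\mathbf{a}$-computable witnessing sequence via Lemma~\ref{lem:dimkeralgo}, which needs an \emph{algorithm} of degree~$\mathbf{a}$ solving the word problem. For~$\mathbf{a}=\mathbf{1}$ such an algorithm exists because the word problem is semidecidable: with an oracle for the halting problem one can decide membership in a recursively enumerable set (run the semidecision procedure; the oracle tells whether it halts). So there is genuinely a degree-$\mathbf{1}$ algorithm solving the word problem, and Lemma~\ref{lem:dimkeralgo} applies without modification. There is no real obstacle here; the content is entirely in the already-cited Theorem~\ref{thm:RC} together with the standard recursive enumerability of the word problem of finitely presented groups.
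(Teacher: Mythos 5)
Your argument is correct and coincides with the paper's own proof: both reduce to Theorem~\ref{thm:RC} via the observation that the word problem of a finitely presented group is recursively enumerable and hence of Turing degree at most~$\mathbf{1}$. One small slip: recursive enumerability of the word problem is an elementary fact about rewriting with relators, not the content of the Novikov--Boone theorem (which asserts the \emph{existence} of finitely presented groups with \emph{unsolvable} word problem), but this does not affect the argument.
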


\begin{proof}
  For finitely presented groups, the word problem is not necessarily
  solvable but it is always recursively
  enumerable~\cite[Theorem~12.2]{Rotman1995} and thus of Turing degree
  at most~$\mathbf{1}$~\cite[p.~645]{Simpson1977}.  Hence, the claim
  follows from Theorem~\ref{thm:RC}.
\end{proof}

\begin{remark}[]
  The set of $L^2$-Betti numbers arising from all finitely presented
  groups contains all non-negative weakly computable numbers, i.e.,
  all numbers that can be written as~$x-y$ with~$x,y\in
  \LC_{\mathbf{0}}$ and~$x\ge y$. This can for instance be deduced
  from the examples and techniques of Pichot, Schick and
  \.Zuk~\cite[Remark~13.5 and~13.6, Section~11]{Pichot2015}.
	
  On the other hand, by Corollary~\ref{cor:RC-fin-pres}, this set is a
  subset of~$\RC_{\mathbf{1}} \cap \R_{\ge 0}$. It is an open
  question what the set of $L^2$-Betti numbers arising from all
  finitely presented groups exactly looks like.
\end{remark}

\subsection{Left-computability}

Left-computability holds under additional control on
the spectral measure.

\begin{thm}[left- and effective computability]
  \label{thm:LC}
  Let $G$ be a finitely generated group with word problem of degree
  $\mathbf{a}$. Moreover, let $m,n\in \N$ and let $A\in M_{m\times
    n}(\Z G)$.  The following are equivalent:
  \begin{enumerate}
  \item\label{i:LC}
    The $L^2$-Betti number~$\ltm AG$ 
    is $\mathbf{a}$-left-computable.
  \item\label{i:EC}
    The $L^2$-Betti number $\ltm AG$ 
    is $\mathbf{a}$-computable.
  \item\label{i:cseq}
    There exists a sequence~$\epsilon:\N\to \Q$ of degree at
    most~$\mathbf{a}$ such that $\lim_{k\to\infty} \epsilon_k = 0$ and
    for all $k\in \N_{>0}$, we have
    \[
    \fa{k\in \N} \mu_{AA^*}\bigl((0,1/k)\bigr) \le \epsilon_k.
    \]
  \end{enumerate}
\end{thm}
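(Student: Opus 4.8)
The plan is to establish the cycle $(3)\Rightarrow(2)\Rightarrow(1)\Rightarrow(3)$. The implication $(2)\Rightarrow(1)$ is immediate from Corollary~\ref{cor:EC-vs-LC-RC}. For $(1)\Rightarrow(3)$, I would argue contrapositively, using the spectral description of $L^2$-Betti numbers: by Propositions~\ref{prop:spec-meas-kernel} and~\ref{prop:char-seq}, the characteristic sequence $c(A,K)_p = \trCG((1-K^{-2}AA^*)^p)$ decreases to $\mu_{AA^*}(\{0\}) = \ltm AG$, and the gap $c(A,K)_p - \ltm AG$ equals $\int_{(0,\|\ltmul{A}\|^2]} (1-K^{-2}x)^p\, d\mu_{AA^*}(x)$. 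If no sequence $\epsilon$ as in~(3) exists, then $\mu_{AA^*}((0,1/k))$ does not tend to $0$ along any $\mathbf a$-computable rate, which (combined with $\mathbf a$-right-computability from Theorem~\ref{thm:RC}) should force $\ltm AG$ to fail $\mathbf a$-left-computability: a lower $\mathbf a$-computable sequence converging to $\ltm AG$ together with the upper sequence from the characteristic sequence would yield an effective rate, and unwinding this rate via the spectral integral would produce exactly the bound on $\mu_{AA^*}((0,1/k))$ that~(3) asserts.

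The substantive implication is $(3)\Rightarrow(2)$. Given the sequence $\epsilon$ of degree at most $\mathbf a$ with $\epsilon_k\to 0$, I would run the algorithm of Lemma~\ref{lem:dimkeralgo} to get the monotone decreasing $\mathbf a$-computable characteristic sequence $(c(A,K)_p)_p$ converging to $\ltm AG$ from above. The point is to convert the qualitative convergence $\epsilon_k\to 0$ into an explicit error bound on the characteristic sequence. Split the spectral integral at a cutoff $1/k$:
\[
c(A,K)_p - \ltm AG = \int_{(0,1/k]} (1-K^{-2}x)^p\, d\mu_{AA^*}(x) + \int_{(1/k,\|\ltmul{A}\|^2]} (1-K^{-2}x)^p\, d\mu_{AA^*}(x).
\]
The first integral is bounded by $\mu_{AA^*}((0,1/k]) \le \epsilon_k$ (up to adjusting the open/closed endpoint, handled by passing to $1/(k+1)$ or similar). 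The second integral is bounded by $\mu_{AA^*}((1/k,\infty)) \cdot (1-K^{-2}/k)^p \le n\cdot(1-K^{-2}/k)^p$, which decays exponentially in $p$ once $k$ is fixed, so for each $k$ we can $\mathbf a$-computably find $p(k)$ making it at most $\epsilon_k$. Then $c(A,K)_{p(k)} - \ltm AG \le 2\epsilon_k$, and since $\epsilon_k\to 0$ with $\epsilon$ of degree $\mathbf a$, this is an $\mathbf a$-computable sequence with an $\mathbf a$-computable error bound tending to $0$; by Proposition~\ref{prop:char-computability}(2) this shows $\ltm AG\in\EC_{\mathbf a}$.

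The main obstacle I anticipate is bookkeeping around the endpoints of the intervals and the transition from "$\epsilon_k\to 0$, no rate" to a genuine obstruction in the $(1)\Rightarrow(3)$ direction — making precise that $\mathbf a$-left-computability really does hand back a sequence $\epsilon$ of degree at most $\mathbf a$ rather than just degree $\mathbf a^{(1)}$. The clean way is probably to prove $(2)\Leftrightarrow(3)$ directly (both directions run through the characteristic-sequence/spectral-integral dictionary above, which is visibly degree-preserving) and then get $(1)\Rightarrow(2)$ for free: by Theorem~\ref{thm:RC} we always have $\ltm AG\in\RC_{\mathbf a}$, so $\mathbf a$-left-computability plus $\mathbf a$-right-computability gives $\mathbf a$-computability via Corollary~\ref{cor:EC-vs-LC-RC}. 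That rearrangement sidesteps the delicate direction entirely and keeps every step within degree $\mathbf a$.
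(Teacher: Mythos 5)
Your $(3)\Rightarrow(2)$ argument is sound and is essentially the paper's $(3)\Rightarrow(1)$: split the spectral integral $c(A,K)_p - \ltm AG = \int_{(0,\,d]}(1-x/d)^p\,d\mu_{\Delta}$ at the cutoff~$1/k$, bound the low piece by $\epsilon_k$ and the tail by $m\cdot(1-1/(kd))^p$, and let $p=p(k)\to\infty$ fast enough (the paper simply takes $p=k^2$). Your rearrangement of the logic so that all left-computability claims are routed through Theorem~\ref{thm:RC} and Corollary~\ref{cor:EC-vs-LC-RC} is also exactly what the paper does.

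The gap is in the converse direction $(1)\Rightarrow(3)$ (equivalently $(2)\Rightarrow(3)$). You assert that ``both directions run through the characteristic-sequence/spectral-integral dictionary above,'' but the two directions need the exponent~$p(k)$ tuned in \emph{opposite} regimes, and you never supply the construction. Given a monotone increasing $\mathbf a$-computable sequence $q_k\nearrow\ltm AG$, the usable inequality is the lower bound
\[
c(\Delta,d)_p \;\ge\; \mu_\Delta(\{0\}) + \Bigl(1-\tfrac{1}{kd}\Bigr)^{p}\cdot \mu_\Delta\bigl((0,1/k)\bigr),
\]
whence
\[
\mu_\Delta\bigl((0,1/k)\bigr) \;\le\; \frac{c(\Delta,d)_p - q_k}{(1-1/(kd))^{p}}.
\]
To turn the right-hand side into an $\mathbf a$-computable $\epsilon_k\to 0$, one must choose $p(k)$ so that the denominator stays bounded \emph{below} (the opposite of what you need in $(3)\Rightarrow(2)$, where you want $(1-1/(kd))^{p(k)}\to 0$) while still $p(k)\to\infty$ so that the numerator tends to zero. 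The paper's choice is $p(k) := \max\{p\in\N : (1-1/(kd))^p \ge 1/2\}$, giving $\epsilon_k := 2\bigl(c(\Delta,d)_{p(k)} - q_k\bigr)$. This choice is the one genuinely new idea in the direction you left as a sketch; ``unwinding this rate via the spectral integral would produce exactly the bound'' does not follow without it, and your contrapositive framing does not sidestep the need for it. You should make this construction explicit to close the argument.
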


\begin{proof}
  The equivalence between items~\ref{i:LC} and~\ref{i:EC}
  follows from $\mathbf{a}$-right-computability (Theorem~\ref{thm:RC})
  and Corollary~\ref{cor:EC-vs-LC-RC}.
	
  We write $\Delta \coloneqq  AA^* \in M_{m\times m} (\Z G)$. Then $\Delta$ is
  self-adjoint and $\ker (\ltmul{\Delta}) = \ker
  (\ltmul{A})$. Therefore, it suffices to prove the claim for~$\ltm
  \Delta G$.  We view $\mu_\Delta$ as a measure on~$[0,d]$, where
  $K\ge \|\ltmul A\|_\infty$ and $d \coloneqq  K^2$.

  For the implication~\ref{i:LC} $\Rightarrow$~\ref{i:cseq}, let $\ltm
  \Delta G = \mu_\Delta(\{0\})$ be left-computable. By
  Proposition~\ref{prop:char-left-right-computability}, there exists a
  monotonically increasing sequence~$q:\N\to\Q$ of degree at
  most~$\mathbf{a}$ such that $\lim_{k\to\infty} q_k = \ltm \Delta G$.
  To show item~\ref{i:cseq} we construct a
  sequence~$(\epsilon_k)_{k\in\N}$ as follows: For~$k\in \N_{>0}$, we
  set
  \[
  p(k) \coloneqq  \max \Bigl\{p\in\N \Bigm| \Bigl( 1-\frac{1}{kd}\Bigr)^p \ge \frac 1 2\Bigr\}. 
  \]
  and 
  \begin{align*}
    \epsilon_k
    \coloneqq  2 \cdot \bigl( c(\Delta,d)_{p(k)} - q_k \bigr).
  \end{align*}
  The sequence~$p$ is computable. Since the characteristic
  sequence~$(c(\Delta,d)_n)_{n\in \N}$ is of degree at most~$\mathbf
  a$ (Lemma~\ref{lem:dimkeralgo}) and since $q$ is assumed to be
  $\mathbf a$-computable, also the sequence~$\epsilon$ is of degree at
  most~$\mathbf{a}$.
	
  Moreover, the sequence $(\epsilon_k)_{k\in \N}$ tends to zero for
  $k\to\infty$: Indeed, both the characteristic sequence
  $(c(\Delta,d)_k)_{k\in \N}$ and~$q$ tend to~$\ltm \Delta G$ and
  $\lim_{k\to\infty} p(k) = \infty$.

  Finally, we show for all~$k\in \N$ that $\mu_\Delta ((0,1/k)) \le
  \epsilon_k$. This estimate follows from:
  \begin{align*}
    c(\Delta,d)_{p(k)}
    & = 
    \trCG \Bigl(1- \frac{\Delta}{d} \Bigr)^{p(k)}
    = \int_0^d \Bigl(1- \frac{x}{d} \Bigr)^{p(k)}\; d\mu_\Delta(x) 
    &(\text{Proposition~\ref{prop:spec-meas-poly}})
    \\
    &\ge \int_{\{0\}} \Bigl(1- \frac{x}{d} \Bigr)^{p(k)}\; d\mu_\Delta(x)
    +
    \int_{(0,1/k)} \Bigl(1- \frac{x}{d} \Bigr)^{p(k)}\; d\mu_\Delta(x)
    \hspace{-2.5cm}
    \\
    &\ge \mu_\Delta\bigl(\{0\}\bigr)
    + \frac{1}{2} \cdot \mu_\Delta\bigl((0,1/k)\bigr)
    & (\text{choice of } p(k))
    \\
    &= \ltm \Delta G + \frac{1}{2} \cdot \mu_\Delta\bigl((0,1/k)\bigr)
    &(\text{Proposition~\ref{prop:spec-meas-kernel}})
    \\
    &\ge q_k + \frac{1}{2} \cdot \mu_\Delta\bigl((0,1/k)\bigr)
    & (q_k \text{ approx. from below})
  \end{align*}

  Conversely, we show the implication~\ref{i:cseq}
  $\Rightarrow$~\ref{i:LC}: Let $\epsilon:\N\to\Q$ be a sequence of
  degree at most~$\mathbf{a}$ as in item~\ref{i:cseq}. We will deduce
  left-computability. We denote
  \[
  q_k \coloneqq  c(\Delta,d)_{k^2}
  -\epsilon_k  - m \cdot \Big( 1 - \frac{1}{k\cdot d}\Big)^{k^2}.
  \]
  By assumption, $\lim_{k\to\infty} \epsilon_k = 0$. By elementary
  calculus,
  \[
  \lim_{k\to\infty} \Big( 1 - \frac{1}{k\cdot d}\Big)^{k^2} = 0.
  \]
  Hence, we obtain with Proposition~\ref{prop:char-seq} that
  \[\lim_{k\to\infty} q_k = \lim_{k\to\infty} c(\Delta,d)_{k^2}
  = \ltm \Delta G.
  \]
  The sequence~$(q_k)_{k\in \N}$ is of degree~$\mathbf{a}$ because its
  summands are of degree at most~$\mathbf{a}$; for the characteristic
  sequence, this follows from Lemma~\ref{lem:dimkeralgo}.

  The convergence of~$q$ is from below by the following calculation:
  \begin{align*}
    c(\Delta,d)_{k^2}
    & =\trCG \biggl(\Bigl(1-\frac{1}{d}\cdot \Delta\Bigr)^{k^2}\biggr)
    = \int_0^d \Big(1-\frac{x}{d}\Big)^{k^2}\;d\mu_\Delta
    & (\text{Proposition~\ref{prop:spec-meas-poly}})
    \\
    &\le \mu_\Delta\bigl(\{0\}\bigr)
    + \mu_\Delta\bigl((0, 1/k)\bigr)
    + \mu_\Delta\bigl([1/k, d]\bigr) \cdot \Bigl(1-\frac{1/k}{d}\Bigr)^{k^2}
    \hspace{-3cm}
    & (\text{monotonicity})
    \\
    &\le \mu_\Delta\bigl(\{0\}\bigr)
    + \epsilon_k
    +\mu_\Delta\bigl([0, d]\bigr) \cdot \Bigl(1-\frac{1}{kd}\Bigr)^{k^2}
    & (\text{assumption on }\epsilon_k)
    \\
    &= \ltm \Delta G
    + \epsilon_k
    +\mu_\Delta\bigl([0, d]\bigr) \cdot \Bigl(1-\frac{1}{kd}\Bigr)^{k^2}
    \hspace{-1cm}
    & (\text{Proposition~\ref{prop:spec-meas-kernel}})
    \\
    &= \ltm \Delta G
    + \epsilon_k
    +m \cdot \Bigl(1-\frac{1}{kd}\Bigr)^{k^2}
    & (\text{Proposition~\ref{prop:spec-meas-poly} with }p=1)
  \end{align*}
  Hence, $\ltm \Delta G$ is $\mathbf{a}$-left-computable.
\end{proof}


\section{The determinant class conjecture}
\label{sec:detclass}

We show that matrices of determinant class lead to effectively
computable $L^2$-Betti numbers (relative to the Turing degree of the
word problem) and the corresponding realisation result
(Theorems~\ref{thm:main:L2-computable-det-class}
and~\ref{thm:main:realisation}).

\subsection{The determinant class conjecture}

We first recall the definition of the Fuglede--Kadison determinant
and the determinant class conjecture. Details can be found in the
literature~\cite[Definition~1.3]{Schick2001}.

\begin{defi}[Fuglede--Kadison determinant]%
  Let $G$ be a group, $A\in M_{n\times n}(\Z G)$ be self-adjoint, and
  let $\mu_A$ be the spectral measure of~$A$.  Then, we define the
  \emph{Fuglede--Kadison determinant} of $A$ by
  \[
  \ln \det (A) \coloneqq  
  \begin{cases}		
    \int_{0^+}^\infty \log (x) \;d\mu_A(x) & \quad
    \text{if this integral converges}\\
	  -\infty & \quad\text{otherwise}.
  \end{cases} 
  \]
  Here, $\int_{0^+}^\infty$ denotes integration on the
  set~$(0,\infty)$.
\end{defi}

\begin{remark}[]
  There is only a convergence problem near~$0$ and no problem
  for~``$x\to\infty$'', as $\mu_A$ is supported on~$[0,\|\ltmul A\|]$.
\end{remark}

\begin{defi}[determinant class conjecture]
  \label{def:det-class}
  We say that a group~$G$ satisfies the \emph{determinant class
    conjecture} if for every~$n \in \N$ every self-adjoint element~$A
  \times M_{n\times n}(\Z G)$ satisfies~$ \ln \det A > -\infty.  $
\end{defi}

\begin{example}[]
  Sofic groups satisfy the determinant class
  conjecture~\cite[Theorem~5]{Elek2005}. For a sofic group~$G$, and a
  self-adjoint matrix~$A\in M_{n\times n}(\Z G)$, we even have~$\ln
  \det A \ge 0$.
\end{example}

It is not known whether all groups satisfy the determinant class conjecture.

\subsection{\texorpdfstring{Effective computability of $L^2$-Betti numbers}{Effective computability of L2-Betti numbers}}

From the property of being of determinant class,
we can deduce effective computability of the same degree
as the word problem. The case of degree~$\mathbf{0}$ was originally
proved by Groth~\cite[Theorem~6.12]{Groth2012}. 

First, note that the following holds.

\begin{lemma}[]
	\label{lemma:certify-DCC}
	Let $G$ be a finitely generated group 
	and let $A\in M_{n\times n}(\Z G)$ be
  self-adjoint and of determinant class.
  Then, there is $q\in \Q$ such that
  \[
  \int_{0^+}^1 \log (x) \; d\mu_A(x) \ge - q.
  \]
\end{lemma}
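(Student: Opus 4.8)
The plan is to unpack the hypothesis ``of determinant class'' and to isolate the only place where the integral defining the Fuglede--Kadison determinant can misbehave, namely near~$0$. By Definition~\ref{def:det-class} together with the definition of the Fuglede--Kadison determinant above, saying that the self-adjoint matrix~$A$ is of determinant class means precisely that
\[
\ln\det(A) = \int_{0^+}^\infty \log(x)\; d\mu_A(x)
\]
converges, i.e., is a finite real number. Recall moreover that $\mu_A$ is a finite measure supported on~$[0,\|\ltmul A\|]$; indeed its total mass is $\trCG$ of the $n\times n$ identity matrix, which equals~$n$, by Proposition~\ref{prop:spec-meas-poly} applied to the constant polynomial~$p=1$.

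The key step is then to split the integral over~$(0,\infty)$ at the point~$1$. Put $K := \max\{1,\|\ltmul A\|\}$. Then
\[
\int_{0^+}^\infty \log(x)\; d\mu_A(x)
= \int_{(0,1)} \log(x)\; d\mu_A(x) + \int_{[1,K]} \log(x)\; d\mu_A(x),
\]
and on each of the two pieces the integrand~$x\mapsto\log x$ has constant sign, so no cancellation issue arises and the splitting is legitimate. The second summand is the integral of the nonnegative function~$x\mapsto\log x$, which is bounded by~$\log K$ on~$[1,K]$, against the finite measure~$\mu_A$; hence it lies in~$[0,\,n\log K]$ (and is simply~$0$ in the degenerate case~$\|\ltmul A\|<1$, where $[1,K]=\{1\}$ carries no mass). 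Therefore
\[
\int_{0^+}^1 \log(x)\; d\mu_A(x)
= \ln\det(A) - \int_{[1,K]} \log(x)\; d\mu_A(x)
\]
is a difference of two finite real numbers, hence a finite real number~$r\in\R$.

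Finally, every real number~$r$ satisfies $r\ge -q$ for a suitable rational~$q$ (e.g.\ $q:=\lceil -r\rceil$, or any rational with $q\ge -r$), which yields the claimed~$q\in\Q$ with $\int_{0^+}^1\log(x)\;d\mu_A(x)\ge -q$. I do not expect a genuine obstacle here: the determinant class hypothesis already rules out the single possible source of divergence — the blow-up of~$\log$ near~$0$ — and the remaining points (finiteness of the tail integral, the edge case~$\|\ltmul A\|<1$) are purely bookkeeping.
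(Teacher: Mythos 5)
Your proof is correct and takes essentially the same route as the paper: the paper simply observes that ``determinant class'' directly gives $\int_{0^+}^1 \log(x)\,d\mu_A(x) > -\infty$ (using the earlier remark that the only convergence issue is near~$0$) and then picks a rational bound. You additionally spell out why the tail $\int_{[1,\|\ltmul A\|]}\log(x)\,d\mu_A(x)$ is finite via the total mass $\mu_A([0,\|\ltmul A\|]) = n$, which is the detail the paper leaves implicit.
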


\begin{proof}
	Because $A$ is of determinant class, we have 
  in particular that
  $
  \int_{0^+}^1 \log (x) \; d\mu_A(x) > -\infty
  $. 
  Thus, there is a $q\in \Q_{\ge 0}$ satisfying the claim.
\end{proof}

We do not know if we can algorithmically compute such a rational number.

\begin{question}[]
	\label{question:dcc-computable}
	Let $S\subset G$ be a finite generating set.
	Is there an algorithm that, given a matrix~$A\in M_{n\times n}(\Z S^*)$
        whose image in~$M_{n\times n}(\Z G)$ is of determinant class, 
  	computes a~$q\in \Q_{\ge 0}$ that satisfies Lemma~\ref{lemma:certify-DCC}?
\end{question}

\begin{thm}
  \label{thm:L2-computable-det-class}
  Let $\mathbf a$ be a Turing degree. There is an algorithm of Turing
  degree~$\mathbf a$ that, 
  \begin{itemize}
  	\item given a finitely generated group~$G$, given by a finite generating
  	set~$S$,
  	\item an algorithm of Turing degree~$\mathbf a$, solving the word
  	problem of~$G$,
  	\item a matrix $A\in M_{n\times n}(\Z S^*)$ whose image in 
  	$M_{n\times n}(\Z G)$ is of determinant class, and
  	\item a rational number $q\in \Q_{\ge 0}$ as in Lemma~\ref{lemma:certify-DCC},
        \item
          computes a sequence $\N \to \Q$ that effectively converges to~$\ltm {A}{G}$.
  \end{itemize}
  In particular, $\ltm {A}{G}$ is $\mathbf{a}$-computable.
\end{thm}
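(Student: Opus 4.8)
The plan is to upgrade the $\mathbf{a}$-right-computability provided by Theorem~\ref{thm:RC} to full $\mathbf{a}$-computability by establishing $\mathbf{a}$-left-computability via the spectral criterion of Theorem~\ref{thm:LC}; the role of the determinant class certificate $q$ is precisely to manufacture the rational sequence controlling $\mu_{AA^*}\bigl((0,1/k)\bigr)$ required in item~\ref{i:cseq} of that theorem.

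As in the proof of Theorem~\ref{thm:LC}, set $\Delta := AA^*$, so that (the image of) $\Delta$ in $M_{n\times n}(\Z G)$ is self-adjoint, is of determinant class, and satisfies $\ker(\ltmul\Delta) = \ker(\ltmul A)$, whence $\ltm AG = \ltm\Delta G$; by Lemma~\ref{lemma:certify-DCC} applied to $\Delta$, the given rational $q$ may be taken to satisfy $\int_{0^+}^{1}\log x\; d\mu_\Delta(x) \ge -q$. The one substantive step is the elementary estimate
\[
\mu_\Delta\bigl((0,1/k)\bigr) \le \frac{q}{\ln k} \qquad (k\in\N,\ k\ge 2).
\]
To see this, split $\int_{0^+}^{1}\log x\;d\mu_\Delta(x)$ over $(0,1/k)$ and $[1/k,1)$: on $[1/k,1)$ the integrand is $\le 0$, so that contribution is $\le 0$, while on $(0,1/k)$ one has $\log x \le -\ln k$; hence $-q \le -\ln k \cdot \mu_\Delta\bigl((0,1/k)\bigr)$, which rearranges to the claim.

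Next I would turn this into the rational sequence demanded by Theorem~\ref{thm:LC}~\ref{i:cseq}: take a $\mathbf{0}$-computable sequence of positive rationals $(\ell_k)_k$ with $\ell_k \le \ln k$ for $k\ge 2$ and $\ell_k\to\infty$ (e.g.\ a rational multiple of $\lfloor\log_2 k\rfloor$), and set $\epsilon_k := q/\ell_k$ for $k\ge 2$ and $\epsilon_1 := n$ (the total mass $\mu_\Delta([0,K^2]) = \trCG(I_n) = n$ dominates everything). Then $\epsilon\colon\N\to\Q$ tends to $0$, satisfies $\mu_\Delta\bigl((0,1/k)\bigr)\le\epsilon_k$ for all $k$, and is computable from $q$, hence of degree at most $\mathbf a$; so $\ltm AG = \ltm\Delta G$ is $\mathbf a$-left-computable by Theorem~\ref{thm:LC}, and with Theorem~\ref{thm:RC} and Corollary~\ref{cor:EC-vs-LC-RC} it is $\mathbf a$-computable. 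To actually \emph{output} an effectively converging sequence by a degree-$\mathbf a$ algorithm, I would squeeze the two bounds from the proof of Theorem~\ref{thm:LC}: Lemma~\ref{lem:dimkeralgo} gives the degree-$\mathbf a$ sequence $(c(\Delta,d)_p)_p$ decreasing to $\ltm\Delta G$ from above (with $d := K^2$ and $K$ a computable bound for $\|\ltmul A\|_\infty$), while $c(\Delta,d)_{k^2} - \epsilon_k - n\cdot(1-\tfrac{1}{kd})^{k^2}$ is a degree-$\mathbf a$ sequence converging to $\ltm\Delta G$ from below; given $j\in\N$, search for an index $k$ at which these two values differ by at most $2^{-j}$ and output the upper value, a search that terminates because the gap $\epsilon_k + n\cdot(1-\tfrac{1}{kd})^{k^2}$ tends to $0$.

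The main obstacle is really just the logarithmic estimate above — converting the integrability bound for $\log x$ against $\mu_\Delta$ near $0$ into the decay rate $\mu_\Delta\bigl((0,1/k)\bigr) = O(1/\ln k)$; everything else is bookkeeping, once one checks that the certificate $q$ enters uniformly in the input and that passing from $A$ to $\Delta = AA^*$ changes neither the kernel nor the relevant hypotheses. A cosmetic point to settle is the mismatch between Lemma~\ref{lemma:certify-DCC}, stated for self-adjoint matrices, and the theorem's hypothesis on a general square matrix $A$; this is resolved by reading both ``$A$ of determinant class'' and ``$q$ as in Lemma~\ref{lemma:certify-DCC}'' through $\Delta = AA^*$.
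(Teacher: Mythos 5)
Your argument matches the paper's proof: the paper also sets $\epsilon_k := q/\lfloor\log k\rfloor$, derives the same logarithmic bound $\mu\bigl((0,1/k)\bigr)\le q/\log k$ by splitting the integral of $\log x$ at $1/k$, and then invokes the constructive content of the implication from item~(3) to item~(1) in Theorem~\ref{thm:LC}. Your extra care in routing through $\Delta = AA^*$ (to reconcile the self-adjointness hypothesis of Lemma~\ref{lemma:certify-DCC} with the general square $A$, and incidentally to sidestep the paper's small slip that $\lfloor\log 2\rfloor = 0$) and in spelling out the squeeze between the monotone upper and lower bounds to actually emit the effectively converging sequence simply makes explicit what the paper's one-line appeal to ``the proof of Theorem~\ref{thm:LC}'' leaves implicit.
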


\begin{proof}
  By the proof of Theorem~\ref{thm:LC}, it suffices to construct a sequence~$\epsilon:\N
  \to \Q$ of degree at most~$\mathbf{a}$ such that $\lim_{k\to\infty}
  \epsilon_k~=~0$ and for all $k\in \N_{>0}$, we have
  $
  \mu_A((0,1/k))~\le~\epsilon_k.
  $
  We set 
  \[\Bigl(\epsilon_k \coloneqq  \frac{q}{\lfloor \log k \rfloor}\Bigr)_{k\in \N_{\ge 2}},\]
  which is computable (so in particular of degree at most~$\mathbf{a}$). Moreover,  $\lim_{k\to\infty}
  \epsilon_k~=~0$ and for all $k\in \N_{\ge 2}$, we have
  \[
    - q
    \le \int_{0^+}^1 \log (x) \; d\mu_A(x)
    \le \int_{0^+}^{(1/k)^-} \log (x) \; d\mu_A(x)
    \le \log(1/k) \cdot \mu_A\bigl((0,1/k)\bigr),
  \]
  as desired.  
\end{proof}

\begin{remark}[sofic groups]
  For sofic groups, we can prove a result analogous to
  Theorem~\ref{thm:L2-computable-det-class} more directly, using the
  fact that the Cayley graphs of sofic groups can be approximated by
  finite graphs. The proof of Elek and Szab\'o that sofic groups
  satisfy the determinant class conjecture contains a result on the
  approximation of traces~\cite[Lemma~6.3]{Elek2005}. We can thus use
  an approach similar to the one of
  Section~\ref{sec:quantitative-Lueck} to obtain effective
  computability.
  
  Alternatively, one can use an estimate given in an article
  of Grabowski~\cite[Proposition~A1]{Grabowski2015}.
  
  One advantage of these approaches is that we obtain a slightly
  stronger statement: There is an algorithm, that, given $A$ and 
  an algorithm of degree~$\mathbf{a}$ solving the word problem in $G$, computes a sequence 
  witnessing that $\ltm{A}{G}$ is effectively computable of degree 
  at most~$\mathbf{a}$. This algorithm does not need  a rational number as in
   Lemma~\ref{lemma:certify-DCC} as an input. If Question~\ref{question:dcc-computable} has a 
   positive answer, we can also eliminate this dependence in 
   Theorem~\ref{thm:L2-computable-det-class}. 
\end{remark}

We do not expect that computability of $L^2$-Betti numbers can
be leveraged into a proof that the underlying group is of determinant
class.

\begin{remark}
  There is another perspective on Theorem~\ref{thm:L2-computable-det-class}:
  If $G$ is a finitely generated group with word problem of degree~$\mathbf a$
  and there is an $L^2$-Betti number arising from~$G$ that is \emph{not}
  an $\mathbf{a}$-computable real number, then Theorem~\ref{thm:L2-computable-det-class}
  shows that $G$ does \emph{not} satisfy the determinant class conjecture.
  
  However, it is perfectly possible that for all groups~$G$ with word
  problem of degree~$\mathbf a$ all $L^2$-Betti numbers arising
  from~$G$ are $\mathbf a$-computable $L^2$-Betti numbers --
  independently of the determinant class conjecture.
\end{remark}

\subsection{\texorpdfstring{Realisation of $L^2$-Betti numbers}{Realisation of L2-Betti numbers}}

Conversely, each non-negative $\mathbf{a}$-computable real arises as
an $L^2$-Betti number over a group with $\mathbf{a}$-solvable word
problem:

\begin{thm}
  \label{thm:EC-realised}
  Let $\mathbf{a}$ be a Turing degree and let $b\in \EC_\mathbf{a} \cap
  \R_{\ge 0}$. Then, there exists a finitely generated group of
  determinant class with word problem of degree at most~$\mathbf{a}$
  such that $b$ is an $L^2$-Betti number arising from~$G$.
\end{thm}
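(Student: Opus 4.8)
The plan is to mimic the realisation construction of Pichot, Schick, and Żuk~\cite{Pichot2015}, but carefully tracking the Turing degree of the word problem of the group produced. Recall that their construction encodes a prescribed computable real as an $L^2$-Betti number by building a group out of combinatorial data (roughly, a ``lamplighter-type'' or wreath-product construction over a group whose Cayley graph has controlled spectral behaviour, together with a set $A \subset \N$ encoding the binary expansion). Concretely, by Proposition~\ref{prop:char-computability}(4), write $b = k + \sum_{n \in A} 2^{-n}$ with $k \in \Z_{\ge 0}$ (here $k=0$ or $k \geq 0$ since $b \ge 0$) and $A \subset \N_{>0}$ a set of Turing degree at most~$\mathbf a$. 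The first step is to invoke the Pichot--Schick--Żuk machinery (or the variant in~\cite[Section~11]{Pichot2015}) to obtain a finitely generated group $G = G(A)$ together with a matrix over $\Z G$ whose $L^2$-Betti number is exactly $b$, where $G$ is assembled from the data of $A$ in an explicit, ``uniform'' way.

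The second step is the degree bookkeeping: I would check that the word problem of $G(A)$ is Turing-reducible to the set $A$. This is plausible because in these constructions the relators of $G(A)$ are listed by an algorithm that only needs to query membership in $A$; if $G(A)$ is built as a suitable (iterated) extension or graph-product-type amalgam of pieces indexed by $\N$, with the $n$-th piece ``switched on'' according to whether $n \in A$, then a word is trivial iff it is trivial in a finitely-presented-relative-to-$A$ presentation, and solving that reduces to an enumeration of consequences of relators together with $\le_T A$ queries — hence word problem $\le_T A$, so $G$ has word problem of degree at most $\mathbf a$. One must also confirm that the relevant $L^2$-Betti number is genuinely $\mu_{\cdot}(\{0\})$ of the constructed matrix and equals $b$ on the nose; this is exactly the content of the cited realisation theorem, so it can be quoted.

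The third step is to arrange that $G$ is of determinant class. Here the cleanest route is to choose the building blocks so that $G$ lands in a class already known to satisfy the determinant class conjecture — e.g., by taking the ambient groups in the construction to be sofic (finitely generated residually finite or amenable pieces, with soficity preserved under the operations used), since sofic groups are of determinant class by~\cite[Theorem~5]{Elek2005}. If the Pichot--Schick--Żuk groups are already sofic (or residually amenable) by construction, this is immediate; otherwise one replaces their building blocks with sofic analogues having the same spectral input data.

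\textbf{Main obstacle.} The delicate point is the simultaneous satisfaction of all three requirements: realising \emph{exactly} $b$, keeping the word problem \emph{of degree at most} $\mathbf a$ (not just recursively-in-$\mathbf a$-enumerable — one needs decidability relative to $\mathbf a$, which requires the construction to be ``two-sided'' in $A$, exploiting that $A$ itself is decidable relative to $\mathbf a$), and staying within the determinant class. The degree-$\mathbf 0$ case in~\cite{Pichot2015} does not need to worry about parametrising over $\mathbf a$, so the real work is verifying that their construction relativises cleanly; I expect this to reduce to a careful but routine inspection of their group-theoretic input, together with the observation that $A \le_T \mathbf a$ lets us answer both ``$n \in A$?'' and ``$n \notin A$?'', which is what makes the word problem genuinely $\mathbf a$-decidable rather than merely $\mathbf a$-semi-decidable.
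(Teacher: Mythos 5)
Your proposal follows the same route as the paper: encode $b$ via its binary expansion over an $\mathbf{a}$-decidable set, feed that set into the Pichot--Schick--\.Zuk wreath-product construction $G_I = \bigl((\Z/2)^{\oplus\Gamma}/V_{F_l,I}\bigr)\rtimes\Gamma$ with $\Gamma = \Z\wr\Z$, observe that $G_I$ is sofic (hence of determinant class by Elek--Szab\'o), and check that the word problem of $G_I$ is Turing-reducible to $I$ and hence of degree at most~$\mathbf a$. The three points you flag as the ``main obstacle'' are exactly the points the paper verifies, and your observation that one needs genuine $\mathbf a$-decidability of $I$ (not just $\mathbf a$-semi-decidability) for the word problem of $G_I$ to have degree at most~$\mathbf a$ is the right thing to worry about; this relativises cleanly from PSZ's argument for the degree-$\mathbf 0$ case.

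The one step you gloss over is the passage from ``PSZ realise certain special reals'' to ``every $b\in\EC_{\mathbf a}\cap\R_{\ge 0}$ is realised.'' Writing $b = k + \sum_{n\in A}2^{-n}$ and then saying the PSZ machinery yields a group whose $L^2$-Betti number is ``exactly $b$'' is not quite right: PSZ's Theorem~10.1 realises only reals of the specific shape $a + q\cdot\sum_{k}2^{k-d\,n_k}$ for a strictly increasing sequence $(n_k)$, where $a,q,d$ are fixed by the construction. To get all of $\EC_{\mathbf a}\cap\R_{\ge 0}$ one also needs the closure argument: the set of realisable values is closed under addition, under addition of and multiplication by non-negative rationals (via products of groups and $L^2$-Betti numbers of finite groups), and these closure properties plus the special PSZ values suffice to generate $\EC_{\mathbf a}\cap\R_{\ge 0}$ --- this is the content of Lemma~\ref{lemma:EC-subset-U} in the paper, which is the $\mathbf a$-parametrised analogue of PSZ's Proposition~11.4. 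Since you cite Section~11 of PSZ, you are pointing at the right place, but the closure step should be made explicit rather than subsumed into ``the cited realisation theorem.''
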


This theorem was first proved by Pichot, Schick and \.Zuk for
$\mathbf{a} = \mathbf{0}$~\cite[Remark~13.3]{Pichot2015}. We follow
their construction of such groups.

\begin{proof} 
  In analogy with the approach by Pichot, Schick, and
  \.Zuk~\cite[Proposition~11.4]{Pichot2015}, we have: 

  \begin{lemma}
    \label{lemma:EC-subset-U}
    Let $\mathbf{a}$ be a Turing degree and let $U\subset \R_{\ge 0}$ be a
    subset with the following properties:
    \begin{enumerate}
    \item\label{i:mult}
      The set~$U$ is closed under multiplication with and addition
      of non-negative rational numbers.
    \item\label{i:add}
      The set~$U$ is additively closed: If $r,s\in U$, then~$r+s\in U$.
    \item\label{i:binary}
      There are numbers~$a \in \Q_{\geq 0}$, $q\in \Q_{>0}$ and
      $d\in \N_{>0}$ such that for every strictly increasing 
      $\mathbf{a}$-computable sequence~$n:\N\to\N$, we have
      \[
      a+q\cdot \sum_{k=1}^\infty 2^{k-dn_k}\in U.
      \]
    \end{enumerate}
    Then, we have $\EC_\mathbf{a} \cap \R_{\ge 0} \subseteq U$.
  \end{lemma}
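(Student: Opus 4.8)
The plan is to reduce the assertion to a purely combinatorial realisation statement about the numbers $\sum_{k\ge 1}2^{k-dn_k}$, and then to read off $b\in U$ from the closure properties~\ref{i:mult},~\ref{i:add} and the hypothesis~\ref{i:binary}. Concretely, I would prove: \emph{for every $\mathbf a$-computable $x\ge 0$ there are finitely many strictly increasing $\mathbf a$-computable sequences $n^{(1)},\dots,n^{(N)}\colon\N\to\N$, rationals $\lambda_1,\dots,\lambda_N\in\Q_{>0}$ with $\sum_i\lambda_i\le 1$, and a rational $\nu\ge 0$, with}
\[
x=\nu+\sum_{i=1}^N\lambda_i\sum_{k\ge 1}2^{k-dn^{(i)}_k}.
\]
Granting this, the lemma follows. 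First, \ref{i:binary} shows $U\neq\emptyset$, so by \ref{i:mult} (multiply an element of $U$ by $0$, then add a rational) we get $\Q_{\ge 0}\subseteq U$; in particular the claim holds when $b\in\Q$, so assume $b\notin\Q$. If $b\ge a$, apply the realisation statement to $x:=(b-a)/q$ (which is $\mathbf a$-computable and $\ge 0$); since $a+q\sum_k 2^{k-dn^{(i)}_k}\in U$ for every $i$ by~\ref{i:binary}, multiplying by $\lambda_i$, summing via~\ref{i:add}, and adding the non-negative rational $a\bigl(1-\sum_i\lambda_i\bigr)+q\nu$ yields $a+q\,x=b$, hence $b\in U$. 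If $b<a$ (so $a>0$ and $b>0$), pick a rational $\lambda\in(0,b/a)$, run the same argument with $x:=(b-\lambda a)/(\lambda q)>0$, and conclude $b=\lambda a+\lambda q\,x\in U$ in the same way.

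It remains to establish the realisation statement. Write $x=k'+\sum_{j\in A'}2^{-j}$ with $k'\in\N$ and $A'\subseteq\N_{>0}$ of Turing degree at most $\mathbf a$ (Proposition~\ref{prop:char-computability}(4)); if $A'$ is finite then $x\in\Q$ and we are done with $N=0$, $\nu=x$, so assume $A'$ is infinite with increasing enumeration $m'_1<m'_2<\dots$, itself a function of degree at most $\mathbf a$. For $d=1$ a single sequence suffices, with no scaling: put $n_k:=k$ for $1\le k\le k'$ and $n_k:=m'_{k-k'}+k$ for $k>k'$; this sequence is strictly increasing and $\mathbf a$-computable, and $\sum_{k\ge 1}2^{k-n_k}=\sum_{k=1}^{k'}1+\sum_{\ell\ge 1}2^{-m'_\ell}=x$. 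For $d\ge 2$ the positions $e_k:=dn_k-k$ of any admissible sequence are strictly increasing, satisfy $e_k\equiv -k\pmod d$, and have consecutive gaps $\ge d-1$, so a single sequence only produces numbers whose binary support is such a rigid ``cyclic'' position set. Here I would follow Pichot--Schick--\.Zuk~\cite[Proposition~11.4]{Pichot2015}: spread $x$ into a base-$2^d$ representation $x=\sum_v m_v\,2^{-dv}$ with bounded digits $m_v\in\{0,\dots,2^d-1\}$, write each digit in binary, and distribute the resulting bits over auxiliary sequences so that each of them, after rescaling by a suitable power of $2$ to correct residues, contributes one admissible position set; shifting all sequences uniformly upwards then also arranges $\sum_i\lambda_i\le 1$.

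The main obstacle is exactly this last construction in the case $d\ge 2$: because a single admissible sequence has such a constrained binary support, realising an \emph{arbitrary} $\mathbf a$-computable $x$ forces a passage to a variable-base representation with bounded digits split across several sequences, and one must simultaneously check that the residue congruences and gap bounds on the positions are respected, that the digits stay bounded, and that every auxiliary sequence produced is again strictly increasing and of Turing degree at most $\mathbf a$. All the other steps — the reductions to $b\notin\Q$ and to $x=(b-a)/q$ (resp.\ its scaled variant), the $d=1$ construction above, and the deduction of $b\in U$ from \ref{i:mult},\ref{i:add},\ref{i:binary} — are routine.
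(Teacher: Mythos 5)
The paper does not actually prove this lemma: inside the proof of Theorem~\ref{thm:EC-realised} it states the lemma and justifies it only with the phrase ``in analogy with'' Pichot--Schick--\.Zuk's Proposition~11.4, outsourcing the combinatorial construction to that reference. So you are supplying more detail than the paper itself does, and there is no paper-internal proof to compare against line by line.

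Your reduction framework is correct and cleanly executed: deriving $\Q_{\ge 0}\subseteq U$ from \ref{i:mult}, reducing to $b\notin\Q$, the case split $b\ge a$ versus $b<a$ via the auxiliary rational $\lambda\in(0,b/a)$, and the derivation of $b\in U$ from the proposed realisation statement together with \ref{i:mult}, \ref{i:add}, \ref{i:binary}. The $d=1$ construction is also correct (and gives $N=1$, $\lambda_1=1$, $\nu=0$, so $\sum_i\lambda_i\le 1$ holds trivially). The genuine gap is the one you flag yourself: for $d\ge 2$ the realisation statement is only sketched, not proved, and since this is where all the combinatorial difficulty lives, the argument is not complete. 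Moreover, one detail in your sketch looks backwards: shifting a sequence upwards, $n\mapsto n+M$, replaces $\sum_k 2^{k-dn_k}$ by $2^{-dM}\sum_k 2^{k-dn_k}$, so to preserve the contribution $\lambda_i\sum_k 2^{k-dn^{(i)}_k}$ one must \emph{enlarge} $\lambda_i$ by a factor~$2^{dM}$, which hurts rather than helps the bound $\sum_i\lambda_i\le 1$. To make the scalars small, one instead wants the sequences to start as low as possible, so that $\sum_k 2^{k-dn^{(i)}_k}$ is as large as possible (it is bounded above by $2^d/(2^{d-1}-1)$), together with a rational $\nu$ absorbing most of~$x$; this requires a more careful bookkeeping of the residues modulo~$d$, and is exactly the construction the paper delegates to Pichot--Schick--\.Zuk.
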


  For the proof of Theorem~\ref{thm:EC-realised}, we choose~$U$ to be
  the set of all $L^2$-Betti numbers arising from finitely generated
  groups of determinant class with word problem of degree at
  most~$\mathbf{a}$.

  The conditions~\ref{i:mult} and~\ref{i:add} of
  Lemma~\ref{lemma:EC-subset-U} are satisfied, because products of two
  groups from the given class preserve the desired properties and we
  can find the sums~\cite[Lemma~11.2]{Pichot2015} and
  products~\cite[Lemma~11.3]{Pichot2015} of the $L^2$-Betti numbers as
  $L^2$-Betti numbers of the product. Moreover, all non-negative
  rational numbers are $L^2$-Betti numbers of finite
  groups~\cite[Example~3.14]{Kammeyer2019} and finite groups are sofic
  and have even solvable word problem.

  As for condition~\ref{i:binary}, let $I \subset \N$ denote the image
  of~$n$.  Then, $I$ is an $\mathbf{a}$-decidable set. Pichot, Schick
  and \.Zuk construct a group~$G_I$ with the desired $L^2$-Betti
  number as follows: Let $\Gamma \coloneqq  \Z\wr\Z$, generated by two
  specific elements~$s_1, s_2$. We define
  \[
  G_I \coloneqq  \bigl((\Z/2)^{\oplus \Gamma} /V_{F_l, I } \bigr) \rtimes \Gamma.
  \]
  Here, we consider the action of $\Gamma$ by translation on 
  $(\Z/2)^{\oplus \Gamma} = (\Z/2)[\Gamma]$. Moreover, consider the basis 
  $\{\delta_g \mid g\in \Gamma\}$ of $(\Z/2)^{\oplus \Gamma}$, where 
  $\delta_g$ denotes the characteristic function of the set $\{g\}$.
  We define $u\coloneqq  \delta_{s_1^{-1}} + \delta_e + \delta_{s_1}$ and 
  $V_{F_l, I}$ to be the $(\Z/2)[\Gamma]$-submodule of $(\Z/2)[\Gamma]$
  spanned by all the elements of the form
  \[u-tu
  \]
  with $t\in \Lambda_I \coloneqq  \langle s_2^n s_1 s_2^{-n} \mid n\in
  I\rangle \subset \Gamma$.
  Pichot, Schick and \.Zuk show that
  there are $a$, $q$, and~$d$ (which do not depend on~$I$)
  such that 
  \[
  a+q'\cdot \sum_{k=1}^\infty 2^{k-d'(3\cdot n_k+2)}
  = a+(q'\cdot 2^{-2d'})\cdot \sum_{k=1}^\infty 2^{k-(3d')\cdot n_k}
  \]
  is an $L^2$-Betti number arising from~$G_I$~\cite[Theorem~10.1]{Pichot2015}.

  Hence, it remains to show that $G_I$ is finitely generated, of
  determinant class and has a word problem of degree at
  most~$\mathbf{a}$.

  The group~$G_I$ is generated by~$s_1$, $s_2$, and~$\delta_0$;
  moreover, it is the extension of an abelian group with
  quotient~$\Gamma$.  It is known that $G_I$ is
  sofic~\cite[Theorem~1(3)]{Elek2006}, hence satisfies the determinant
  class conjecture~\cite[Theorem~5]{Elek2005}.  

  Finally, the word problem of~$G_I$ is of degree at
  most~$\mathbf{a}$.  This follows by the same argument as in the
  case~$\mathbf{a} = \mathbf{0}$~\cite[Theorem~12.4]{Pichot2015}.

  Hence, Lemma~\ref{lemma:EC-subset-U} shows that the set~$U$ of all
  $L^2$-Betti numbers arising from finitely generated groups of
  determinant class with word problem of degree at most~$\mathbf{a}$
  contains~$\EC_\mathbf{a} \cap \R_{\ge 0}$.
\end{proof}

In combination with Theorem~\ref{thm:L2-computable-det-class}, we
obtain the following result:

\begin{cor}\label{cor:realisation}
  Let $\mathbf{a}$ be a Turing degree. The set of
  $L^2$-Betti numbers arising from finitely generated groups of
  determinant class with word problem of degree at most~$\mathbf{a}$
  is equal to~$\EC_\mathbf{a} \cap \R_{\ge 0}$.
  \hfill\qedsymbol
\end{cor}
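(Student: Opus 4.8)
The plan is to combine the two inclusions already available in the excerpt. The content of Corollary~\ref{cor:realisation} is the set equality
\[
\bigl\{\, b \in \R_{\ge 0} \bigm| b \text{ arises as } \ltm A G \text{ for some finitely generated } G \text{ of determinant class with word problem } \le_T \mathbf a \,\bigr\} = \EC_\mathbf{a} \cap \R_{\ge 0},
\]
and both inclusions have essentially been established by the preceding theorems.

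First I would prove the inclusion ``$\subseteq$''. Let $b$ be an $L^2$-Betti number arising from a finitely generated group~$G$ of determinant class with word problem of degree at most~$\mathbf a$; say $b = \ltm A G$ for some $A \in M_{n\times n}(\Z G)$ (passing to a square self-adjoint matrix if necessary, as in the self-adjointness remark). Choose a finite generating set~$S$, a representative of~$A$ in $M_{n\times n}(\Z[S^*])$, and an algorithm of degree~$\mathbf a$ solving the word problem. Since $AA^*$ is of determinant class, Lemma~\ref{lemma:certify-DCC} supplies a rational number~$q \in \Q_{\ge 0}$ certifying this. Feeding all of this data into the algorithm of Theorem~\ref{thm:L2-computable-det-class} produces a sequence $\N \to \Q$ of degree at most~$\mathbf a$ that effectively converges to~$b$, so $b \in \EC_\mathbf a$. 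Since $b \ge 0$ by definition of $L^2$-Betti numbers, we get $b \in \EC_\mathbf a \cap \R_{\ge 0}$. Note that the certifying rational~$q$ need not be computed algorithmically: it is part of the input to the statement being proved is purely a statement about membership in $\EC_\mathbf a$, so mere existence of~$q$ (guaranteed by Lemma~\ref{lemma:certify-DCC}) suffices.

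Conversely, for ``$\supseteq$'', let $b \in \EC_\mathbf a \cap \R_{\ge 0}$. Theorem~\ref{thm:EC-realised} produces a finitely generated group~$G$ of determinant class with word problem of degree at most~$\mathbf a$ such that $b$ is an $L^2$-Betti number arising from~$G$; this is exactly the condition for $b$ to lie in the left-hand set. Combining the two inclusions yields the claimed equality. The proof is therefore just a matter of correctly matching the hypotheses and conclusions of Theorem~\ref{thm:L2-computable-det-class} and Theorem~\ref{thm:EC-realised} with the definitions of the two sets; there is no real obstacle, and the only point requiring a moment's care is observing that the non-constructive rational~$q$ from Lemma~\ref{lemma:certify-DCC} is harmless here because we only need the \emph{existence} of an effectively convergent sequence, not an algorithm that produces it from a presentation of~$G$ alone.
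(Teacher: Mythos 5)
Your proof is correct and matches the paper's intended argument: the paper proves this corollary by simply combining Theorem~\ref{thm:L2-computable-det-class} (giving the inclusion into $\EC_\mathbf{a} \cap \R_{\ge 0}$) with Theorem~\ref{thm:EC-realised} (giving the reverse inclusion), which is exactly what you do. You also correctly flag the one subtlety worth noting, namely that the non-constructive certifying rational from Lemma~\ref{lemma:certify-DCC} is harmless here because the corollary only asserts membership in $\EC_\mathbf{a}$, i.e.\ the \emph{existence} of an effectively convergent sequence, not its algorithmic producibility from $G$ and $A$ alone.
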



\section{A quantitative version of L\"uck's approximation theorem}
\label{sec:quantitative-Lueck}

By L\"uck's approximation theorem, $L^2$-Betti 
numbers of finite type free $G$-CW-complexes can be approximated
by the ordinary Betti numbers of the quotient spaces associated
with residual chains. 

\begin{thm}[L\"uck's approximation theorem~\protect{\cite[Theorem~0.1]{Lueck1994}}]
  \label{thm:Lueck-approx}
  Let $X$ be a finite type free $G$-CW-complex.  Let $G$ be residually
  finite and let $(G_k)_{k\in\N}$ be a residual chain of~$G$. Then,
  for every $n\in \N$, we have
  \[
  b_n^{(2)}(G\curvearrowright X) = 
  \lim_{k\to\infty} \frac{b_n(G_k\backslash X)}{[G:G_k]}.
  \]
  Here, $b_n$ denotes the (ordinary) $n$-th Betti number of a
  CW-complex with $\C$-coefficients.
\end{thm}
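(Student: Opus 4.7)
The plan is to follow Lück's original spectral-measure approach, reducing the statement to a matrix-theoretic assertion and then using weak convergence of spectral measures together with an integrality argument to pin down the mass at zero.

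First, I would reduce to the algebraic setting. Using the chain-complex description of $L^2$-Betti numbers, $b_n^{(2)}(G \actson X)$ equals $\ltm{\Delta}{G}$ for an appropriate combinatorial Laplacian $\Delta \in M_{m \times m}(\Z G)$ coming from the boundary maps of the cellular chain complex of $X$. Pushing $\Delta$ forward along $\Z G \to \Z[G/G_k]$ yields self-adjoint matrices $\Delta_k$ over $\Z[G/G_k]$, acting on $\C[G/G_k]^m \cong \C^{m[G:G_k]}$, and an analogous computation identifies $b_n(G_k \backslash X)$ with $\dim_\C \ker \Delta_k$. Thus the claim reduces to
\[
\mu_{\Delta}\bigl(\{0\}\bigr) = \lim_{k\to\infty} \frac{1}{[G:G_k]} \cdot \dim_\C \ker \Delta_k,
\]
where on the right $\frac{1}{[G:G_k]} \dim_\C \ker \Delta_k = \mu_{\Delta_k}^{\mathrm{fin}}(\{0\})$ for the normalised spectral counting measure $\mu_{\Delta_k}^{\mathrm{fin}}$ of $\Delta_k$.

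Next I would establish convergence of all moments. For every polynomial $p \in \R[x]$,
\[
\int p\,d\mu_{\Delta_k}^{\mathrm{fin}} = \frac{1}{[G:G_k]}\tr_\C\bigl(p(\Delta_k)\bigr) = \tr_{\C[G/G_k]}\bigl(p(\Delta_k)\bigr),
\]
and this equals $\trCG(p(\Delta))$ for all $k$ large enough: residuality of the chain $(G_k)$ guarantees that for any fixed finite set of group elements appearing in $p(\Delta)$, only those equal to $e$ in $G$ survive in the image in $\Z[G/G_k]$ once $k$ is sufficiently large. Combining this with Proposition \ref{prop:spec-meas-poly}, it follows that
\[
\int p\,d\mu_{\Delta_k}^{\mathrm{fin}} \xrightarrow{k\to\infty} \int p\,d\mu_{\Delta},
\]
and hence $\mu_{\Delta_k}^{\mathrm{fin}} \to \mu_{\Delta}$ weakly on the common compact interval $[0,\|\ltmul{\Delta}\|]$ (using Weierstrass to pass from polynomials to continuous test functions).

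The main obstacle is to convert this weak convergence into convergence of the point mass at $0$. Weak convergence immediately gives the easy upper bound
\[
\limsup_{k\to\infty} \mu_{\Delta_k}^{\mathrm{fin}}\bigl(\{0\}\bigr) \le \mu_{\Delta}\bigl(\{0\}\bigr),
\]
since $\{0\}$ is closed. The reverse inequality requires controlling how much mass can escape from $\{0\}$ into small intervals $(0,\epsilon)$. Here I would use the integrality of $\Delta_k$: its nonzero eigenvalues are algebraic integers whose product (up to sign) is the nonzero part of the characteristic polynomial's constant term, which is a rational integer of absolute value at most $\|\Delta_k\|^{[G:G_k]m}$. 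This yields a uniform bound of the form
\[
\int_{0^+}^{\|\Delta\|} \log(x)\,d\mu_{\Delta_k}^{\mathrm{fin}}(x) \ge -C
\]
independent of $k$, which in turn forces $\mu_{\Delta_k}^{\mathrm{fin}}((0,\epsilon)) \to 0$ uniformly as $\epsilon \to 0$. Combined with weak convergence on closed intervals $[\epsilon, \|\Delta\|]$, this upgrades the inequality to
\[
\liminf_{k\to\infty} \mu_{\Delta_k}^{\mathrm{fin}}\bigl(\{0\}\bigr) \ge \mu_{\Delta}\bigl(\{0\}\bigr),
\]
completing the proof. I expect the uniform logarithmic estimate, which is essentially the determinant-class property for the approximating sequence, to be the only nontrivial analytic input.
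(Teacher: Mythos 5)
Your argument is correct and is essentially Lück's original proof of this theorem, which the paper imports as a citation rather than proving; it also uses exactly the ingredients appearing in the paper's quantitative variant (Proposition~\ref{prop:quantitative-Lueck}): equality of polynomial traces for sufficiently deep subgroups, weak convergence of the normalised spectral measures, the easy closed-set (portmanteau) inequality for~$\{0\}$, and the uniform logarithmic bound coming from integrality of the matrices $\Delta_k$. One small correction: the common compact interval supporting all the measures should be taken as $[0,\lvert\Delta\rvert_1]$ (the $\ell^1$-norm of the coefficients), not $[0,\|\ltmul{\Delta}\|]$, since the operator norms of the finite quotient Laplacians need not be dominated by $\|\ltmul{\Delta}\|$ (already for free groups the finite quotients have larger spectral radius); the paper makes this same choice in the proof of Proposition~\ref{prop:quantitative-Lueck}. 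Also, in the integrality step the relevant fact is only that the product of the nonzero eigenvalues is a nonzero integer, hence at least $1$ in absolute value, which together with the upper bound $d$ on the spectrum yields $\mu^{\mathrm{fin}}_{\Delta_k}\bigl((0,\epsilon)\bigr)\le m\log d/\lvert\log\epsilon\rvert$ uniformly in $k$; the upper bound on that product that you quote is not needed.
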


The proof relies on the algebraic counterpart:

\begin{thm}[Lück's approximation theorem for matrices~{{%
        \cite[Chapter~13]{lueck_l2}\cite[p.~97]{Kammeyer2019}}}]
  Let $G$ be a countable residually
  finite group and let $A\in M_{n\times n}(\Z G)$ be self-adjoint. Let
  $(G_k)_{k\in\N}$ be a residual chain of~$G$.  For~$k\in \N$,
  let $A_k\in M_{n\times n}(\Z (G/G_k))$ be the image of~$A$ under
  the entrywise projection induced by the canonical projection~$G\to
  G/G_k$. Then, we have
  \[
  \ltm A G = \lim_{k\to\infty} \ltm {A_k}{G/G_k}.
  \]
\end{thm}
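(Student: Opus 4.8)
The plan is to compare the spectral measures of $A$ over $\vN G$ with the spectral measures of the finite-dimensional approximations $A_k$ over $\vN{G/G_k}$, exactly as in the proof of L\"uck's approximation theorem for matrices. Since $\ltm A G = \mu_A(\{0\})$ and $\ltm{A_k}{G/G_k} = \mu_{A_k}(\{0\})$ by Proposition~\ref{prop:spec-meas-kernel}, and since $A_k$ is a finite-dimensional self-adjoint matrix so that $\mu_{A_k}(\{0\})$ is just the normalised multiplicity of the eigenvalue $0$, the goal reduces to controlling how the atom at $0$ behaves under the limit $k\to\infty$.

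First I would fix $K \ge \|\ltmul A\|_\infty$; since the coefficient bound that dominates the operator norm is preserved under the projections $G\to G/G_k$, the same $K$ works for all $A_k$, so all spectral measures live on the common interval $[0,K^2]$ (after replacing $A$ by $AA^*$, or directly on $[0,K]$ in the self-adjoint normalisation). Next, by Proposition~\ref{prop:spec-meas-poly}, for every polynomial $p$ we have $\int p\,d\mu_{A_k} = \trace_{\C[G/G_k]}(p(A_k))$, and the key trace-convergence fact is that $\trace_{\C[G/G_k]}(p(A_k)) \to \trace_{\C G}(p(A))$ as $k\to\infty$: this holds because computing a trace of a polynomial in $A$ only involves finitely many group elements, and for a residual chain each nontrivial such element eventually survives in $G/G_k$ while the trivial element always contributes the same. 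Hence $\mu_{A_k} \to \mu_A$ in the weak-$*$ sense of moments, i.e.\ $\int f\,d\mu_{A_k}\to\int f\,d\mu_A$ for all polynomials $f$, and then for all continuous $f$ on the compact interval.

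The remaining, and genuinely delicate, step is to upgrade weak convergence of measures to convergence of the atoms at $0$, i.e.\ $\mu_{A_k}(\{0\}) \to \mu_A(\{0\})$; this is where the integrality of the matrices is essential, because weak-$*$ convergence alone only gives $\limsup_{k} \mu_{A_k}(\{0\}) \le \mu_A(\{0\})$ by lower semicontinuity of open complements is not automatic — actually one gets $\mu_A(\{0\}) \le \liminf_k \mu_{A_k}([0,\epsilon))$ easily for each $\epsilon>0$, so the real content is the reverse inequality, preventing mass from escaping from a small neighbourhood of $0$ down to the atom only in the limit. The standard resolution, due to L\"uck, is a uniform logarithmic estimate on the Fuglede--Kadison determinants of the $A_k$: because the $A_k$ have integer entries, the product of the nonzero eigenvalues of $A_k$ (counted with multiplicity) is a nonzero integer in absolute value bounded below by $1$, which yields a uniform lower bound $\int_{0^+} \log x\; d\mu_{A_k}(x) \ge -C$ independent of $k$, and this uniform bound forces $\mu_{A_k}((0,\epsilon)) \le C/|\log\epsilon|$ uniformly in $k$. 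Combining this equicontinuity-type control near $0$ with the weak convergence then pins down $\lim_k \mu_{A_k}(\{0\}) = \mu_A(\{0\})$. I expect this determinant/equicontinuity argument to be the main obstacle; everything else is the routine trace-convergence computation for residual chains. Since this is precisely the content of the references cited in the statement (\cite[Chapter~13]{lueck_l2}, \cite[p.~97]{Kammeyer2019}), I would simply invoke L\"uck's theorem in that form rather than reproving it.
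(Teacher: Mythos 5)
The paper does not actually prove this theorem; it is stated with citations to L\"uck and Kammeyer, so there is no in-paper argument to compare against. Your sketch correctly reproduces the standard two-step proof from those references --- weak convergence of the spectral measures $\mu_{A_k}\to\mu_A$ via trace/moment convergence along a residual chain, followed by the uniform logarithmic estimate $\mu_{A_k}\bigl((0,\varepsilon)\bigr)\le C/|\log\varepsilon|$ coming from integrality of the entries (the Fuglede--Kadison determinant lower bound) to prevent mass from escaping to the atom at~$0$ only in the limit --- and you then opt to simply invoke the theorem from the literature, which is exactly what the paper does. One small wobble worth noting: the portmanteau inequality on the closed set $\{0\}$ already gives $\limsup_k\mu_{A_k}(\{0\})\le\mu_A(\{0\})$ for free, so the nontrivial direction is $\liminf_k\mu_{A_k}(\{0\})\ge\mu_A(\{0\})$, and the quantity that the determinant bound actually controls uniformly is $\mu_{A_k}\bigl((0,\varepsilon)\bigr)$, not $\mu_{A_k}\bigl([0,\varepsilon)\bigr)$; your middle sentence has these slightly tangled, though the conclusion you draw from the log bound is correct.
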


In this section, we quantify the rate of convergence in order to
establish effective computability of $L^2$-Betti numbers.
To do so, we need to control the residual chain. We formulate
this in terms of adapted sequences:

\begin{defi}[adapted sequence]
  \label{def:adapted-seq}
  Let $G$ be a countable residually finite group and let $A\in
  M_{n\times n}(\Z G)$ be self-adjoint. Let $(G_k)_{k\in\N}$ be a
  sequence of finite index, normal subgroups of~$G$.  For~$k\in
  \N$, let $A_k\in M_{n\times n}(\Z (G/G_k))$ be the image of~$A$
  under the canonical projection.  We say that the sequence~$(G_k)_{k\in\N}$ is
  \emph{adapted to~$A$} if for all~$k\in \N$,
  all entries of the diagonals of~$A^0, A^1, \dots, A^{k^2}$ have
  support in~$\{e\} \cup G \backslash G_k$. 
  Put differently: On these diagonals, all the coefficients belonging
  to~$G_k\backslash \{e\}$ vanish.
\end{defi}

\begin{lemma}[computability of an adapted sequence]
  \label{lemma:Lueck-computability-adapted-seq}
  There exists an algorithm that, 
  \begin{itemize}
  \item given 
    a residually finite group~$G$, given by a finite presentation~$\genrel SR$,
    and a matrix in~$M_{n\times n}(\Z G)$, represented by an
    element~$A\in M_{n\times n}(\Z [S^*])$,
  \item
    determines a sequence of finite index, normal subgroups~$(G_k)_{k\in\N}$
    of~$G$ that is adapted to~$A$ and outputs the
    sequence~$(\ltm {A_k}{G/G_k})_{k \in \N}$.

    Again, the~$A_k$ denote the images of~$A$ under the 
    projections~$\Z G \to \Z (G/G_k)$.
  \end{itemize}
	        
  In particular, every square matrix over~$\Z G$ admits an adapted
  sequence.
\end{lemma}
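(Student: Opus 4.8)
The plan is to make the residual finiteness of $G$ effective and then search for subgroups that satisfy the adaptedness condition. First I would recall that for a finitely presented residually finite group $G = \genrel SR$, the finite quotients of $G$ are recursively enumerable: one can enumerate all finite groups $Q$ together with all maps $S \to Q$, check (using the finite multiplication table of $Q$) whether the relations in $R$ are satisfied, and thereby enumerate all pairs $(Q, \pi)$ where $\pi: G \to Q$ is a surjective homomorphism onto a finite group. This enumeration is algorithmic (no oracle needed), and by residual finiteness, for every nontrivial $g \in G$ there is some such $\pi$ with $\pi(g) \neq e$; crucially, whether a given word $w \in S^*$ maps to $e$ under a \emph{specified} finite quotient $\pi$ is decidable just by computing in $Q$.

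Next I would construct the sequence $(G_k)_{k \in \N}$ greedily. For each $k$, the condition ``$(G_k)$ is adapted to $A$ at level $k$'' only involves finitely many group elements: namely, the finitely many words appearing on the diagonals of $A^0, A^1, \dots, A^{k^2}$ (computed as elements of $\Z[S^*]$), and among those, the ones with nontrivial coefficient that do \emph{not} represent $e$ in $G$. Call this finite set of words $W_k \subset S^*$. I want a finite-index normal subgroup $G_k$ (equivalently, a finite quotient $\pi_k: G \to Q_k$) such that $\pi_k(w) \neq e$ for every $w \in W_k$ that represents a nontrivial element of $G$. The subtlety is that I cannot decide which $w \in W_k$ are nontrivial in $G$ — but I do not need to: I enumerate finite quotients $(Q, \pi)$ one at a time, and for each one I check whether $\pi(w) \neq e$ for \emph{all} $w \in W_k$ that are not already known to be trivial. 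Actually the clean formulation: I search through the enumeration of finite quotients until I find a $\pi$ such that, for every $w \in W_k$, either $\pi(w) \neq e$, or $w$ is subsequently discovered (by enumerating the — recursively enumerable — word problem of $G$, which holds since $G$ is finitely presented) to equal $e$ in $G$. To make this terminate I would interleave the two enumerations: run the finite-quotient enumeration and the word-problem enumeration in parallel (dovetailing), and accept a candidate quotient $\pi$ as soon as, for each $w \in W_k$, it has been certified that $\pi(w) \neq e$ or that $w = e$ in $G$. This terminates: each $w \in W_k$ is eventually resolved one way or the other (nontrivial $w$'s are separated by some finite quotient by residual finiteness; trivial $w$'s appear in the word-problem enumeration), and only finitely many $w$ need resolving. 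Set $G_k := \ker \pi_k$; one can additionally intersect with $G_{k-1}$ (a finite intersection of finite-index normal subgroups is again finite-index normal, and its quotient is a subquotient we can compute) to make the sequence nested if desired, though the definition of adapted sequence as stated does not require nesting.

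Having produced $\pi_k: G \to Q_k$, computing $\ltm{A_k}{G/G_k}$ is elementary: $A_k \in M_{n\times n}(\Z Q_k)$ lives over a \emph{finite} group, so $(\ell^2(Q_k))^n$ is finite-dimensional, $\ltmul{A_k}$ is an ordinary linear map of the finite-dimensional complex vector space $\C[Q_k]^n$, its kernel dimension is computable by linear algebra over $\Q$ (or $\Q(\zeta)$), and $\dim_{\vN{(G/G_k)}} = \frac{1}{|Q_k|}\dim_\C$. So we output $\ltm{A_k}{G/G_k} = \frac{1}{|Q_k|} \dim_\C \ker\bigl((\ltmul{A_k})\colon \C[Q_k]^n \to \C[Q_k]^n\bigr)$, a rational number. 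The main obstacle is exactly the one addressed above: ensuring the search for a suitable finite quotient $\pi_k$ terminates despite the undecidability (a priori) of which words in $W_k$ are nontrivial — resolved by dovetailing the finite-quotient enumeration with the recursively-enumerable word problem of the finitely presented group $G$, so that every relevant word is eventually resolved and the search halts. The final ``in particular'' clause is immediate: the algorithm's construction of $(G_k)$ works for any square matrix $A$ over $\Z G$ (represented over $\Z[S^*]$), and by inspection the constructed sequence is adapted to $A$, so adapted sequences always exist.
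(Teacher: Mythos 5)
Your proposal is correct, and the core idea coincides with the paper's: use the recursive enumerability of finite quotients of a finitely presented group, compute the finitely many diagonal words of $A^0,\dots,A^{k^2}$, find a finite quotient in which all the nontrivial ones survive, and then compute the von Neumann dimension over the resulting finite group ring by ordinary linear algebra. The ``in particular'' clause and the observation that the adapted sequence need not be nested both match the paper (the latter appears there as a remark right after the lemma).

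The one organizational difference is how the word problem is handled. The paper first invokes McKinsey's theorem -- finitely presented residually finite groups have \emph{decidable} word problem, computed uniformly from the presentation -- so it can decide outright which diagonal words are nontrivial, and then searches for a homomorphism to a symmetric group killing none of them. You instead never decide the word problem as a subroutine; you dovetail the recursively enumerable word problem with the enumeration of finite quotients, accepting a quotient once every diagonal word is either certified trivial or certified to survive. This is, in effect, an inline re-derivation of McKinsey's theorem rather than a citation of it. Both organizations terminate for the same reason (you correctly rely on the fact that a \emph{single} finite quotient can be chosen to separate the entire finite set of nontrivial diagonal words, e.g.\ by taking a product of separating quotients), so the two proofs differ only in packaging, not in substance. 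One small point worth tightening: when you enumerate maps $S \to Q$ satisfying the relators, the induced homomorphism $G \to Q$ need not be surjective, so the finite quotient to use is $G/\ker\pi \cong \langle \pi(S)\rangle \le Q$, which you should compute explicitly (as the paper does) before evaluating $\ltm{A_k}{G/G_k} = \dim_\C \ker(\cdot A_k') / |\langle\pi(S)\rangle|$.
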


\begin{proof}
  With this kind of input, there exists an algorithm that solves the
  word problem in the finitely presented residually finite group~$G$
  with respect to~$S$ and such an algorithm can be determined
  algorithmically from the given presentation~$\genrel SR$
  of~$G$~\cite[Theorem~5.3]{Miller1992}.

  We consider the following algorithm: On input~$k\in \N$, we proceed
  in the steps:
  \begin{itemize}
  \item We compute the matrices~$A^0, A^1, \dots, A^{k^2}$ and the
    elements on their diagonals that are \emph{not} the neutral
    element~$e\in G$ and that have a non-trivial coefficient; this is
    possible through the solution of the word problem. We call these
    the \emph{non-trivial diagonal elements}.
  \item For~$p=1,2,3,\dots$, do the following:
    \begin{itemize}
    \item We enumerate all group homomorphisms~$G\to S_p$, where
      $S_p$ is the symmetric group on~$\{1,\dots, p\}$.
      This can be accomplished by enumerating all maps~$S\to S_p$,
      and then verifying whether the images of the relators
      are trivial.
    \item We check whether the non-trivial diagonal
      elements computed in the first step are \emph{not} mapped
      to~$e\in S_p$ by the group homomorphism~$G\to S_p$.
    \item Once such a group homomorphism~$f:G\to S_p$ is found, we
      continue with it as below.
    \end{itemize}
  \item We enumerate the subgroup~$H \coloneqq  \operatorname{im} f
    = \langle f(s) \mid s\in S\rangle_{S_p}$ of~$S_p$ and its
    composition table.
  \item We write $A_k \coloneqq  f_*(A) \in M_{n\times n} (\Z H)$.
  \item We fix an isomorphism $\Z H \cong \Z^{|H|}$ of $\Z$-modules 
    and rewrite the matrix~$A_k$ in the form~$A_k' \in  M_{n|H|\times n|H|} (\Z)$.
  \item We return~$(\dim_\Q \ker (\cdot A_k')) / |H|$.
  \end{itemize}
  It remains to show that this algorithm terminates and 
  returns the correct output.
  
  The non-trivial diagonal elements computed in the first step are
  finitely many elements of~$G\backslash \{e\}$.  Because $G$ is
  residually finite, there exists a group homomorphism from~$G$ to a
  finite group such that all these elements are \emph{not} in the
  kernel. Because every finite group is contained in some finite
  symmetric group, eventually such a homomorphism~$f: G \to S_p$ is
  found.
	
  Then, the subgroup~$\ker f \subset G$ is normal, of finite
  index, and satisfies the condition on the $k$-th element of a
  sequence adapted to~$A$ (Definition~\ref{def:adapted-seq}). Moreover, 
  $G / \ker f \cong \operatorname{im} f = H$.  Because $H$ is finite,
  we obtain~\cite[Example~2.39]{Kammeyer2019}
  \begin{align*}
    \ltm {A_k}{G/G_k}
    & = \ltm {A_k}{H}
    = \frac{\dim_\C \ker (\cdot A_k')}{|H|}
    = \frac{\dim_\Q \ker (\cdot A_k')}{|H|}.
  \end{align*}
  Therefore, the algorithm outputs a sequence with the claimed properties.
\end{proof}

\begin{remark}[]
  Note that unlike in the statement of Lück's approximation theorem,
  the adapted sequence fixed by the algorithm does not have to be 
  a residual chain.
\end{remark}

For adapted sequences, we can estimate the error of the normalised
Betti numbers of such sequences to the $L^2$-Betti number~$\ltm AG$.

\begin{prop}[Lück's approximation theorem, quantitative version]
  \label{prop:quantitative-Lueck}
  Let $G$ be a countable residually finite group.  Let $A\in
  M_{n\times n}(\Z G)$ be self-adjoint and let $(G_k)_{k\in \N}$ a
  sequence of finite index, normal subgroups that is adapted to $A$
  (see Definition~\ref{def:adapted-seq}). Then, for all~$k\in \N$, we
  have
  \[
  \bigl| \ltm A G - \ltm {A_k} {G/G_k}\bigr|
  \leq
  n \cdot \Bigl(1-\frac{1}{kd}\Bigr)^{k^2}
  + \frac{n\cdot \log d}{\log k}, 
  \] 
  where $d \coloneqq  \|\ltmul{A}\|$ is the operator norm of~$\ltmul{A}$.
\end{prop}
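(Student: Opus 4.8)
The plan is to compare both $\ltm A G$ and $\ltm{A_k}{G/G_k}$ to the same truncated characteristic sum, namely $c(\Delta, d^2)_{k^2}$ computed via the respective spectral measures, exactly as in the proof of Theorem~\ref{thm:LC}. Write $\Delta := A$ (we may assume $A$ self-adjoint as in the Remark after Definition~\ref{def:L2-Betti-numbers}; otherwise replace $A$ by $AA^*$ and double the degree). Since the sequence is adapted to $A$, the traces $\trCG(A^p)$ and $\tr_{\C(G/G_k)}(A_k^p)$ \emph{agree} for all $p\le k^2$: the coefficient at $e$ of each diagonal entry of $A^p$ only sees elements in $\{e\}\cup G\setminus G_k$, and the projection $G\to G/G_k$ is injective there. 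Hence, expanding $(1 - \tfrac{1}{d}A)^{k^2}$ as a polynomial of degree $k^2$, we get the crucial identity
\[
\int_0^{d} \Bigl(1-\frac{x}{d}\Bigr)^{k^2}\,d\mu_A(x)
= \int_0^{d} \Bigl(1-\frac{x}{d}\Bigr)^{k^2}\,d\mu_{A_k}(x),
\]
where on the left $d := \|\ltmul A\|$ bounds the spectrum of $A$ on $\ell^2 G$, and on the right the spectrum of $A_k$ on $\ell^2(G/G_k)$ is also contained in $[0,d]$ (the operator norm does not increase under the projection, since $\|A_k\|\le$ the $\ell^1$-coefficient bound $\le \|\ltmul A\|$ — or one argues directly that $(1-x/d)^{k^2}\ge 0$ suffices without this, restricting the right-hand integral to $[0,d]$).

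Next I would run the two-sided estimate for each measure separately. For $\mu_A$, splitting $[0,d]$ into $\{0\}$, $(0,1/k)$, and $[1/k,d]$ and using monotonicity of $x\mapsto(1-x/d)^{k^2}$ gives
\[
\mu_A(\{0\}) \le \int_0^d\Bigl(1-\frac{x}{d}\Bigr)^{k^2}d\mu_A(x)
\le \mu_A(\{0\}) + \mu_A\bigl((0,1/k)\bigr) + n\cdot\Bigl(1-\frac{1}{kd}\Bigr)^{k^2},
\]
using $\mu_A([0,d]) = \trCG(\mathrm{id}) = n$ for the last term (Proposition~\ref{prop:spec-meas-poly} with $p=1$). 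The same double inequality holds verbatim with $\mu_{A_k}$ and $\ltm{A_k}{G/G_k} = \mu_{A_k}(\{0\})$ in place of $\mu_A(\{0\})$. Subtracting the two chains of inequalities and using the integral identity above, the $c(A,d^2)_{k^2}$-terms cancel and we obtain
\[
\bigl|\ltm A G - \ltm{A_k}{G/G_k}\bigr|
\le \mu_A\bigl((0,1/k)\bigr) + \mu_{A_k}\bigl((0,1/k)\bigr) + n\cdot\Bigl(1-\frac{1}{kd}\Bigr)^{k^2}.
\]

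It then remains to bound the two small-interval masses $\mu_A((0,1/k))$ and $\mu_{A_k}((0,1/k))$, and here is where I expect the genuine work to lie — a Fuglede--Kadison-style estimate rather than pure bookkeeping. The idea is that $\int_{0^+}^{1}\log x\,d\mu(x)$ is controlled uniformly: for $\mu_A$ one uses that the coefficients of $A$ are integers, so the Fuglede--Kadison determinant is bounded below by a quantity depending only on $n$ and $d$ (this is the standard integrality argument, e.g.\ via $\ln\det A_k \ge 0$ for the finite quotients and a limiting/lower-semicontinuity argument, or directly Lück's estimate $\int_{0^+}^1 \log x\,d\mu_A \ge -C(n,d)$); for $\mu_{A_k}$, since $G/G_k$ is finite, the spectral measure is supported on eigenvalues of the integer matrix $A_k'$, and $\ln\det$ of the nonzero part is $\ge 0$ because $\det$ of an integer matrix restricted to its non-kernel is a nonzero integer, hence $\ge 1$ in absolute value after normalisation — giving $\int_{0^+}^{1}\log x\,d\mu_{A_k}(x) \ge -\log d \cdot \mu_{A_k}([0,1)) \ge -\log d \cdot n$ type control, actually one wants $\int_{0^+}^1 \log x\, d\mu_{A_k} \ge -n\log d$ via $\ln\det A_k\ge 0$ together with $\int_1^d \log x\,d\mu_{A_k}\le n\log d$. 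In either case, Chebyshev's inequality applied on $(0,1/k)$ where $\log x \le -\log k$ yields
\[
\mu_A\bigl((0,1/k)\bigr) \le \frac{1}{\log k}\bigl|\textstyle\int_{0^+}^{1/k}\log x\,d\mu_A(x)\bigr| \le \frac{n\log d}{\log k},
\]
and likewise for $\mu_{A_k}$. Plugging these two bounds into the displayed estimate above gives exactly $n(1-1/(kd))^{k^2} + (n\log d)/\log k$ after combining, which is the claim. The main obstacle, to reiterate, is pinning down the uniform lower bound $-n\log d$ on $\int_{0^+}^1\log x\,d\mu$ for both $A$ and the finite quotients $A_k$ simultaneously; everything else is the now-familiar spectral truncation argument from Theorem~\ref{thm:LC} combined with the adaptedness identity.
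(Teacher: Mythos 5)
Your approach mirrors the paper's own proof step for step: the polynomial $p_k(x) = (1-x/d)^{k^2}$, the trace identity $\int p_k\,d\mu_A = \int p_k\,d\mu_{A_k}$ from adaptedness, the split of the spectral integral at~$1/k$, and the logarithmic spectral estimates (which the paper cites from L\"uck and Kammeyer rather than re-deriving; your sketch of them via integrality of the pseudo-determinant is fine for the finite quotients~$\mu_{A_k}$, and you correctly flag that $\mu_A$ needs L\"uck's limiting argument). However, the combination step has a factor-of-two slip. From $\mu(\{0\}) \le I \le \mu(\{0\}) + E(\mu)$ for both $\mu\in\{\mu_A,\mu_{A_k}\}$ with the common integral~$I$, the correct conclusion is $|\mu_A(\{0\}) - \mu_{A_k}(\{0\})| \le \max\{E(\mu_A), E(\mu_{A_k})\}$, \emph{not} the sum. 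Your displayed inequality with $\mu_A((0,1/k)) + \mu_{A_k}((0,1/k))$ is true but, once the log bounds are plugged in, only yields $2n\log d/\log k$, twice the claimed constant. The paper sidesteps this by proving the two one-sided estimates separately, so that each direction costs a single log-bound term (for the inequality $\ltm{A_k}{G/G_k} \ge \ltm AG - \cdots$ only the bound on~$\mu_{A_k}$ enters, for the reverse only the bound on~$\mu_A$). Either argue the two directions separately or replace your sum by the max; as written, the final line does not ``give exactly'' the claim.

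There is also a sign error in your justification that the spectrum of~$A_k$ fits in~$[0,d]$: you write that $\|\ltmul{A_k}\| \le$ ``the $\ell^1$-coefficient bound $\le \|\ltmul A\|$'', but the $\ell^1$-norm~$|A|_1$ is an \emph{upper} bound for $\|\ltmul A\|$, not a lower one, and in general $\|\ltmul{A_k}\|$ need not be $\le \|\ltmul A\|$ for an arbitrary quotient. What actually works — and what the paper's own proof uses, its wording of the statement notwithstanding — is that both $\|\ltmul A\|$ and $\|\ltmul{A_k}\|$ are dominated by~$|A|_1$, so one should take $d$ to be this common bound (or any common upper bound for both operator norms). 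Your parenthetical remark that ``$p_k \ge 0$ suffices'' does not rescue this on its own: the estimate $\int_{1/k}^{d} p_k\,d\mu_{A_k} \le n\cdot p_k(1/k)$ requires the support of~$\mu_{A_k}$ to lie in~$[0,d]$, where $p_k$ is monotone decreasing.
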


\begin{proof}
  Let $k\in \N$. As before, we denote by~$\mu_A$ the spectral measure
  of~$A$ and by~$\mu_{A_k}$ the spectral measure of~$A_k\in M_{n\times
    n}(\Z (G/G_k))$.  We can view both measures as measures on the
  same interval~$[0,d]$ as $\|\ltmul {A_k}\|$ and $\|\ltmul A\|$ are
  bounded by~$|A|_1$, i.e., by the sum of the absolute values of the
  coefficients in~$A$. Without loss of generality, we assume~$d \ge 1$.
	
  We consider the polynomials $p_k \coloneqq  (1- \frac{1}{d}\cdot x)^{k^2}$.
  Because the sequence~$(G_k)_{k\in \N}$ is adapted to~$A$, and $p_k$
  is a polynomial of degree~$k^2$, the coefficients on the diagonals
  of $p_k(A)$ and~$p_k(A_k)$ belonging to the neutral element in the
  group ring are identical. Hence we have
  $\trCG\bigl(p_k(A)\bigr)
  = \trC{(G/G_k)}\bigl(p_k(A_k)\bigr)
  .
  $
  Rewriting this equality using Proposition~\ref{prop:spec-meas-poly}  
  yields
  \begin{equation}
    \label{eq:Lueck-poly-integration-coincide}
    \int_0^d p_k(x) \;d\mu_A(x) = \int_0^d p_k(x) \; d\mu_{A_k}(x)
  \end{equation}
  
  A further ingredient for this proof are logarithmic bounds for
  spectral measures: for all~$\lambda \in (0,1)$, we have~\cite[Proof
    of Theorem~2.3.1]{Lueck1994}\cite[Proposition~5.18]{Kammeyer2019}
  \[\mu_{A_k} \bigl((0,\lambda)\bigr) \le \frac{n \cdot \log d}{|\log \lambda|}.
  \]
  Moreover, also $\mu_A$ has a logarithmic bound:
  For all~$\lambda \in (0,1)$, we have~\cite[Theorem~2.3(3)]{Lueck1994}
  \[\mu_{A} \bigl((0,\lambda)\bigr)
  \le \frac{n \cdot \log d}{|\log \lambda|}.
  \]  
  We now calculate for $k \ge 2$ (plus and minus-signs in the bounds of an integral
  suggest that we integrate over (half-)open intervals):
  \allowdisplaybreaks
  \begin{align*}
    \ltm {A_k}{G/G_k}
    \hspace{-2cm}
    \\
    & = \mu_{A_k}\bigl(\{0\}\bigr)
    & (\text{Proposition~\ref{prop:spec-meas-kernel}})\\
    &= \int_0^d p_k(x)\; d\mu_{A_k} - \int_{0^+}^d p_k(x) \; d\mu_{A_k}\\
    &= \int_0^d p_k(x)\; d\mu_A - \int_{0^+}^d p_k(x)\;d\mu_{A_k}&(\text{Equation~\ref{eq:Lueck-poly-integration-coincide}})\\
    &= \int_0^d p_k(x)\; d\mu_A - \int_{0^+}^{1/k^-} p_k(x)\;d\mu_{A_k}- \int_{1/k}^{d} p_k(x)\;d\mu_{A_k}
    \hspace{-1cm}
    \\
    &\ge \int_0^d p_k(x)\; d\mu_A - \mu_{A_k}((0,1/k)) - \int_{1/k}^d p_k(x)\; d\mu_{A_k}
    & (p_k(x) \le 1)\\
    &\ge \int_0^d p_k(x)\; d\mu_A - \frac{n\cdot \log d}{\log k} 
    - \int_{1/k}^d p_k(x) \; d\mu_{A_k}
    & (\text{log bound}, \lambda = 1/k)\\
    &\ge \int_0^d p_k(x)\; d\mu_A - \frac{n\cdot \log d}{\log k} 
    - \int_{1/k}^d p_k(1/k)\; d\mu_{A_k}
    &
    (p_k(x) \text{ is mon. decreasing})\\
    &\ge \int_0^d p_k(x)\; d\mu_A - \frac{n\cdot \log d}{\log k} 
    - \mu_{A_k} \bigl([1/k, d]\bigr) \cdot p_k(1/k)
    \hspace{-2cm}
    \\
    &\ge \int_0^d p_k(x)\; d\mu_A - \frac{n\cdot \log d}{\log k} 
    - \mu_{A_k} \bigl([0, d]\bigr) \cdot p_k(1/k)
    &(\text{monotonicity of }\mu_{A_k})\\
    &= \int_0^d p_k(x)\; d\mu_A - \frac{n\cdot \log d}{\log k} 
    - n \cdot \Bigl(1-\frac{1}{kd}\Bigr)^{k^2}
    &(\text{Proposition~\ref{prop:spec-meas-poly}}, p=1)\\
    &\ge \mu_A(\{0\}) -  \frac{n\cdot \log d}{\log k} 
    - n \cdot \Bigl(1-\frac{1}{kd}\Bigr)^{k^2}
    & (\text{monotonicity}, p_k(0) =1)\\
    &= \ltm AG -  \frac{n\cdot \log d}{\log k} 
    - n \cdot \Bigl(1-\frac{1}{kd}\Bigr)^{k^2}
    & (\text{Proposition~\ref{prop:spec-meas-kernel}})
  \end{align*}
  Therefore, we obtain
  \[
  \ltm A G - \ltm {A_k}{G/G_k}
  \leq 
  n \cdot \Bigl(1-\frac{1}{kd}\Bigr)^{k^2} + \frac{n\cdot \log d}{\log k}
  .
  \]
  Analogously, we can prove the lower bound by a similar calculation,
  exchanging the roles of $A$ and~$A_k$. Notice that in that argument,
  the logarithmic bound for~$A$ will then replace the one for~$A_k$.
\end{proof}

\begin{cor}
  \label{cor:use-Lueck-quantitative}
  There is an algorithm that
  \begin{itemize}
  \item given a residually finite group~$G$, given by a finite presentation~$\genrel SR$,
    and 
    a self-adjoint element in~$A \in M_{n\times n}(\Z G)$, given as an element in~$M_{n\times n}(\Z [S^*])$,
  \item
    outputs a computable sequence witnessing that $\ltm AG$ is effectively 
    computable, i.e., an algorithm for computing a sequence~$(q_k)_{k\in \N}$
    such that for all~$k\in \N$, we have
    \[\bigl|\ltm A G - q_k\bigr| \le 2^{-k}.\]
  \end{itemize}
  In particular, in this situation, the $L^2$-Betti number $\ltm AG$
  is effectively computable.
\end{cor}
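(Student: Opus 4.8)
The plan is to assemble the corollary from the pieces already in place: the quantitative Lück estimate of Proposition~\ref{prop:quantitative-Lueck} gives an explicit, computable error bound for the normalised Betti numbers along an adapted sequence, and Lemma~\ref{lemma:Lueck-computability-adapted-seq} shows that such a sequence together with the values $\ltm{A_k}{G/G_k}$ can actually be produced by an algorithm from the finite presentation. Concretely, I would first invoke Lemma~\ref{lemma:Lueck-computability-adapted-seq} to obtain, from the input data $\genrel SR$ and $A \in M_{n\times n}(\Z[S^*])$, a computable sequence $(a_k)_{k\in\N}$ of rationals with $a_k = \ltm{A_k}{G/G_k}$ for an adapted sequence $(G_k)_{k\in\N}$. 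The operator norm bound $d$ is not known exactly, but $|A|_1$ (the sum of absolute values of the coefficients of $A$) is computable from the input and satisfies $|A|_1 \ge \|\ltmul A\|$; replacing $d$ by $\max\{|A|_1, 2\}$ in the estimate of Proposition~\ref{prop:quantitative-Lueck} only weakens it (the right-hand side is increasing in $d$ once $d\ge 1$), so we may and do use this computable $d$ throughout.

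Next I would make the error bound effective. By Proposition~\ref{prop:quantitative-Lueck}, for all $k\ge 2$,
\[
\bigl|\ltm A G - a_k\bigr| \le n\cdot\Bigl(1-\frac{1}{kd}\Bigr)^{k^2} + \frac{n\cdot\log d}{\log k} =: E(k),
\]
and $E(k)\to 0$ as $k\to\infty$ since $(1-\tfrac{1}{kd})^{k^2}\to 0$ and $\tfrac{1}{\log k}\to 0$. The function $k\mapsto E(k)$ is computable (it only involves the known integers $n$, the computable $d$, and elementary real functions evaluated to sufficient precision), so I can compute, for each target precision $2^{-k}$, an index $m(k)$ with $E(m(k)) \le 2^{-(k+1)}$, searching $m = 2, 3, \dots$ until the (rational over-)estimate of $E(m)$ drops below $2^{-(k+1)}$; this search terminates because $E(m)\to 0$. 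Then $a_{m(k)}$ is within $2^{-(k+1)} < 2^{-k}$ of $\ltm A G$. Setting $q_k := a_{m(k)}$ yields a computable rational sequence with $\bigl|\ltm A G - q_k\bigr| \le 2^{-k}$ for all $k$, which is exactly the witness required; the final sentence (effective computability of $\ltm AG$) then follows by Definition~\ref{def:EC}.

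The only mild subtlety — and the step I would flag as needing a word of care rather than a genuine obstacle — is handling $d$: the estimate in Proposition~\ref{prop:quantitative-Lueck} is stated with $d = \|\ltmul A\|$, which we cannot compute, so one must observe that the proof there only uses $d \ge \|\ltmul A\|$ together with $d\ge 1$ (the bound on $\mu_A$ and $\mu_{A_k}$ holds on $[0,d]$ for any such $d$, and $p_k(x) = (1-\tfrac{x}{d})^{k^2}$ is still in $[0,1]$ and monotone decreasing on $[0,d]$), and that the right-hand side $n(1-\tfrac{1}{kd})^{k^2} + \tfrac{n\log d}{\log k}$ is monotone increasing in $d$ for $d\ge 1$, so enlarging $d$ to a computable overestimate such as $\max\{|A|_1,2\}$ keeps the inequality valid. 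With this remark in place, everything else is routine bookkeeping and the corollary follows.
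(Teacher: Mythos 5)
Your proposal follows the paper's proof exactly: apply Lemma~\ref{lemma:Lueck-computability-adapted-seq} to produce the sequence of normalised Betti numbers of an adapted sequence, then use Proposition~\ref{prop:quantitative-Lueck} to obtain a computable error bound tending to zero and extract a subsequence at precision~$2^{-k}$. The extra care you take in replacing the non-computable~$d = \|\ltmul A\|$ by the computable overestimate~$\max\{|A|_1,2\}$ and checking that the bound in Proposition~\ref{prop:quantitative-Lueck} is monotone increasing in~$d$ for~$d\ge 1$ is a correct and worthwhile refinement that the paper's terse proof leaves implicit.
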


\begin{proof}
  By Lemma~\ref{lemma:Lueck-computability-adapted-seq}, there is an
  algorithm that given these data, computes a sequence~$(G_k)_{k\in
    \N}$ adapted to~$A$ and outputs
  \[
  \bigl(\ltm {A_k}{G/G_k}\bigr)_{k\in \N}.
  \]
  By the quantitative version of Lück's approximation theorem
  (Proposition~\ref{prop:quantitative-Lueck}), we can estimate the
  difference between these numbers and the $L^2$-Betti number~$\ltm
  AG$.  These estimates are computable and tend to zero
  for~$k\to\infty$.  Thus, we can algorithmically choose a subsequence
  witnessing the effective computability of~$\ltm AG$.
\end{proof}

\begin{remark}[]
  In fact, the result of
  Corollary~\ref{cor:use-Lueck-quantitative} that $L^2$-Betti numbers of
  finitely presented, residually finite groups are effectively
  computable also follows from
  Theorem~\ref{thm:L2-computable-det-class}, as residually finite
  groups are sofic (and thus satisfy the determinant class
  conjecture). Moreover, finitely presented, residually finite groups
  have a solvable word problem~\cite[Theorem~5.3]{Miller1992}, i.e., a
  word problem of Turing degree~$\mathbf{0}$.
  
  The approach in this section shows that we can also use sequences as
  in Lück's approximation theorem to computably approximate
  $L^2$-Betti numbers.
\end{remark}

\section{\texorpdfstring{$L^2$-Torsion}{L2-Torsion}}
\label{sec:torsion}

The $L^2$-torsion is the torsion invariant associated with the
dimension~$\dim_{\vN G}$~\cite[Chapter~3]{lueck_l2}: The $L^2$-torsion
of an $L^2$-acyclic Hilbert chain complex is the negative of the
alternating sum of the Fuglede--Kadison determinants of the boundary
operators. In the presence of positive Novikov--Shubin invariants, one
can use the characteristic sequences (Definition~\ref{def:charseq}) to
conclude computability of values of $L^2$-torsion, similarly to
Theorem~\ref{thm:L2-computable-det-class}. We formulate and prove this
statement in the context of $L^2$-torsion of spaces.

\begin{thm}\label{thm:torsion}
  Let $G$ be a finitely generated group with word problem
  of Turing degree at most~$\mathbf{a}$. Let $X$ be a finite free $G$-CW-complex
  all of whose $L^2$-Betti numbers are zero and all of whose
  Novikov--Shubin invariants are positive. Then the
  $L^2$-torsion~$\ltt(G \actson X)$ is $\mathbf{a}$-computable. 
\end{thm}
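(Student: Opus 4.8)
The plan is to reduce the statement to the computability of a finite collection of Fuglede--Kadison determinants $\ln\det(\Delta_p)$, where $\Delta_p$ is the $p$-th combinatorial Laplacian of the cellular $L^2$-chain complex of $X$. Since $X$ is a finite free $G$-CW-complex, fixing a cellular basis identifies each $\Delta_p$ with a self-adjoint matrix over $\Z G$, presented over $\Z[S^*]$ using a word-problem oracle of degree $\mathbf{a}$ as in Section~\ref{sec:presenting-matrices}; and $\ltt(G\actson X)=-\tfrac12\sum_p(-1)^p\cdot p\cdot\ln\det(\Delta_p)$ (up to the usual sign/normalisation conventions). Because all $L^2$-Betti numbers of $X$ vanish, each $\mu_{\Delta_p}(\{0\})=0$, so there is no atom at $0$ and $\ln\det(\Delta_p)=\int_{0^+}^{d}\log x\,d\mu_{\Delta_p}(x)$ with $d:=\|\ltmul{\Delta_p}\|$, a finite number computable from $A$. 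It therefore suffices to produce, for each $p$, an $\mathbf{a}$-computable sequence effectively converging to $\ln\det(\Delta_p)$; a finite rational-linear combination of such sequences then effectively converges to $\ltt(G\actson X)$, giving $\mathbf{a}$-computability.

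The core estimate splits the integral as $\int_{0^+}^{\varepsilon}\log x\,d\mu_{\Delta_p}(x)+\int_{\varepsilon}^{d}\log x\,d\mu_{\Delta_p}(x)$ for small $\varepsilon>0$. On $[\varepsilon,d]$ the integrand is bounded and $\log x$ can be uniformly approximated by polynomials; by Proposition~\ref{prop:spec-meas-poly} the polynomial moments are traces of powers of $\Delta_p$ over $\C G$, hence $\mathbf{a}$-computable via Proposition~\ref{prop:word-problem-vs-trace}, so this truncated piece is $\mathbf{a}$-computably approximable to any prescribed accuracy once $\varepsilon$ is fixed (one also needs to control $\mu_{\Delta_p}([\varepsilon,d])\le\mu_{\Delta_p}([0,d])=n_p$, the number of $p$-cells, for the truncation error of the polynomial approximation). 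The tail $\int_{0^+}^{\varepsilon}\log x\,d\mu_{\Delta_p}(x)$ is the term requiring the Novikov--Shubin hypothesis: positivity of the $p$-th Novikov--Shubin invariant $\alpha_p>0$ gives, for every $0<\beta<\alpha_p$, a constant $C$ and a threshold with $\mu_{\Delta_p}\bigl((0,\lambda)\bigr)\le C\lambda^{\beta}$ for all small $\lambda$. Integrating by parts (or by the layer-cake formula) this yields $\bigl|\int_{0^+}^{\varepsilon}\log x\,d\mu_{\Delta_p}(x)\bigr|\le C'\,\varepsilon^{\beta}(1+|\log\varepsilon|)\to 0$ as $\varepsilon\to 0$, with an \emph{explicit} rational bound once $\beta$ and $C$ are known. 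Combining, for each target precision $2^{-k}$ one chooses $\varepsilon$ making the tail bound $<2^{-k-1}$, then $\mathbf{a}$-computes the truncated polynomial-moment piece to within $2^{-k-1}$; the resulting value lies within $2^{-k}$ of $\ln\det(\Delta_p)$, and the whole procedure is an $\mathbf{a}$-algorithm.

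The main obstacle is effectivity of the Novikov--Shubin input: the hypothesis ``$\alpha_p>0$'' is a qualitative statement, whereas the algorithm needs a concrete rational $\beta>0$ and a concrete constant $C$ (and threshold) to turn ``$\mu_{\Delta_p}((0,\lambda))\le C\lambda^\beta$'' into a computable error bound. This is exactly analogous to the role played by the certifying rational number $q$ in Lemma~\ref{lemma:certify-DCC} and Theorem~\ref{thm:L2-computable-det-class}: there, being of determinant class is qualitative and a rational witness is fed to the algorithm. Here one can proceed in the same spirit --- either supply such data as part of the input, or observe that from $\alpha_p>0$ \emph{some} rational $\beta$ and constant $C$ exist (by definition of the Novikov--Shubin invariant as a $\limsup$ of a ratio of logarithms), so a non-uniform algorithm of degree $\mathbf a$ exists: hard-code the finitely many witnesses $(\beta_p,C_p)$ for $p=0,\dots,\dim X$ into the machine. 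Since the theorem only asserts that the single real number $\ltt(G\actson X)$ is $\mathbf a$-computable (not a uniform algorithm over all such $X$), this non-uniform hard-coding is legitimate, and the remaining steps --- the polynomial approximation of $\log$ on $[\varepsilon,d]$, the trace computations via the word problem, and the assembly into an effectively convergent rational sequence --- are routine.
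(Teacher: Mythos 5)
Your reduction to the $\mathbf a$-computability of each $\ln\det(\Delta_p)$, and your treatment of the Novikov--Shubin constants as non-uniform data (hard-coded, not computed), agree with the paper and are both correct; the paper likewise takes rationals $M,\alpha,C$ as given rather than algorithmically producing them, which is legitimate since the statement only concerns a single real.

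Where you diverge is the core estimate. The paper does not re-derive anything from the spectral integral: it directly cites L\"uck's combinatorial estimate~[Theorem~3.172(5)]{lueck\_l2}, which, in the $L^2$-acyclic case with positive Novikov--Shubin invariant, asserts
\[
0 \;\le\; -2\ln\det\bigl(R^{(2)}_{\Delta_p}\bigr) + 2c_p\ln M - \sum_{k=1}^{K}\tfrac1k\,c(\Delta_p,M)_k \;\le\; \frac{C}{K^{\alpha}},
\]
so the whole computable sequence is just a partial sum of the characteristic sequence with an explicit polynomial decay rate. You instead split $\int_{0^+}^{d}\log x\,d\mu$ at a threshold $\varepsilon$, control the tail using the Novikov--Shubin bound $\mu((0,\lambda))\le C\lambda^\beta$ via Fubini/layer-cake, and approximate the main piece by polynomial moments computed as traces. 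Conceptually this is the content behind L\"uck's Theorem~3.172(5), so your argument is a self-contained re-derivation; the trade-off is that you must work a bit harder to make the moment computation match what traces actually give you.

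That extra work is where the one genuine gap sits. You propose to approximate $\log$ uniformly by a polynomial $p$ on $[\varepsilon,d]$ and then evaluate $\trCG(p(\Delta_p))$, but $\trCG(p(\Delta_p))=\int_0^{d}p\,d\mu$, not $\int_{\varepsilon}^{d}p\,d\mu$; the discrepancy $\int_0^{\varepsilon}p\,d\mu$ is not automatically small, because a polynomial constrained only on $[\varepsilon,d]$ can grow on $[0,\varepsilon)$, while the Novikov--Shubin bound only gives you smallness of the measure, not of the integrand. The fix is easy and standard: approximate instead the continuous truncation $f_\varepsilon(x):=\max(\log x,\log\varepsilon)$ uniformly on all of $[0,d]$, so that $\|p\|_{\infty,[0,d]}$ is controlled by $|\log\varepsilon|+|\log d|+\delta$; then $\int_0^{d}p\,d\mu$ is close to $\int_0^{d}f_\varepsilon\,d\mu = (\log\varepsilon)\mu((0,\varepsilon]) + \int_{\varepsilon}^{d}\log x\,d\mu$, and the first summand is bounded by $C|\log\varepsilon|\varepsilon^\beta$, which your tail estimate already absorbs. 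With that modification your argument closes; alternatively, citing L\"uck's ready-made estimate as the paper does avoids the issue entirely.
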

\begin{proof}
  Under the given hypothesis, $X$ is
  det-$L^2$-acyclic~\cite[Theorem~3.93(7)]{lueck_l2}
  and so 
  \begin{align*}
    \ltt(G \actson X)
    & = \ltt\bigl(\ltch * {G \actson X} \bigr)
    \\
    & = -\frac12 \cdot
    \sum_{p=0}^N (-1)^p \cdot p \cdot
    \ln \det ( R^{(2)}_{\Delta_p});
  \end{align*}
  here, $N \coloneqq  \dim X$ and $\Delta_p \coloneqq  A_pA_p^* + A_{p+1}^*A_{p+1}$ is
  the cellular Laplacian~\cite[Lemma~3.30]{lueck_l2} associated with
  matrices~$A_p$ describing the cellular boundary operator of~$\ltch *
  {G \actson X}$ in degree~$p$ (with respect to cellular bases chosen
  in each degree) .

  The set of $\mathbf{a}$-computable reals is closed under
  addition and subtraction; this can be seen as in the
  case~$\mathbf{a} = \mathbf{0}$~\cite[Theorem~1.2(5)]{Zheng2003}.
  Therefore, it suffices
  to show that $\ln\det(R^{(2)}_{\Delta_p})$ is $\mathbf{a}$-computable
  for each~$p \in \{0,\dots, N\}$.

  So, let $p \in \{0,\dots, N\}$ and let $\alpha(R^{(2)}_{\Delta_p})$
  denote the $p$-th Novikov--Shubin invariant of~$\Delta_p$.
  As the Novikov--Shubin invariants of~$X$ are positive, also
  $\alpha(R^{(2)}_{\Delta_p})$ is positive~\cite[Lemma~2.17]{lueck_l2}.
  Thus, there exist~$M, \alpha \in \Q$ with
  \begin{align*}
    \| R_{\Delta_p}^{(2)}\|_\infty
    < M < \infty
    \qand
    0
    < \alpha
    < \alpha(R^{(2)}_{\Delta_p}).
  \end{align*}
  Let $c_p$ denote the number of $p$-cells of~$X$. 
  Then, the combinatorial description of $L^2$-torsion and the fact
  that $\dim_{\vN G} \ker(R^{(2)}_{\Delta_*}) = \ltb *(G \actson
  X) = 0$ shows that there exists a~$C \in \Q_{>0}$ such
  that~\cite[Theorem~3.172(5)]{lueck_l2}:
  \[ \fa{K \in \N}
  0 \leq -2 \cdot \ln \bigl(\det(R^{(2)}_{\Delta_p}) \bigr)
  + 2 \cdot c_p \cdot \ln M
  - \sum_{k=1}^K \frac1k \cdot c(\Delta_p,M)_k 
  \leq \frac C{K^\alpha}
  \]
  Because the word problem of~$G$ is of degree~$\mathbf{a}$,
  because logarithms of rationals are computable~\cite{Egbert1978},
  and by Proposition~\ref{prop:word-problem-vs-trace}, 
  the sequence
  \[ \biggl( c_p \cdot \ln M -\frac12 \cdot \sum_{k=1}^K \frac1k \cdot c(\Delta_p,M)_k 
     \biggr)_{K \in \N}
  \]
  is of degree~$\mathbf{a}$. 
  Moreover, the error estimate sequence~$(1/2 \cdot C/K^\alpha)_{K \in \N}$ is computable.
  Therefore, $\ln(\det(R^{(2)}_{\Delta_p}))$ is an $\mathbf{a}$-computable
  number (Proposition~\ref{prop:char-computability}).
\end{proof}

\section{\texorpdfstring{The computation of $L^2$-Betti numbers of groups}{The computation of L2-Betti numbers of groups}}
\label{sec:computation-L2-Betti-of-groups}

In analogy with work of Nabutovsky and Weinberger on the computation
of ordinary Betti numbers of finitely presented
groups~\cite{nabutovsky_weinberger}, we obtain:

\begin{thm}\label{thm:algoL2B}
  Let $\mathbf{a}$ be a Turing degree. Then there exists an
  algorithm of Turing degree at most~$\mathbf a^{(4)}$ that
  \begin{itemize}
  \item given a finitely generated group~$G$ and a finite generating set~$S$
    of~$G$, together with an algorithm of degree at most~$\mathbf a$ solving
    the word problem for~$G$ with respect to~$S$, and given~$k\in \N$,
  \item computes the binary expansion for~$\ltb k (G)$.
  \end{itemize}
\end{thm}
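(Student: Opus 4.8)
The plan is to combine the approximation of $\ltb k(G)$ by characteristic sequences (which requires only the word problem) with the general machinery for extracting binary expansions of iterated limits from Section~\ref{subsec:comp}. Recall that $\ltb k(G) = \ltm{\Delta_k}{G}$, where $\Delta_k$ is the cellular Laplacian of the universal cover of a classifying space; but we are only given $G$ and its word problem, not such a space, so the first task is to build a suitable matrix from $G$ alone. Following Nabutovsky--Weinberger, I would use the bar resolution (or, equivalently, a small explicit free resolution of $\Z$ over $\Z G$ in degrees $\le k+1$): its boundary maps are matrices $A_k \in M_{m\times n}(\Z G)$ with \emph{explicitly computable} entries (words in $S^*$), and the corresponding Laplacian $\Delta_k := A_k A_k^* + A_{k+1}^* A_{k+1}$ satisfies $\ltb k(G) = \ltm{\Delta_k}{G} = \dim_{\vN G}\ker(\ltmul{\Delta_k})$. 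Because only finitely many bar-resolution generators are involved up to degree $k+1$, this matrix is produced by an algorithm of degree $\mathbf 0$ from $S$ and $k$; the subtlety is that the bar resolution is infinitely generated in each degree, so one must instead use a finite portion of it, which still computes the right homology in degree $k$ — this is exactly the reduction Nabutovsky--Weinberger perform, and I would cite it.

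Next I would invoke Lemma~\ref{lem:dimkeralgo}: with an oracle of degree $\mathbf a$ for the word problem, there is an algorithm of degree $\mathbf a$ that, given $\Delta_k$ (presented over $\Z[S^*]$), outputs the monotone decreasing characteristic sequence $p \mapsto c(\Delta_k, K)_p$ of rationals, which converges to $\ltb k(G)$ by Proposition~\ref{prop:char-seq}. This realises $\ltb k(G)$ as
\[
  \ltb k(G) = \lim_{p\to\infty} c(\Delta_k,K)_p,
\]
a single limit of a degree-$\mathbf a$ rational sequence that either converges in $\R$ or (if $G$ has no finite-type classifying space, or more precisely if the relevant homology is infinite-dimensional) equals $+\infty$. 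Now apply Lemma~\ref{lemma:algo-iterated-limits-2} with $k=1$ (the single-limit case): it provides an algorithm of degree at most $\mathbf a^{(2)}$ that outputs the binary expansion of this limit or detects that it is $+\infty$.

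The bound $\mathbf a^{(2)}$ from this route is better than the claimed $\mathbf a^{(4)}$, so I expect the actual argument must replace the direct construction of $\Delta_k$ by something more expensive — the honest obstruction, as in Nabutovsky--Weinberger, is that one does not get a finite presentation or a finite-type model for $G$ for free: one must first \emph{search} for an appropriate finite sub-presentation/finite skeleton realising $H_k$ correctly, and verifying that a candidate truncation computes the right homology (rather than an artefact of truncation) is itself a $\Pi$-type condition requiring extra jumps. Concretely, I would (i) enumerate candidate finite truncations of the standard resolution; (ii) for each, form the associated Laplacian and run the degree-$\mathbf a$ characteristic-sequence algorithm to get a limit $\ell_N \in \R \cup \{+\infty\}$; (iii) show $\ell_N \to \ltb k(G)$ (or stabilises at $+\infty$) as the truncation grows, and that the convergence can be certified with a bounded number of additional jumps. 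Feeding this double-indexed array $q(N,p) = c(\Delta_k^{(N)}, K)_p$ into Lemma~\ref{lemma:algo-double-limits-to-limits} and then Lemma~\ref{lemma:algo:divergence} and Lemma~\ref{lemma:effective-conv-to-binary} accumulates the jumps $\mathbf a \to \mathbf a^{(1)} \to \mathbf a^{(2)} \to \mathbf a^{(3)} \to \mathbf a^{(4)}$. The main obstacle — and the reason the bound is $\mathbf a^{(4)}$ rather than $\mathbf a^{(2)}$ — is precisely controlling this outer search over finite models of $G$ and certifying its convergence; the inner spectral estimate is the easy, already-established part.
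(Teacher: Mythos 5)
Your outer search over finite truncations and the inner characteristic-sequence spectral estimate (via Lemma~\ref{lem:dimkeralgo}) are both the right ingredients, but the middle layer is missing, and that is exactly where the extra jump comes from. The gap is that you treat $\ltb k(G)$ as
\[
  \lim_{N\to\infty}\lim_{p\to\infty} c\bigl(\Delta_k^{(N)},K\bigr)_p ,
\]
a double limit over truncations $N$ and the characteristic-sequence index $p$. But the $\vN G$-dimension of the homology of a finite truncation does \emph{not} converge to $\ltb k(G)$: a finite portion of a resolution can have spurious $k$-cycles that are only killed later, so $\dim_{\vN G} H_k(D_*^N)$ is neither monotone in $N$ nor guaranteed to have the right limit. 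What does converge is the $\sup$-$\inf$ of the dimensions of the \emph{images} of the stabilisation maps,
\[
  \ltb k (G) = \sup_{i \in \N} \inf_{j \in \N_{\geq i}}
  \dim_{\vN G} \bigl(\im(\varphi_{ij} \colon H_k(D_*^i) \to H_k(D_*^j))\bigr),
\]
which is the directed-colimit formula underlying L\"uck's Theorem~6.54 and, in the finite-dimensional case, the Nabutovsky--Weinberger argument. This is intrinsically a \emph{two}-index outer limit (in $i$ and $j$), and the characteristic-sequence approximation adds a third index $n$, giving a function $q\colon \N^3\to\Q$ and, via Lemma~\ref{lemma:algo-iterated-limits-2} with three indices, degree $\mathbf a^{(3+1)} = \mathbf a^{(4)}$. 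To make the inner estimate applicable one needs to express $\dim_{\vN G}\im H_k(i_*)$ as a signed sum of kernel dimensions (the paper's Lemma~\ref{lem:dimim}/Lemma~\ref{lem:dimimalgo}), which reduces it to three applications of Lemma~\ref{lem:dimkeralgo}; this rewriting is not optional, since Lemma~\ref{lem:dimkeralgo} only computes kernel dimensions, not image dimensions in homology.

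Your jump accounting is also off: feeding a genuinely double-indexed $q(N,p)$ through Lemma~\ref{lemma:algo-double-limits-to-limits}, then Lemma~\ref{lemma:algo-limits-to-effective-limits}/Lemma~\ref{lemma:algo:divergence}, then Lemma~\ref{lemma:effective-conv-to-binary} lands at $\mathbf a^{(3)}$, which is exactly Lemma~\ref{lemma:algo-iterated-limits-2} with two indices; you claim four jumps through three lemmas, which does not add up. The fourth jump appears precisely because the outer ``search over truncations'' is not a single limit but a $\sup$-$\inf$, and the two indices $i,j$ cannot be collapsed into one for free. Everything else in your sketch (building the resolution by recursively enumerating kernels with the word-problem oracle; detecting $+\infty$; extracting the binary expansion) is in line with the paper's proof.
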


Here, ``computing the binary expansion for~$\ltb k (G)$''  
means the following: if $\ltb k (G)$ is finite, then the
binary expansion of this real number is computed; otherwise, the
value~$+\infty$ is returned.

Similarly to Nabutovsky and Weinberger, we rewrite~$\ltb k (G)$
as
\begin{align}
  \ltb k (G)
= \sup_{i \in \N} \inf_{j \in \N_{\geq i}}
\dim_{\vN G} \bigl(
\im (\varphi_{ij} \colon M_i \to M_j)
\bigr),
\label{eq:nw}
\end{align}
where $((M_i)_{i \in \N}, (\varphi_{ij})_{i \in \N, j \in \N_{\geq i}})$
is a directed family of finitely generated $\vN G$-modules that
we can compute from~$S$ and the given algorithm for the word
problem of~$G$ with respect to~$S$. The modules~$M_i$ arise as
homology of finitely presented complexes over~$G$ with respect to~$S$.
We introduce the following terminology:

\begin{defi}[finitely presented complex]\label{def:finprescomplex}
  Let $S$ be a finite generating set of a group~$G$ and let $k \in
  \N$. We write $[k+1] \coloneqq  \{0,\dots, k+1\}$.  A \emph{finitely
    presented complex of length~$k$ over~$G$ with respect to~$S$} is a
  pair~$((n_p)_{p \in [k+1]}, (A_p)_{p \in [k+1]})$, consisting of
  natural numbers~$n_0, \dots, n_{k+1}$ and matrices~$A_p \in M_{n_p
    \times n_{p-1}}(\Z[S^*])$ for all~$p \in [k+1]$ with the property
  that for all~$p \in \{1,\dots,k+1\}$ we have (as matrices
  over~$\Z[G]$)
  \[ A_p \cdot A_{p-1} = 0.
  \]
\end{defi}

In particular, a finitely presented complex over~$G$
describes a partial $\Z[G]$-chain complex with based
free $\Z[G]$-chain modules of finite rank.

\begin{proof}[Proof of Theorem~\ref{thm:algoL2B}]
  We proceed in the following steps: We describe an algorithm
  that produces chain complexes whose homology groups give a description
  of~$\ltb k (G)$ as in Equation~\eqref{eq:nw}. We then go into
  the details of the computations of~$\dim_{\vN G} (\im \varphi_{ij})$
  and how the $\sup$-$\inf$ affects the overall degree of compatibility.

  Let $T$ be the given algorithm of degree at
  most~$\mathbf a$ solving the word problem for~$G$ with respect
  to~$S$. 
  From the given finite generating set~$S$, we 
  obtain the sum of the augmentation maps:
  \[ \xymatrix{%
    \Z
    & \bigoplus_S \Z[G] \ar[l]_-{\varepsilon}.
  }
  \]
  Inductively, we can algorithmically extend this to a resolution
  up to degree~$k+1$: 

  We can recursively enumerate~$\ker \varepsilon$ using the
  algorithm~$T$. More precisely, we can algorithmically (from~$S$
  and~$T$) compute an ascending sequence~$(S_i)_{i\in \N}$ of finite
  sets and maps~$(\pi_i \colon S_i \to \ker \varepsilon)_{i \in \N}$
  with~$\bigcup_{i \in \N} \pi_i(S_i) = \ker \varepsilon$. We then
  consider for each~$i \in \N$ the complex 
  \[ \xymatrix{%
    \Z
    & \bigoplus_S \Z[G] \ar[l]_-{\varepsilon}
    & \bigoplus_{S_i} \Z[G] \ar[l]_-{\partial_{1,i}}
  }
  \]
  where $\partial_{1,i} (x \cdot 1) = \pi_i(x)$
  for all~$x \in S_i$.
  Inductively over the degrees and the previously constructed
  finitary intermediate steps, this leads to a sequence~$(C^{i}_*)_{i \in \N}$
  of partial $\Z[G]$-chain complexes (up to degree~$k+1$)
  with the following properties:
  \begin{itemize}
  \item For each~$i \in \N$ and each~$p \in \{0,\dots,k+1\}$,
    the chain module~$C^i_p = \bigoplus_{X_{p,i}} \Z[G]$
    is free over a finite set~$X_{p,i}$. 
  
    We have~$X_{p,i} \subset X_{p,j}$ for all~$p \in \{0,\dots,k+1\}$,
    $i \in \N$, $j \in \N_{\geq i}$. 
  \item
    The union/colimit~$\bigcup_{i \in \N} C_*^i$ is a partial
    resolution of~$\Z$ over~$\Z[G]$ up to degree~$k+1$.
  \item
    For each~$p \in \{0,\dots,k+1\}$,
    the sequence~$(X_{p,i})_{i \in \N}$ is algorithmically
    enumerable from~$S$ and~$T$. 

    Moreover, the corresponding matrices of the boundary operators
    of~$C_*^i$ are also algorithmically computable from~$S$ and~$T$;
    similarly, for the matrices that describe the inclusions~$C_*^i
    \hookrightarrow C_*^j$ for all~$j \in \N_{\geq i}$.
  \end{itemize}
  In other words: we can algorithmically compute from~$S$ and~$T$ a
  corresponding sequence of finitely presented complexes of length~$k$
  over~$G$ with respect to~$S$ (in the sense of
  Definition~\ref{def:finprescomplex}).

  For~$i \in \N$, we set~$D_*^i \coloneqq  \vN G \otimes_{\Z G} C_*^i$.
  We switch from~$\ell^2 G$ to the
  \emph{group von Neumann algebra}~$\vN G$~\cite[Definition~2.23]{Kammeyer2019} so that we can use
  the full power of the algebraic version of the
  theory~\cite[Chapter~6]{lueck_l2}\cite[Chapter~4.2]{Kammeyer2019}. 
  Because $\vN G \otimes_{\Z G} \args$ and homology are
  compatible with directed colimits, we obtain
  \[ H_k(G; \vN G)
  \cong_{\vN G}
  \colim_{i \in \N} H_k(D_*^i).
  \]
  Hence, we have~\cite[proof of Theorem~6.54, Equation~(6.55)]{lueck_l2}
  \begin{align*}
    \ltb k (G)
  = \sup_{i \in \N}
  \inf_{j \in \N_{\geq i}}
  \dim_{\vN G} \bigl( \im(\varphi_{ij} \colon H_k(D_*^i) \to H_k(D_*^j)) \bigr),
  \end{align*}
  where $\varphi_{ij} \colon H_k(D_*^i) \to H_k(D_*^j)$ is
  the $\vN G$-map induced by the inclusion~$C_*^i \hookrightarrow C_*^j$.
  Given~$i$, the sequence~$(\dim \im (\varphi_{ij}))_{j \in \N_{\geq i}}$
  is decreasing. Moreover, the arising sequence in~$i$ is increasing.
  Thus, 
  \begin{align}
    \label{eq:supinfdim}
    \ltb k (G)
  = \lim_{i \to \infty}
  \lim_{\N_{\geq i} \ni j \to \infty}
  \dim_{\vN G} \bigl( \im(\varphi_{ij} \colon H_k(D_*^i) \to H_k(D_*^j)) \bigr).
  \end{align}

  As second step, we consider the terms~$\dim_{\vN G} ( \im
  \varphi_{ij})$: For all~$i,j \in \N$ with~$j \geq i$, the
  dimension~$\dim_{\vN G} (\im \varphi_{ij})$ can be algorithmically
  computed through Lemma~\ref{lem:dimimalgo} below from the finite
  presentations of the complexes constructed above.  More precisely,
  from~$S$ and~$T$, we can algorithmically compute a function~$q
  \colon \N^3 \to \Q$ of degree at most~$\mathbf a$  with the following properties:
  \begin{itemize}
  \item For all~$i,j \in \N$ with~$j < i$, we have~$q(i,j,n) = 0$.
  \item For all~$i,j \in \N$ with~$j \geq i$, the sequence~$(q(i,j,n))_{n \in \N}$
    converges to~$\dim_{\vN G} \im \varphi_{ij}$.
  \end{itemize}
  
  Finally, we resolve the double limit:  
  Note that in Lemma~\ref{lemma:algo-iterated-limits-2}, only
  the outer limit can converge to~$+\infty$. We therefore
  obtain from Lemma~\ref{lemma:algo-iterated-limits-2} 
  that we can algorithmically
  compute from~$S$ and~$T$ the binary expansion of
  \[
  	\ltb k (G) = \lim_{i \to \infty}
  				\lim_{ j \to \infty} 
  				\lim_{n\to\infty} q(i,j,n)
  \]
  through an algorithm of Turing degree~$\mathbf a ^{(3 + 1)}$.
\end{proof}

\begin{remark}
  The argument of Nabutovsky and Weinberger for ordinary Betti
  numbers of groups is based on an algorithmic construction of
  a sequence of finite complexes that approximate a classifying
  space of the group in question. In the same way, one could
  also prove Theorem~\ref{thm:algoL2B}; however, in our equivariant
  setting, the algebraic approach seemed easier to describe.
\end{remark}

It remains to prove Lemma~\ref{lem:dimimalgo}. We first
rewrite the dimension of the image as sums and differences
of dimensions of kernels.

\begin{lemma}[dimension of the image of a map in homology]\label{lem:dimim}
  Let $G$ be a group and let $(D_*, \partial_*^D)$ be an $\vN G$-chain complex consisting
  of free $\vN G$-modules of finite rank.  Let $i_* : C_* \to D_*$
  be the inclusion of a subcomplex.
  Then, for all $k\in \N$, we have
  \[
  \dim_{\vN G} \im H_k(i_*) = \dim_{\vN G}\ker \partial_k^C + \dim_{\vN G} \ker \partial_{k+1}^D
  - \dim_{\vN G} \ker (\partial_{k+1}^D + i_k)
  \]
  where $\partial_{k+1}^D + i_k$ denotes the map
  \begin{align*}
    \partial_{k+1}^D+ i_k : D_{k+1} \oplus C_k & \to D_k\\
    (x,y) &\mapsto \partial_{k+1}^D x + y.
  \end{align*}
\end{lemma}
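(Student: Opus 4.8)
The plan is to realise $\im H_k(i_*)$ via von Neumann dimension bookkeeping, using the additivity of $\dim_{\vN G}$ on exact sequences of finitely generated $\vN G$-modules together with the standard fact that for a chain complex of free $\vN G$-modules of finite rank one has $\dim_{\vN G} H_k(D_*) = \dim_{\vN G}\ker\partial_k^D - \dim_{\vN G}\im\partial_{k+1}^D$, and $\dim_{\vN G}\im\partial_{k+1}^D = \operatorname{rk} D_{k+1} - \dim_{\vN G}\ker\partial_{k+1}^D$ (the rank--nullity identity for $\vN G$-dimension, valid since $\dim_{\vN G}$ is additive and $D_{k+1}$ is free of finite rank). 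First I would express the image of $H_k(i_*)$ as a quotient: $\im H_k(i_*) \cong Z_k^C/(Z_k^C \cap \partial_{k+1}^D D_{k+1})$, where $Z_k^C = \ker\partial_k^C$ (note $Z_k^C \subseteq Z_k^D$ since $i_*$ is a chain map into a subcomplex, so $\partial_{k+1}^D D_{k+1} \cap Z_k^C$ makes sense inside $D_k$). By additivity of $\dim_{\vN G}$,
\[
\dim_{\vN G}\im H_k(i_*) = \dim_{\vN G} Z_k^C - \dim_{\vN G}\bigl(Z_k^C \cap \partial_{k+1}^D D_{k+1}\bigr).
\]

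Next I would identify the subtracted term with a kernel dimension. Consider the map $\Phi := \partial_{k+1}^D + i_k \colon D_{k+1}\oplus C_k \to D_k$. Its image is $\partial_{k+1}^D D_{k+1} + C_k$ (as submodules of $D_k$), so by rank--nullity for $\vN G$-dimension applied to $\Phi$,
\[
\dim_{\vN G}\ker\Phi = \operatorname{rk} D_{k+1} + \operatorname{rk} C_k - \dim_{\vN G}\bigl(\partial_{k+1}^D D_{k+1} + C_k\bigr).
\]
Now I compute $\dim_{\vN G}(\partial_{k+1}^D D_{k+1} + C_k)$ via the formula $\dim_{\vN G}(U+V) = \dim_{\vN G}U + \dim_{\vN G}V - \dim_{\vN G}(U\cap V)$ with $U = \partial_{k+1}^D D_{k+1}$, $V = C_k$. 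Since $C_k \subseteq D_k$ and $\partial_k^D i_k = i_{k-1}\partial_k^C$, the intersection $U \cap C_k$ lands in $\ker\partial_k^D \cap C_k = \ker\partial_k^C = Z_k^C$; conversely $Z_k^C \cap \partial_{k+1}^D D_{k+1} \subseteq U\cap C_k$, so in fact $U\cap C_k = Z_k^C \cap \partial_{k+1}^D D_{k+1}$ — which is exactly the term appearing in the expression for $\dim_{\vN G}\im H_k(i_*)$ above. Substituting $\dim_{\vN G}\partial_{k+1}^D D_{k+1} = \operatorname{rk}D_{k+1} - \dim_{\vN G}\ker\partial_{k+1}^D$ and $\dim_{\vN G}C_k = \operatorname{rk}C_k$ and solving, all the rank terms cancel and one is left with
\[
\dim_{\vN G}\im H_k(i_*) = \dim_{\vN G}Z_k^C + \dim_{\vN G}\ker\partial_{k+1}^D - \dim_{\vN G}\ker\Phi,
\]
which is the claim, since $Z_k^C = \ker\partial_k^C$.

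The main obstacle is bookkeeping rather than anything deep: one must be careful that every submodule appearing (images and kernels of $\vN G$-maps between finitely generated $\vN G$-modules, and their sums and intersections) is again finitely generated so that $\dim_{\vN G}$ is defined and additive on the relevant short exact sequences — this is standard for $\vN G$-modules that are subquotients of finite-rank free modules, using cofinality/continuity of $\dim_{\vN G}$ (e.g.\ \cite[Chapter~6]{lueck_l2}). The one point requiring genuine care is the identification $U\cap C_k = Z_k^C \cap \partial_{k+1}^D D_{k+1}$: the inclusion "$\supseteq$" is immediate, and "$\subseteq$" uses that any element of $C_k$ that also lies in $\partial_{k+1}^D D_{k+1}$ is automatically a cycle (it is killed by $\partial_k^D$, hence by $\partial_k^C$). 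Once this is in place, the computation is purely formal cancellation of ranks. An alternative, essentially equivalent route is to feed the short exact sequence $0 \to C_* \to D_* \to D_*/C_* \to 0$ into the long exact homology sequence and the additivity of $\dim_{\vN G}$ along it; I would mention this but carry out the direct submodule argument above, as it keeps all terms manifestly in the "dimension of kernel of an explicit matrix" form needed for the algorithmic application in Lemma~\ref{lem:dimimalgo}.
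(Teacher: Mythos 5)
Your proof is correct and uses essentially the same approach as the paper's: realise $\im H_k(i_*)$ as $\ker\partial_k^C/(\im\partial_{k+1}^D\cap C_k)$, then combine additivity of $\dim_{\vN G}$, rank--nullity, and inclusion--exclusion for $\dim(U+V)$, with the identification $\im\partial_{k+1}^D\cap C_k=\ker\partial_k^C\cap\im\partial_{k+1}^D$ as the one non-formal step. You only reorganise the algebra (solving for $\dim\ker(\partial_{k+1}^D+i_k)$ first rather than expanding $\dim(\im\partial_{k+1}^D\cap C_k)$), and you make explicit the inclusion check that the paper leaves implicit; there is no substantive difference.
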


\begin{proof}
  The von Neumann dimension is additive for short exact
  sequences~\cite[Theorem~4.7(ii)]{Kammeyer2019}. In particular, we
  can compute the von Neumann dimension of quotients and for any
  $\vN G$-linear map, the dimensions of its kernel and its image
  add up to the dimension of its domain.  We have
  \[\im H_k(i_*) \cong (\ker \partial_k^C) / (\im \partial_{k+1}^D \cap C_k).\]
  Using additivity, we obtain
  \allowdisplaybreaks	
  \begin{align*}
    \dim_{\vN G} \im H_k(i_k)
    =\;& \dim_{\vN G}\ker \partial_k^C - \dim (\im \partial_{k+1}^D \cap C_k) \\
    =\;& \dim_{\vN G}\ker \partial_k^C - \dim_{\vN G} \im \partial_{k+1}^D - \dim_{\vN G} C_k + \dim_{\vN G} \im (\partial_{k+1}^D+ i_k)\\
    =\;& \dim_{\vN G}\ker \partial_k^C - (\dim_{\vN G} D_{k+1} - \dim_{\vN G} \ker \partial_{k+1}^D) \\&
    - \dim_{\vN G} C_k + \bigl(\dim_{\vN G} (D_{k+1}\oplus C_k) - \dim_{\vN G} \ker (\partial_{k+1}^D+ i_k)\bigr)\\
    =\;& \dim_{\vN G}\ker \partial_k^C + \dim_{\vN G} \ker \partial_{k+1}^D
    - \dim_{\vN G} \ker (\partial_{k+1}^D+ i_k),
  \end{align*}
  as claimed.
\end{proof}

\begin{lemma}[dimension of the image of a map in homology, algorithmically]\label{lem:dimimalgo}
  Let $\mathbf a$ be a Turing degree.
  There is an algorithm of Turing degree at most~$\mathbf a$
  that
  \begin{itemize}
  \item given a finitely generated group~$G$ and a finite
    generating set~$S$ of~$G$, together with an algorithm of degree at most~$\mathbf a$
    solving the word problem for~$G$ with respect to~$S$,
    given $k \in \N$,
    and finitely presented complexes~$(m_*,A_*)$ and $(n_* , B_*)$ of
    length~$k$ over~$G$ with respect to~$S$ with $m_j \leq n_j$ and $B_j|_{m_j
      \times m_{j-1}} = A_j$ for all~$j \in [k]$,
  \item computes a sequence~$\N \to \Q$ that converges
    to~$\dim_{\vN G} H_k(i_*)$,
    where $i_*$ denotes the inclusion between the two complexes.
  \end{itemize}
\end{lemma}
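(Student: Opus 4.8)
The plan is to reduce the computation of $\dim_{\vN G} H_k(i_*)$ to a bounded number of kernel-dimension computations, each of which is handled by the algorithm of Lemma~\ref{lem:dimkeralgo}, and then combine the resulting sequences. Concretely, by Lemma~\ref{lem:dimim} we have
\[
\dim_{\vN G} \im H_k(i_*) = \dim_{\vN G}\ker \partial_k^C + \dim_{\vN G} \ker \partial_{k+1}^D
- \dim_{\vN G} \ker (\partial_{k+1}^D + i_k),
\]
where $C_*$ and $D_*$ are the $\vN G$-chain complexes obtained by base change from the two finitely presented complexes $(m_*,A_*)$ and $(n_*,B_*)$. The boundary matrices $\partial_k^C$, $\partial_{k+1}^D$ are just $A_k$, $B_{k+1}$ read as matrices over $\Z[S^*]$, and the block matrix representing $\partial_{k+1}^D + i_k$ can be assembled mechanically from $B_{k+1}$ and the inclusion $C_k \hookrightarrow D_k$ (which, by the hypothesis $B_j|_{m_j\times m_{j-1}} = A_j$, is simply the coordinate inclusion $\Z[G]^{m_k}\hookrightarrow \Z[G]^{n_k}$, so $i_k$ is a fixed $0$/$1$ matrix). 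All three of these are matrices over $\Z[S^*]$ that can be written down from the given data by pure bookkeeping, with no appeal to the word-problem oracle.

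Next I would invoke Lemma~\ref{lem:dimkeralgo} three times, once for each of the three matrices above, passing along the same generating set $S$ and the same degree-$\mathbf a$ algorithm $T$ for the word problem. Each application yields a monotone decreasing $\mathbf a$-computable sequence $\N\to\Q$ converging to the corresponding $\dim_{\vN G}\ker(\args)$; note that $\dim_{\vN G}\ker(\ltmul{M})$ is exactly the quantity computed there, since a matrix $M\in M_{p\times q}(\Z G)$ has $\dim_{\vN G}\ker(\ltmul M) = \ltm M G$, and base change along $\Z G \hookrightarrow \vN G$ does not change the kernel dimension. Form the sequence $(s_n)_{n\in\N}$ obtained by taking, at step $n$, the value (first sequence at $n$) $+$ (second sequence at $n$) $-$ (third sequence at $n$). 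Since each of the three input sequences is of degree at most $\mathbf a$ and converges, $(s_n)$ is of degree at most $\mathbf a$ and converges to the claimed limit $\dim_{\vN G} H_k(i_*) = \dim_{\vN G}\im H_k(i_*)$ by Lemma~\ref{lem:dimim}. This is the output sequence; the overall algorithm stays at Turing degree $\mathbf a$ because all the post-processing (assembling matrices, adding and subtracting rationals) is purely computable.

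The only genuine point requiring care — and the step I expect to be the mild obstacle — is making sure the block matrix $\partial_{k+1}^D + i_k \colon D_{k+1}\oplus C_k \to D_k$ really is presented over $\Z[S^*]$ in a form matching the hypotheses: one must spell out that $D_k = \Z[G]^{n_k}$, $C_k = \Z[G]^{m_k}$, the inclusion $i_k$ is the standard embedding of the first $m_k$ coordinates, and hence the combined map is represented by the $(n_{k+1}+m_k)\times n_k$ matrix obtained by stacking $B_{k+1}$ on top of the $m_k\times n_k$ matrix $[\,I_{m_k}\ \,0\,]$. Once this identification is in place, everything else is routine: the correctness of the limit is Lemma~\ref{lem:dimim}, the computability of each summand is Lemma~\ref{lem:dimkeralgo}, and closure of degree-$\mathbf a$ computable sequences under termwise $\Q$-linear combination is immediate. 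One should also remark that the lemma only asserts convergence of the output sequence, not effective convergence, so no control on the rate is needed — which is why three separate invocations of Lemma~\ref{lem:dimkeralgo}, with no synchronization of their error terms, suffice.
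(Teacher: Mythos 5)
Your proof matches the paper's argument exactly: both apply Lemma~\ref{lem:dimim} to reduce $\dim_{\vN G} H_k(i_*)$ to the three quantities $\ltm{A_k}{G}$, $\ltm{B_{k+1}}{G}$, and $\ltm{E_k}{G}$ (with $E_k$ the matrix stacking $B_{k+1}$ on top of $(I_{m_k}\mid 0)$), and then invoke Lemma~\ref{lem:dimkeralgo} three times and combine the resulting sequences termwise. Your additional remarks --- that $i_k$ is a fixed $0/1$ coordinate-inclusion matrix, that only convergence (not effective convergence) is needed so no rate control is required, and that the $\Z G$-to-$\vN G$ base change preserves kernel dimensions --- are all correct and make the bookkeeping explicit that the paper leaves implicit.
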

\begin{proof}
  By Lemma~\ref{lem:dimim}, we have 
  \begin{align*}
    \dim_{\vN G} H_k(i_*)
    &
  = \dim_{\vN G} \ker \ltmul{A_k} 
  + \dim_{\vN G} \ker \ltmul{B_{k+1}}
  - \dim_{\vN G} \ker \ltmul{E_k}
  \\
  & = \ltm {A_k}G + \ltm {B_{k+1}} G - \ltm{E_k} G, 
  \end{align*}
  where $E_k$ denotes the matrix obtained by stacking~$B_{k+1}$ on top
  of~$(I_{m_k} \mid 0)$. We can thus apply the algorithm from Lemma~\ref{lem:dimkeralgo}
  to the three matrices~$A_k$, $B_{k+1}$, $E_{k}$ (which are computable from
  the input) to compute this dimension.
\end{proof}

\appendix
\section{Sets of mapping degrees}\label{appx:deg}

Neofytidis, Shicheng Wang, and Zhongzi Wang~\cite{neofytidiswangwang}
study the question of which subsets of~$\Z$ (containing~$0$) are
realisable as sets of mapping degrees between oriented closed
connected manifolds. Comparing cardinalities shows that most subsets
of~$\Z$ are \emph{not} realisable in this way. Using a computability
argument, we can give ``explicit'' examples of non-realisable sets.

\begin{prop}\label{prop:degrecenum}
  Let $M$ and $N$ be oriented closed connected manifolds of the
  same dimension. Then the set
  \[ \deg(M,N)
  \coloneqq  \bigl\{ \deg f \bigm| f \in \map(M,N) \bigr\}
  \subset \Z
  \]
  of mapping degrees is recursively enumerable.
\end{prop}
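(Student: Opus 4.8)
The plan is to reduce the computation of mapping degrees to a purely combinatorial enumeration by means of the simplicial approximation theorem. First I would fix finite triangulations of $M$ and $N$; for smooth or PL manifolds these exist by Whitehead's theorem, and in general one may replace $M$ and $N$ by homotopy equivalent finite simplicial complexes carrying the same (oriented) fundamental classes, since the set $\deg(M,N)$ depends only on the oriented homotopy types — a degree-$d$ map is carried by orientation-preserving homotopy equivalences to a degree-$d$ map, as the fundamental classes correspond. By the simplicial approximation theorem, every continuous map $f\colon M\to N$ is homotopic to a simplicial map $g\colon M^{(r)}\to N$ from some $r$-fold barycentric subdivision $M^{(r)}$ of the triangulation of $M$. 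Since homotopic maps between oriented closed connected manifolds of the same dimension have equal mapping degree, we obtain
\[
\deg(M,N) = \bigcup_{r\in\N} \bigl\{\deg(g) \bigm| g\colon M^{(r)}\to N \text{ simplicial}\bigr\}.
\]

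Next I would check that each ingredient on the right-hand side is effective. The subdivisions $M^{(r)}$ are computable from the triangulation of $M$. For fixed $r$, a simplicial map $M^{(r)}\to N$ is determined by its effect on the (finite) vertex set, subject to the finitely many conditions that the vertices of each simplex span a simplex of $N$; hence the set of such maps is finite and can be algorithmically enumerated. Finally, the degree of a simplicial map $g$ is computable: one forms the induced chain map on the finitely generated free simplicial chain complexes with $\Z$-coefficients, computes $H_n(M^{(r)};\Z)\cong\Z$ and $H_n(N;\Z)\cong\Z$ together with generating cycles representing the fundamental classes, and reads off the unique integer $d$ with $g_*[M]=d\cdot[N]$ — a finite linear-algebra computation over $\Z$.

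The recursive enumeration then dovetails over $r\in\N$ and, for each $r$, over all simplicial maps $M^{(r)}\to N$, outputting the computed degree in each case; by the displayed equality this enumerates exactly $\deg(M,N)$. The one point genuinely requiring care is the first step: I expect the main (mild) obstacle to be justifying that one may work with finite simplicial complexes at all, which in the topological category means invoking the fact that a closed topological manifold has the homotopy type of a finite polyhedron and transporting the fundamental class along this equivalence, rather than using a genuine triangulation; everything after that is routine.
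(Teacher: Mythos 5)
Your proposal matches the paper's proof essentially step for step: replace $M$ and $N$ by homotopy equivalent finite simplicial complexes (in the topological category this is the Siebenmann/Kirby--Siebenmann homotopy-finiteness result, which the paper cites, not a triangulation), then use simplicial approximation over iterated barycentric subdivisions to reduce to enumerating the finite sets of simplicial maps and computing their degrees via simplicial homology, dovetailing over the subdivision level. You also correctly identify the one non-routine ingredient — transporting the fundamental class along a homotopy equivalence to a finite polyhedron rather than triangulating — which is exactly where the paper invokes its references.
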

\begin{proof}
  Let $n \coloneqq  \dim M = \dim N$. It is well-known that $M$ and $N$ are
  homotopy equivalent to finite simplicial
  complexes~\cite{siebenmann,kirbysiebenmann}.  Let $M \to |X|$ and $N
  \to |Y|$ be such homotopy equivalences and let $[X] \in
  H_n(|X|;\Z)$, $[Y] \in H_n(|Y|;\Z)$ be the corresponding images of
  the fundamental classes.  As mapping degrees can be described in
  terms of the effect on~$H_n(\args;\Z)$, we obtain that $\deg(M,N)$
  coincides with the set
  \[ \deg(X,Y)
  \coloneqq  \bigl\{ d \in \Z \bigm|
  \exi{f \in \map(|X|,|Y|)} H_n(f;\Z)[X] = d \cdot [Y]
  \bigr\}.
  \]
  Therefore, it suffices to show that $\deg(X,Y)$ is recursively
  enumerable.

  We use simplicial approximation to show that $\deg(X,Y)$ indeed
  is recursively enumerable: 
  for~$k \in \N$, let $X(k)$ the $k$-th iterated barycentric
  subdivision of $X$. In view of the simplicial approximation theorem,
  we have
  \[ \deg(X,Y) = \bigcup_{k \in \N} \deg^s\bigl(X(k), Y\bigr),
  \]
  where
  \[ \deg^s\bigl(X(k), Y\bigr)
  \coloneqq  \bigl\{ d \in \Z \bigm|
  \exi{f \in \map^s(X(k),Y)} H_n(|f|;\Z)[X] = d \cdot [Y]
  \bigr\}
  \]
  is the set of mapping degrees of \emph{simplicial} maps on 
  these subdivisions. 
  Recursive enumerability of~$\deg(X,Y)$ can thus be established as
  follows:
  \begin{itemize}
  \item For every~$k \in \N$, the finite simplicial
    complex~$X(k)$ can be algorithmically computed
    from~$X$.
  \item For every~$k \in \N$, the set~$\map^s(X(k),Y)$
    of all simplicial maps~$X(k) \to Y$ is recursive.
  \item
    For every~$k \in \N$ and each simplicial map~$f \colon X(k)
    \to Y$, we can algorithmically compute the unique~$d \in \Z$
    with $H_n(|f|;\Z)[X] = d \cdot [Y]$ via simplicial homology.
    \qedhere 
  \end{itemize}
\end{proof}

\begin{example}
  Let $A \subset \Z$ be a set that is \emph{not} recursively
  enumerable (e.g., the complement of a halting problem set).
  Then, $A_0 \coloneqq  A \cup \{0\}$ contains $0$, but $A_0$ is \emph{not}
  recursively enumerable and thus cannot be realised as a set
  of mapping degrees between oriented closed connected manifolds
  (Proposition~\ref{prop:degrecenum}).
\end{example}

\section{Torsion-free solvable groups}
\label{appx:torsfree-solv}

The goal of this appendix is to prove the following proposition.

\begin{prop}[]
	\label{prop:uncount-torsfree-solv}
	There exist uncountably many isomorphism types of $2$-generated, torsion-free, solvable
	groups.
\end{prop}

Because there are only countably many algorithms that could solve the word
problem and thus only countably many groups (up to isomorphism) with solvable word problem, we obtain:

\begin{cor}
	\label{cor:torsfree-solv}
	There exist finitely generated, torsion-free, solvable
	groups with unsolvable word problem.
\end{cor}

In order to prove Proposition~\ref{prop:uncount-torsfree-solv}, we start with 
the following class of groups. 

\begin{defi}[torsion-free abelian groups of rank one]
	Let $P$ be a set of prime numbers. We define the following (additive) 
	subgroup of~$\Q$: 
	\[
		\ZPinv \coloneqq \Big\{ \frac{a}{p_1\cdots p_n} \mid a\in \Z, n\in \N, p_1, \dots, p_n \in P\Big\} 
	\]
\end{defi}

However, unless $P$ is empty, $\ZPinv$ will not be finitely generated. We thus use the
following version of the embedding theorem by Neumann--Neumann.

\begin{thm}[{{\cite[Corollary~5.2 and Construction in Section~4]{NeumannNeumann-Embedding}}}]
	\label{thm:NeumannNeumann-embedding}
	Let $G$ be a countable, solvable group. Then, $G$ can be embedded in a solvable $2$-generator
	group~$H(G)$ that can be embedded in the group
	\[
		Q(G) \coloneqq (G\Wr \Z) \Wr \Z.
	\]
	Here, $G\Wr \Z \coloneqq (G^\Z) \rtimes \Z$ denotes the \emph{(unrestricted) wreath product}.
\end{thm}

Ultimately, we want to distinguish the isomorphism types of $H(\ZPinv)$. For this purpose,
we introduce the following invariant, which is an alteration of the notion of \emph{type}
for rank-one torsion-free abelian groups \cite[Chapter~VII]{Griffith-Infiniteabelian}.

\begin{defi}[]
	\label{def:tau-invariant}
	Let $G$ be a group. We define the following set of prime numbers associated to~$G$: 
	\[
		\tau(G) \coloneqq \big\{p \text{ prime} \mid \exists_{x\in G\backslash \{e\}}\; \forall_{k\in \N}\; \exists_{y\in G} \;\; y^{(p^k)} = x\big\}.
	\]
\end{defi}

This invariant has the following properties:

\begin{remark}[basic properties of $\tau(G)$]
	\label{rem:basic-prop-tau}
	Let $G$ be a group.
	
	\begin{enumerate}
		\item \label{rem:basic-prop-tau-subgroup}
		If $G$ is a subgroup of~$H$, then $\tau(G) \subseteq \tau(H)$.		
		\item \label{rem:basic-prop-tau-ZP}
		Let $P$ be a set of primes. Then, $\tau(\ZPinv) = P$. 
		\item \label{rem:basic-prop-tau-Wr}
		We have $\tau(G \Wr \Z) = \tau(G^\Z) = \tau (G)$. This can be proved as 
		follows: The inclusions `$\supseteq$' are clear by the first item. On the other hand,
		if $p\in \tau(G\Wr\Z)$, we have elements $x \in G\Wr \Z$ and
		$y(k) \in G\Wr\Z$ for all~$k\in \N$ as in Definition~\ref{def:tau-invariant}. Then, $x$ and thus all $y(k)$ 
		must lie in the kernel of $G \Wr \Z \twoheadrightarrow \Z$, thus in 
		$G^\Z$. We can then project $x$ and $y(k)$ to a non-trivial component of $x$
		to produce witnesses of the fact that $p\in \tau(G)$.
	\end{enumerate}
\end{remark}

\begin{proof}[Proof of Proposition~\ref{prop:uncount-torsfree-solv}]
	We consider the groups $H(\ZPinv)$ for all subsets $P$ of the prime numbers,
	where $H(\cdot)$ is taken as in Theorem~\ref{thm:NeumannNeumann-embedding}.
	These groups are $2$-generated and solvable. Moreover, because $\ZPinv$, hence
	$Q(\ZPinv) = (\ZPinv\Wr \Z) \Wr \Z$ is torsion-free, so is $H(\ZPinv)$.
	
	It remains to show that different sets of primes lead to non-isomorphic groups~$H(\ZPinv)$.
	By Remark~\ref{rem:basic-prop-tau}, we have
	\begin{align*}
		\tau (H(\ZPinv)) &\subseteq \tau (Q(\ZPinv)) & (\text{Remark~\ref{rem:basic-prop-tau}.\ref{rem:basic-prop-tau-subgroup}}) \\
		&= \tau ((\ZPinv\Wr \Z) \Wr \Z)\\
		&= \tau (\ZPinv\Wr \Z) & (\text{Remark~\ref{rem:basic-prop-tau}.\ref{rem:basic-prop-tau-Wr}})\\
		&= \tau (\ZPinv) & (\text{Remark~\ref{rem:basic-prop-tau}.\ref{rem:basic-prop-tau-Wr}})\\
		&= P &(\text{Remark~\ref{rem:basic-prop-tau}.\ref{rem:basic-prop-tau-ZP}}).
	\end{align*}
	On the other hand, $\ZPinv$ embeds into $H(\ZPinv)$, hence
	$P = \tau(\ZPinv) \subseteq \tau(H(\ZPinv))$. 
	Thus, we have $\tau(H(\ZPinv)) = P$, allowing us to recover $P$ from 
	$H(\ZPinv)$.
\end{proof}

{\small
  \bibliographystyle{alpha}
  \bibliography{bib_l2_comp}}

\vfill

\noindent
\emph{Clara L\"oh,\\ Matthias Uschold}\\[.5em]
  {\small
  \begin{tabular}{@{\qquad}l}
    Fakult\"at f\"ur Mathematik,
    Universit\"at Regensburg,
    93040 Regensburg\\
    \textsf{clara.loeh@mathematik.uni-r.de}, 
    \textsf{http://www.mathematik.uni-r.de/loeh}
    \\
    \textsf{matthias.uschold@mathematik.uni-r.de},
    \textsf{https://homepages.uni-regensburg.de/$\sim$usm34387/}
  \end{tabular}}

\end{document}